\numberwithin{equation}{section}
\newtheorem*{theorem}{Theorem}
\newtheorem{proposition}{Proposition}[section]
\theoremstyle{definition}
\newtheorem{corollary}[proposition]{Corollary}
\newtheorem{lemma}[proposition]{Lemma}
\newtheorem{definition}{Definition}
\theoremstyle{remark}
\newtheorem{remark}{Remark}
\newcommand{\be}{\begin{equation}}
\newcommand{\ee}{\end{equation}}
\newcommand{\N}{\mathbb{N}}
\newcommand{\Z}{\mathbb{Z}}
\newcommand{\mo}{\mathcal{O}}
\newcommand{\bm}{\bar{\mathcal{M}}}
\begin{document}

\author[Stefano Luzzatto]{Stefano Luzzatto}
\address[Stefano Luzzatto]{International Center for Theoretical
Physics (ICTP)\\
Strada Costiera 11, Trieste, Italy}
\email[Stefano Luzzatto]{luzzatto@ictp.it}

\author[Marks Ruziboev]{Marks Ruziboev }
\address[Marks Ruziboev]{International School for Advanced Studies (SISSA)\\
Via Bonomea 265, Trieste, Italy
 \\International Center for Theoretical
Physics (ICTP)\\
Strada Costiera 11, Trieste, Italy}
\email[Marks Ruziboev]{mruziboe@sissa.it}

\title[Young towers for product systems]{Young towers for product systems}

\date{28 April 2015}

\subjclass[2000]{37A05, 37A25}

\keywords{Decay of correlations, product systems, Young Towers.}

\thanks{We would like to thank Viviane Baladi and Carlangelo Liverani
for several useful discussions and their help on some parts of the paper, and a referee for their comments which helped us significantly improve the presentation of the paper.}

\begin{abstract}
We show that the direct product of maps with Young towers admits a Young tower whose return times decay at a rate which is bounded above by the slowest of the rates of decay of the return times of the component maps.
An application of this result, together with other results in the literature, yields various statistical properties for the direct product of various classes of systems, including Lorenz-like maps, multimodal
maps, piecewise $C^2$ interval maps with critical points and
singularities, H\'enon maps and partially hyperbolic systems.
\end{abstract}

\maketitle

\section{Introduction and statement of results}

\subsection{Product systems and Young towers}
Let \( f_i:M_i\to M_i \),  $i=1, ..., \ell,$ be a family of maps defined on a family of Riemannian manifolds. Define the product map \(  f=f_1\times ...
\times f_\ell \) on
\(
 {M}~{=M_1\times ... \times M_\ell}
 \) by
\begin{equation}\label{eq:prod}
f(x_1,...,x_\ell)=(f_1(x_1),..,f_\ell(x_\ell)).
\end{equation}
Dynamical properties of the product system \( f \) can be partially
but not completely deduced from the dynamical properties of its
components, as the product system may exhibit significantly richer
dynamics. Even in  the simplest setting of the product of two
identical maps \( g\times g: M \to M \), if \( p, q \) are periodic
points for \( g \), then the set-theoretic product of the periodic
\emph{orbits} \( \mathcal O^{+}(p)=\{p_{1},...,p_{m}\} \) and
\(\mathcal O^{+}(q)=\{q_{1},...,q_{n}\} \) may consist of several
periodic orbits\footnote{ In fact, if  gcd\(\{m, n\}=1\), then $(p,
q)$ is periodic point of period $mn.$ On the other hand, if gcd$\{m
,n\}=k>1,$ then $(p, q)$ is periodic point with period $mn/k,$ but
$\mo^+(p)\times\mo^+(q)$ is a union of $k$ periodic orbits. As a
simple example, consider the case $m=2$ and $n=4.$ In this case, the
product of the orbits splits into two orbits for the product map:
$\mo^+(p)\times\mo^+(q)=\mo^+(p_1\times q_1)\cup\mo^+(p_2\times
q_1).$ }, which are in some sense \emph{new} periodic orbits which
do not exist in either of the original systems. Similarly, from an
ergodic-theoretic point of view, the space \( \mathcal M_{g\times g}
\) of invariant probability measures for the product map contains
many invariant measures which are not products of invariant measures
for \( g \) such as  measures supported on the ``new'' periodic
orbits mentioned above
and measures supported on the ``diagonal'' \(
\{(x,x): x\in M\}\subset M \times M  \) which  is invariant for the  product.\footnote{Thanks to M. Blank for this interesting
observation.}

One important tool that has emerged in the last decade or so for the
study of dynamical and statistical properties of various kinds of
dynamical systems, is that of \emph{Young towers}, after the
pioneering work of Young \cite{Y}. The direct product of Young
towers is not in general itself a Young tower, and so it is not
immediately obvious that the direct product of systems which admit a
Young tower also admits a Young tower. The main result of this paper
is to show that, nevertheless, a Young tower can be constructed  for
the product system \eqref{eq:prod} if each component \(  f_{i}  \)
admits a Young tower, and that  we can obtain some estimates for the
decay of the return times of the tower for the product in terms of
the rates of decay for the individual towers. We will also discuss
various applications of this result to products of systems for which
Young towers are known to exist.

\subsection{Statement of results}
We postpone the precise formal definition of a Young tower to
Section \ref{sec:YT} since it is somewhat technical and our main
results can be stated without any reference to the details. The
definition is slightly  different depending on whether \( f \) is
invertible or not  but both cases we have a reference probability
measure \( m \) on a "base" \( \Delta_{0}\subset M \), and a return
time function \( R: \Delta_{0}\to \mathbb N  \). We suppose that each
of our maps \( f_i: M_i\to M_i \), \( i=1,...,\ell \) admit Young
towers with reference measures \( m^{(i)}_{0}\) defined on bases \( \Delta_{0}^{(i)} \)
with return time functions \( R^{(i)}: \Delta^{(i)}_{0}\to \mathbb N \). Let \(
f: M \to M \) denote the corresponding product system
\eqref{eq:prod}, and $\bar{m}_{0}=m^{(1)}_{0}\times \cdots \times m^{(\ell)}_{0}$ denote the
product measure on \( \bar\Delta_{0}=\Delta^{(1)}_{0} \times \cdots \times
\Delta^{(\ell)}_{0}\). Finally, let
\begin{equation}\label{M}
\mathcal{M}_n=\max_{i=1,.., \ell}m^{(i)}_{0}\{R^{(i)}>n\}.
\end{equation}
We  always assume that either all of the maps are invertible
or all are noninvertible so that the product system is of the same type.

\begin{theorem}\label{suytower}
The product system $f$ admits a tower
with reference measure \( \bar m_{0} \) on  $\bar\Delta_{0}$ and  return time \( T: \Delta_{0}\to \mathbb N \) satisfying the following bounds.

\medskip
\noindent
\textbf{Exponential Decay:}
if
$\mathcal M_n=\mathcal{O}(e^{-\tau n})$ for some   $\tau>0$, then
\[
\bar m_{0}\{T>n\}=\mathcal{O}(e^{-\tau'n}).
\]
for some \( \tau'>0 \).

\medskip
\noindent
\textbf{Stretched Exponential Decay}
 if $\mathcal M_n=\mathcal{O}(e^{-\tau n^\theta})$ for some  $\tau,\theta>0$, then
\[
\bar m_{0}\{T>n\}=\mathcal{O}(e^{-\tau'n^{\theta'}});
\]
for all \( 0<\theta'<\theta \)  and some  \( \tau'=\tau'(\theta')>0 \).

 \medskip
\noindent
\textbf{Polynomial Decay}
 if $\mathcal M_n=\mathcal{O}(n^{-\alpha})$ for some $\alpha>\ell$, then
 \[
\bar m_{0}\{T>n\}=\mathcal{O}(n^{\ell-\alpha}).
 \]
\end{theorem}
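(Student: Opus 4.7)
The plan is to build the product tower from the \emph{simultaneous returns} of all $\ell$ components to their respective bases, and then to reduce the tail estimate of the resulting return time to a renewal-theoretic question about $\ell$ independent renewal processes with inter-arrival tails controlled by $\mathcal{M}_n$.

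For the construction I would set
\[
T(\bar x)=\min\{\,n\ge 1:\text{ for every }i,\ n=\tau^{(i)}_{k_i}(x_i)\text{ for some }k_i\,\},
\]
where $\tau^{(i)}_k$ denotes the $k$-th return time of $f_i$ to $\Delta_{0}^{(i)}$ provided by the $i$-th tower. The base $\bar\Delta_{0}$ partitions into cells on which $T$ and the full sequence of component return-partition elements are constant; on each cell $f^{T}=f_1^{T}\times\cdots\times f_\ell^{T}$ acts as a product of composed individual returns, so all Young tower axioms required in Section~\ref{sec:YT} (Markov image, bounded distortion, uniform expansion along unstables, and in the invertible case contraction along stables) transfer immediately from the components, with $\bar{m}_{0}$ automatically the reference measure.

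For the tails, independence of the $\ell$ renewal sequences $(\tau^{(i)}_k)_k$ under $\bar{m}_{0}$ reduces $\bar{m}_{0}\{T>n\}$ to the probability that $\ell$ independent renewal processes share no common point in $[1,n]$. In the exponential regime, a large-deviations argument shows that each process has a linear number of renewals in $[1,n]$ outside an exponentially small set, after which one obtains the stated exponential coincidence bound. The stretched exponential case follows by the same scheme with the standard sub-exponential trade-off, which explains the loss from $\theta$ to an arbitrary $\theta'<\theta$.

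The polynomial case is the main obstacle, since concentration fails for heavy tails and a single renewal process may have a wildly heavy-tailed number of renewals in $[1,n]$. My approach would be to condition on the full renewal sequence of the first component: given points $\tau^{(1)}_1,\ldots,\tau^{(1)}_K\in[1,n]$, the event $\{T>n\}$ forces each of the remaining $\ell-1$ independent renewal processes to miss all $K$ of these points, and the cost of a near-miss close to any $\tau^{(1)}_k$ is controlled by the polynomial tail $\mathcal{O}(n^{-\alpha})$. Independence multiplies these bounds across $i\ge 2$, and summation over $K$ and over the locations of the $\tau^{(1)}_k$ introduces a factor of roughly $n^{\ell-1}$; the hypothesis $\alpha>\ell$ is precisely what ensures convergence of the resulting sum and produces the claimed tail $\mathcal{O}(n^{\ell-\alpha})$, in the spirit of the renewal-theoretic estimates familiar from the Young tower literature.
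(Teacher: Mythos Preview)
Your high-level strategy (simultaneous returns, tower axioms inherited componentwise) is the paper's, but the tail argument has real gaps. The component renewal sequences are independent across $i$ under the product measure, but within each component the successive return increments are \emph{not} i.i.d.\ under the reference measure $m_0^{(i)}$ (only under the induced invariant measure, not under Lebesgue), and the events ``component $j$ is at its base at time $\tau_k^{(1)}$'' for different $k$ are correlated under $m_0^{(j)}$. So you cannot simply multiply probabilities or invoke off-the-shelf renewal theory; one needs bounded distortion together with mixing to obtain uniform quasi-independence. The paper arranges this by a device you are missing: it defines an auxiliary sequence $\tau_i$ which alternates between the two components and inserts a fixed waiting time $n_0$ between successive checks, so that mixing plus distortion give a uniform lower bound $\varepsilon_0>0$ on the conditional success probability at each check, and a uniform bound $\bar m_0\{\tau_{i+1}-\tau_i>n\mid \Gamma\}\le K_0\,\bar{\mathcal M}_{n-n_0}$ on the conditional increment tails (Proposition~\ref{key}). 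All three regimes are then derived from these two inputs, and the case of $\ell$ factors is obtained by iterating the two-factor argument.

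Your polynomial argument is also incorrect as written. For $\ell\ge 3$, the event $\{T>n\}$ does not force each of components $2,\dots,\ell$ to individually miss every $\tau_k^{(1)}$; it only forces that at each $\tau_k^{(1)}$ \emph{at least one} of them is absent, so your factorization by independence across $i\ge 2$ fails. Even for $\ell=2$, the claim that ``the cost of a near-miss is controlled by $\mathcal O(n^{-\alpha})$'' is false: the probability that component $2$ misses a single specified time is a constant bounded away from $0$, not polynomially small. In the paper the polynomial loss arises differently: one bounds $\bar m_0\{T>n\}$ by a sum over $i$ of $(1-\varepsilon_0)^{i}$ times the probability that some increment $\tau_j-\tau_{j-1}$ exceeds $n/i$, and the latter is controlled by $\bar{\mathcal M}_{[n/i]}=\sum_{j\ge [n/i]}\mathcal M_j=\mathcal O((n/i)^{1-\alpha})$; the passage from $\mathcal M_n$ to its tail sum is exactly what costs one power of $n$ per pairing, producing the stated $n^{\ell-\alpha}$ after iteration.
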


The idea of constructing a Young tower for a product of Young towers is sketched, in the non-invertible setting with exponential or polynomial return times,
without a fully developed proof, in the   PhD thesis  of Vincent Lynch \cite{LyTh}.
Our construction
and estimates lead to a "loss" of one exponent for each component of the product system in the case of polynomial rates of decay, and also  do not allow us to obtain results if the decay is slower than polynomial, such as in the
  interesting examples of  Holland  \cite{Hol} which exhibit decay at  rates of the form $(\log\circ...\circ\log n)^{-1}$.
It is not clear to us if this is just a technical issue  or if there might be some deeper reasons.

\subsection{Applications}
Young towers  have been shown to imply a variety of
statistical properties such as decay of correlations, invariance
principles, limit theorems which in some cases can also be
quantified in terms of the rate of decay of the
tail of the return times associated to the tower \cite{
AlvPin, Che99, Chern, Gou, Ly,  MelbNic, MelbNic05, Y, LSY}.
An immediate consequence is that the dynamical systems which are direct products of systems which admit Young towers satisfy the statistical properties corresponding to the tail estimates of the product as given in our Theorem.
Some examples of these systems include the following.
 \emph{Lorenz-like interval maps} which are uniformly expanding and have a
single singularity with dense preimages satisfying \( |f'(x)|\approx
|x|^{-\beta}  \) for some \( \beta\in (1/2, 1)  \)
admit a Young Tower with exponential tail, see \cite{DO}
for the precise technical conditions;
 \emph{Multimodal maps} for which  the decay rate of the return times was obtained in terms of the
growth rate of the derivative along the critical orbits \cite{BLS};
 \emph{Maps with critical points and singularities} in one and also higher dimensions \cite{ALP, DHL, AlvSch13};
 \emph{Planar periodic Lorentz gas} was introduced by Sinai \cite{S2}
and  admits Young Towers with exponential tails \cite{LSY,Chern}; \emph{H{\'e}non maps}, for certain choices of parameters \(  (a,b)  \), the maps $H_{a, b}:\mathbb R^2\to \mathbb R^2$ given by
 $H_{a, b}(x, y)=(1-ax^2+y, bx)$  admit a Young Tower with exponential tail \cite{BenCar, BenYou95}; \emph{Partially hyperbolic systems} under certain additional conditions \cite{AlvLi, AlvPin}.

We emphasize that the construction of Young towers, and especially the estimation of the decay of the return times, is in general highly non-trivial and relies on the specific geometric and dynamical properties of the system under consideration. The geometry of the direct product of any of the systems mentioned above is extremely complicated and it is doubtful that a tower construction could be achieved without taking advantage of the information that each component admits a tower with certain decay rates and applying our theorem.

In certain cases, combining our result with existing literature, it is possible to deduce statistical properties of the product system directly from the statistical properties of the component systems without assuming a priori that these admit Young towers. Indeed,  in certain settings, such as that of non-uniformly expanding systems,
statistical properties such as decay of correlations or large deviations imply the existence of Young tower with corresponding tail estimates  see e.g. \cite[Theorem 4.2]{AFLV}.
 Thus, taking the direct product of any finite number of such systems we can apply our result and those of \cite{MelbNic} to conclude that Large Deviations for the component systems implies Large Deviations for the product system.
The same argument also works for  Decay of Correlations but in this case, as Carlangelo Liverani  pointed out to us, it is possible to give an elementary and self-contained direct proof. Since it does not appear to exist in the literature, we include it in the Appendix of this paper.

\subsection{Overview of the paper}
In Section \ref{sec:YT} we give the precise formal definition of a Young tower  in the  non-invertible setting. Most of the paper will then be dedicated to proving our main result in this setting. In Section \ref{pofsuytower} we will give the formal definition  in the invertible case  and show how the  construction easily extends to this setting.
In Section \ref{tfprod} we give the full combinatorial/topological construction of the tower for the product system.  In Section \ref{asymp} we give some basic estimates concerning the asymptotics of the tail of the newly constructed tower for the product, and in Section \ref{pfnew} we apply these to get specific estimates in the exponential, stretched exponential and polynomial case. In Section \ref{final} combining the results from previous sections, we finalize proof of the Theorem in the non-invertible case. Finally, in the appendix we prove some technical estimates and  explain the estimate due to Liverani on the decay of correlations for product systems.

\section{Young Towers}\label{sec:YT}

\subsection{Gibbs-Markov-Young Towers}
\label{sec:GMY}
We start  with the formal definition of Young Tower for non-invertible
maps. To distinguish this case from the tower for invertible maps, this structure
sometimes is referred to as a Gibbs-Markov-Young (GMY) structure or
GMY-tower. Let $f:M\to M$ be a $C^{1+\alpha},$ $\alpha\in(0, 1)$ local diffeomorphism
(outside some critical/singular set) of a Riemannian manifold $M$ on which we have a Riemannian volume  which we will refer to as Lebesgue measure.
To define a  GMY-structure for \(  f  \) we  start with a measurable set  $\Delta_0$ with finite positive Lebesgue measure, we let \(  m_{0}  \) denote the restriction of Lebesgue measure to \(  \Delta_{0}  \),
a mod 0 partition $\mathcal P=\{\Delta_{0, i}\}$ of \(  \Delta_{0}  \), and a return
time function $R:\Delta_0\to \mathbb N$ that is constant on the
partition elements, i.e. $R|\Delta_{0, i}=R_i$  such that the induced map \(  f^{R}:\Delta_{0}\to\Delta_{0}  \) is well defined. Then, for any two point \(  x, y\in \Delta_{0}  \) we can define   the \textit{separation time } $s(x, y)$ as the  smallest  $k\geq 0$ such that $(f^R)^k(x) $ and $(f^R)^k(y)$ lie in different
partition elements and assume that there exists \(  \beta\in (0,1)  \)  and \(  D>0  \)
such that the following conditions are satisfied.

\begin{itemize}\label{tower}
\item[(G1)] \textbf{Markov}: for any $\Delta_{0,i}\in\mathcal{P}$
 the map $f^{R_i}:\Delta_{0,i} \to\Delta_0$ is a bijection.

\item[(G2)] \textbf{Uniform expansion}:  $\|(Df^R)^{-1}(x)\|\leq \beta$  for \(  m_{0}  \) a.e. \(  x  \).

\item[(G3)] \textbf{Bounded distortion}: for a.e.  pair  $x,y\in\Delta_0$ with $s(x, y)<\infty$
we have
\begin{equation}\label{bdd}
\left|\frac{\text{det}Df^R(x)}{\text{det}Df^R(y)}-1\right|\leq
D\beta^{s(f^Rx, f^Ry)}.
\end{equation}
\item[(G4)] \textbf{Integrability}: $\int R\,dm_0<\infty$.
\item[(G5)] \textbf{Aperiodicity}: gcd$\{R_i\}=1.$
\end{itemize}

Young showed that the first three assumptions (G1)-(G3) imply  that the map \(  f  \) admits an \(  f  \)-invariant  measure \(  \mu  \) which is absolutely continuous with respect to Lebesgue. Condition (G4) implies that this measure is finite, and therefore can be taken to be a probability measure, and condition (G5) implies that it is mixing.
We note that the integrability and the aperiodicity assumption are not always included in the definition of a Young Tower. We include them here because we actually require the existence of a mixing probability measure for each component for our argument to work in the proof of the Theorem and also because we are interested in applications to the problem of the rate of decay of correlations for product systems and this requires the measures involved to be mixing probability measures.

We note also  that the return time \(  R  \) is not generally a \emph{first return} time of point to \(  \Delta_{0}  \). It is therefore often useful to work with an ``extension'' of \(  f  \) in which the returns to \(  \Delta_{0}  \) are first return times. This extension is precisely what we refer to as a   \emph{GMY-tower}. The formal construction of this extension proceeds as follows.  We let
\be\label{tower1}
 \Delta=\{(z,n)\in\Delta_0\times\mathbb{Z}^+_0|\,\, R(z)>n\},
 \ee where
$\mathbb{Z}_0^+$ denotes the set of all nonnegative integers. For
$\ell\in\Z_0^+$ the subset $\Delta_\ell=\{(\cdot,\ell)\in\Delta\}$ of $\Delta$ is called its
{\it$\ell$th level}. By some slight abuse of notation, we let \(  \Delta_{0}  \) denote both the  subset of the Riemannian manifold \(  M  \) on which the induced map \(  f^{R}  \) is defined and the 0'th level of tower \(  \Delta  \).
The collection  $\Delta_{\ell,i}:=\{(z,\ell)\in\Delta_\ell|\,\,(z, 0)\in\Delta_{0,i}\}$
forms a partition of $\Delta$ that we denote by $\eta.$
The set $\Delta_{R_i-1,i}$ is called the {\it top
level} above $\Delta_{0, i}.$ We can then define a map \(  F:
\Delta \to \Delta \) letting
\begin{equation}\label{towerm}
F(z,\ell)=\begin{cases}
(z,\ell+1) &\text{if} \,\, \ell+1<R(z),\\
(f^{R(z)}(z), 0)&  \text{if}\,\, \ell+1=R(z).
\end{cases}
\end{equation}
There exists a natural projection  $\pi:\Delta \to M$
defined by   $\pi(x, \ell)=f^\ell(x_0)$ for $x\in\Delta$ with $F^{-\ell}(x)=x_0\in\Delta_0.$
Notice that  \(  \pi   \) is a semi-conjugacy $f\circ\pi=\pi\circ F.$

For future reference we note that we can extend the return time and the separation time to all of \(  \Delta  \). Indeed, for any $x\in\Delta$ we can define a \emph{first hitting time} by
\begin{equation}\label{hitting}
\hat{R}(x):=\min \{n\geq 0 : F^n(x)\in \Delta_0\}.
\end{equation}
Notice that if \(  x\in \Delta_{0}  \) then \(  \hat R(x) = 0  \).
We also  extend the separation time to $\Delta$ by setting $s(x, y)=0$ if $x$ and $y$ belong to different elements of $\eta$ and $s(x, y)=s(\tilde x, \tilde y)$  if $x$ and $y$ are in the same element of $\eta,$ where $\tilde x$ and $\tilde y$ are the corresponding projections to $\Delta_0.$

We define a reference measure  \(  m  \) on \(  \Delta\) as
follows. Let $\mathcal{A}$ be the Borel $\sigma$-algebra on
$\Delta_0$ and let $m_0$ denote
the restriction of Lebesgue measure to $\Delta_0$ where, as mentioned above,  we are identifying  \(  \Delta_{0}\subset M  \) with the 0'th level of the tower.  For  any $\ell\geq 0$ and $A\subset\Delta_\ell$ such that
 $F^{-\ell}(A)\in\mathcal{A}$ define
$m(A)=m_0(F^{-\ell}(A)).$ Notice that with this definition, the restriction of \(  m  \) to \(  \Delta_{0}  \) is exactly \(  m_{0}  \), whereas the restriction of \(  m  \) to the upper levels of the tower is not equal to the Riemannian volume of their projections on \(  M  \) because the tower map \(  F: \Delta \to \Delta   \) is by definition an isometry between one level and the next, except on the top level where it maps back to the base with an expansion which corresponds to the ``accumulated'' expansion of all the iterates on the manifold.
Correspondingly, for every $x\in \Delta_{R_i-1, i}\subset\Delta$, the Jacobian of \(  F  \) at \(  x  \) is  $JF(x)=\text{det} Df^R(x_0),$ where  $x_0=F^{-R_i+1}x \in\Delta_0,$  and $JF(x)= 1$ otherwise.

\section{A tower for the product}\label{tfprod}
In this section we begin the proof of our main result. To simplify the notation we will assume that we have a product of only two systems, the general case follows immediately by iterating the argument.
\subsection{Basic ideas and notation}
We begin by introducing some basic notions. Suppose  $F:(\Delta, m)\circlearrowleft$ is a GMY-tower as defined in
\eqref{tower1} and \eqref{towerm} above  and let
 $\eta$ be the partition of $\Delta$ into $\Delta_{\ell, i}$'s. Then,  for \(  n\geq 1  \),   let
\be\label{join}
\eta_n:=\bigvee_{j=0}^{n-1}F^{-j}\eta:=\{A_1\cap F^{-1}(A_2)\cap ...
\cap F^{1-n}(A_n)| \,\, A_1, ..., A_n\in \eta\}
\ee
be the refinements of the partition \(  \eta  \) defined by the map \(  F  \).
For  $x\in \Delta$ let $\eta_n(x)$ be the element containing $x.$ From \eqref{join} it is easily seen that $\eta_n(x)$ has the form
$$
\eta_n(x)=\left(\bigvee_{j=0}^{n-1}F^{-j}\eta\right)(x)=\eta(x)\cap F^{-1}\eta(F(x))\cap...\cap F^{1-n}\eta(F^{n-1}(x)).
$$

\begin{remark}
To get a better feeling for the partitions \(  \eta_{n}  \) notice that
from the definition of tower for $x\notin\Delta_0$ we have
$F^{-1}(\eta(x))=\eta(F^{-1}(x)),$ which shows that the element $\eta(x)$
gets refined only when $F^{-j}(x)\in\Delta_0$ for some $j,$
$j=0,...,n-2.$  It may be instructive to consider more in detail the cases \(  n=2  \) and \(  n=3  \) (notice that \(  \eta_{1}= \eta  \)).
For  $n=2$, from \eqref{join} we have
$F^{-1}\eta\vee\eta.$ In this case only the elements on the top
levels (recall definition just before equation \eqref{towerm}) get refined so that the new elements are mapped by
$F$ bijectively onto $\Delta_{0,i}\subset\Delta_0,$ for some $i$.
\begin{figure}[!ht]
\centering{
   \mbox{\subfigure{\includegraphics[scale=0.35]{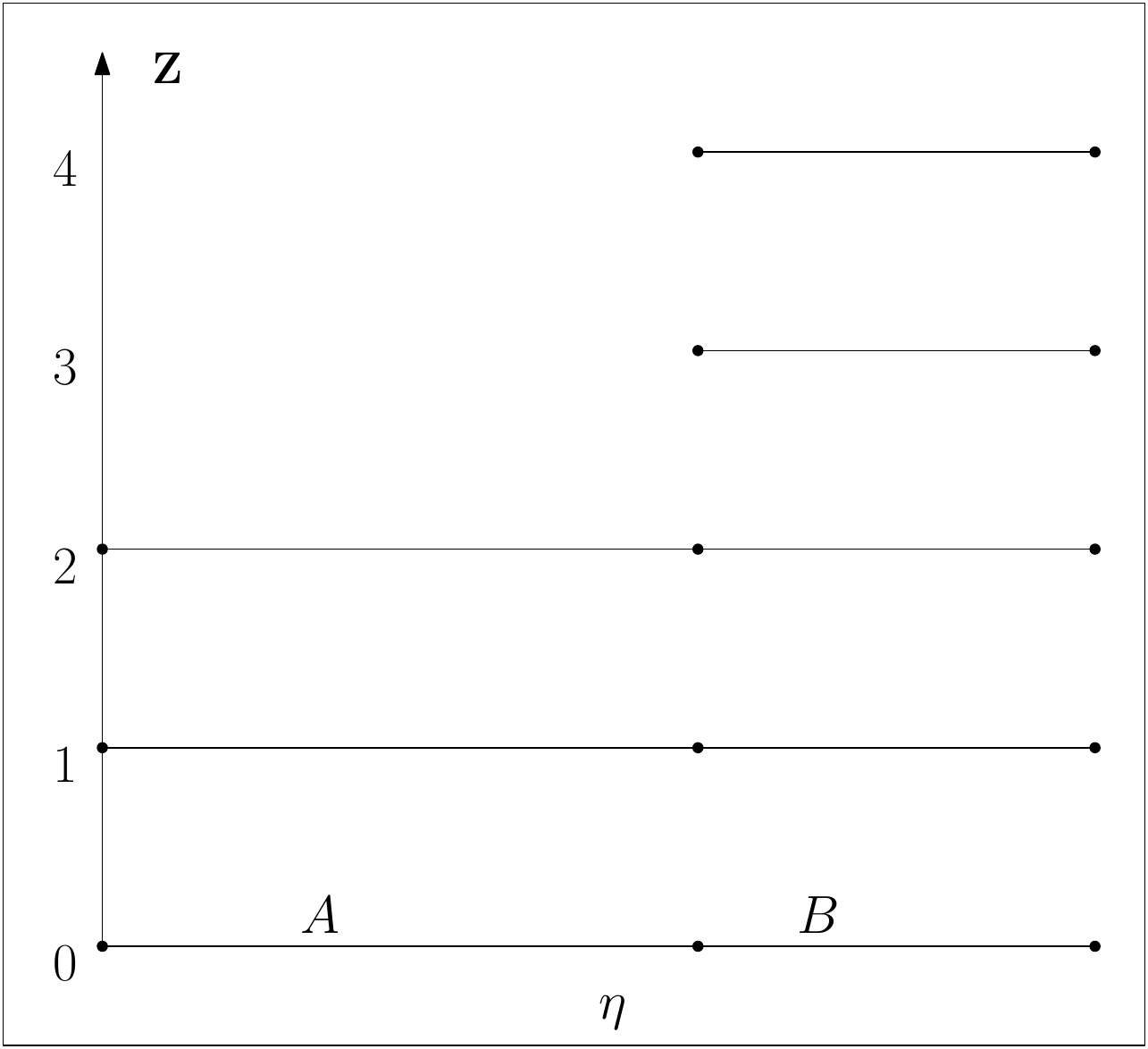}}\qquad\qquad
 \subfigure{\includegraphics[scale=0.35]{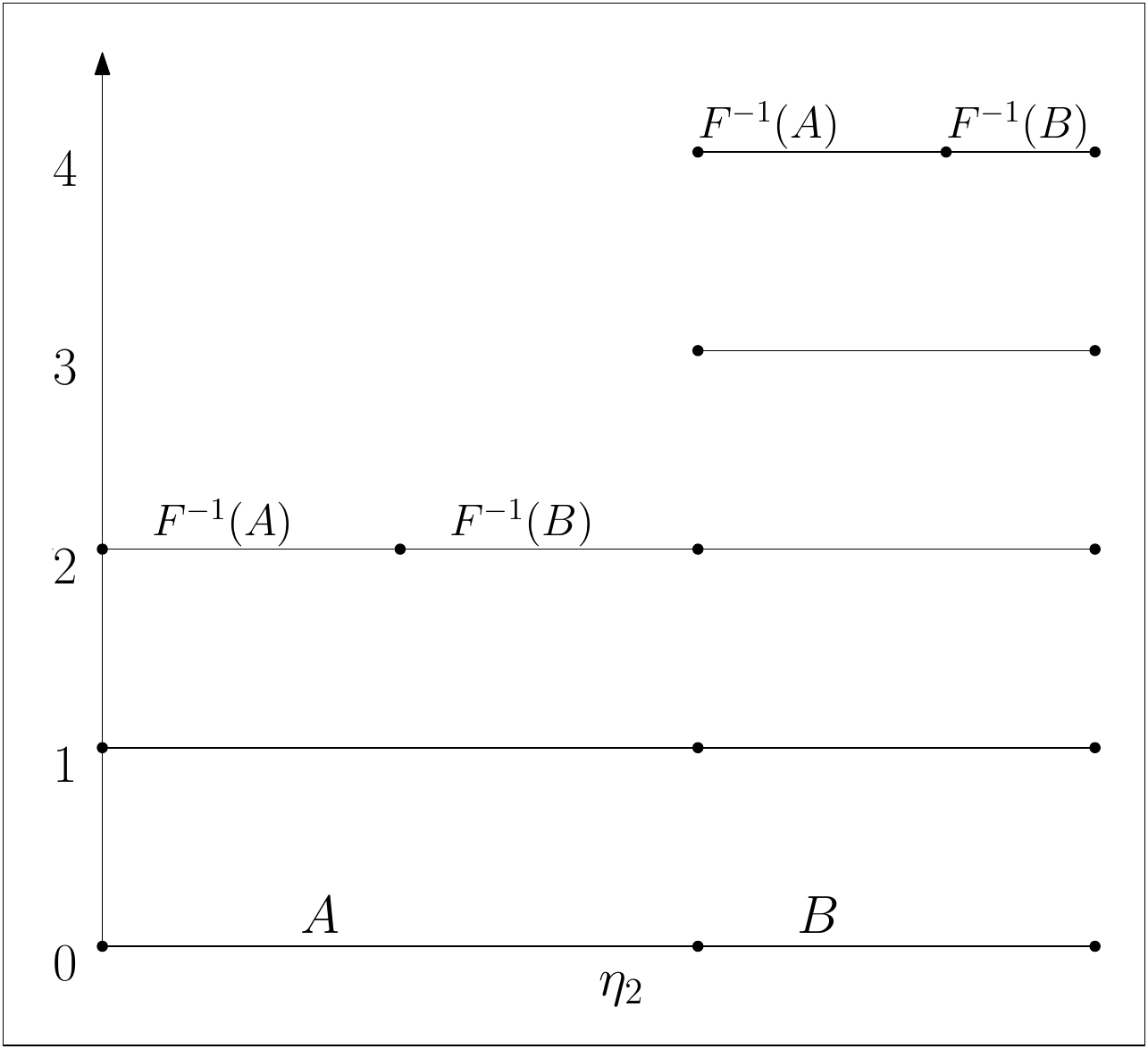}}}}
    \caption{$\eta$ and $\eta_2$}\label{pic1}
\end{figure}
All the other elements remain unchanged, see Figure \ref{pic1} (for simplicity, the
pictures are drawn when the partition of base contains only two
elements and has return times 3 and 5, in
particular, for $x\in\Delta$ with $F^2(x)\in\Delta_0$  we have
$F^2(\eta_2(x))=\Delta_0.$ This is because $\eta_2(x)=\eta(x)$ and
$F(\eta(x))$ is top level element of $\eta).$
\begin{figure}[!ht]
   \centering{
    \mbox{\subfigure{\includegraphics[scale=0.35]{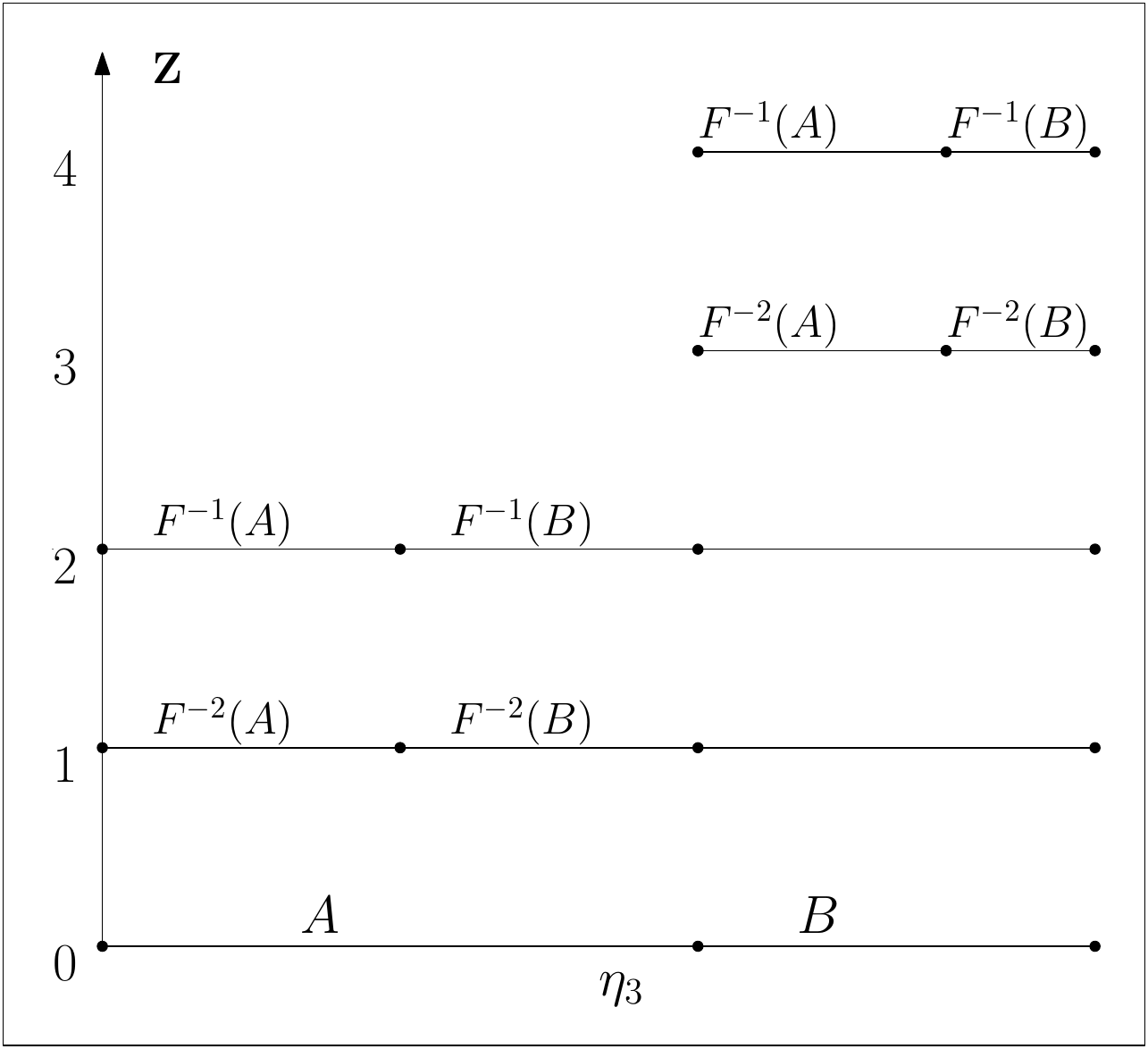}}\qquad\qquad
    \subfigure{\includegraphics[scale=0.35]{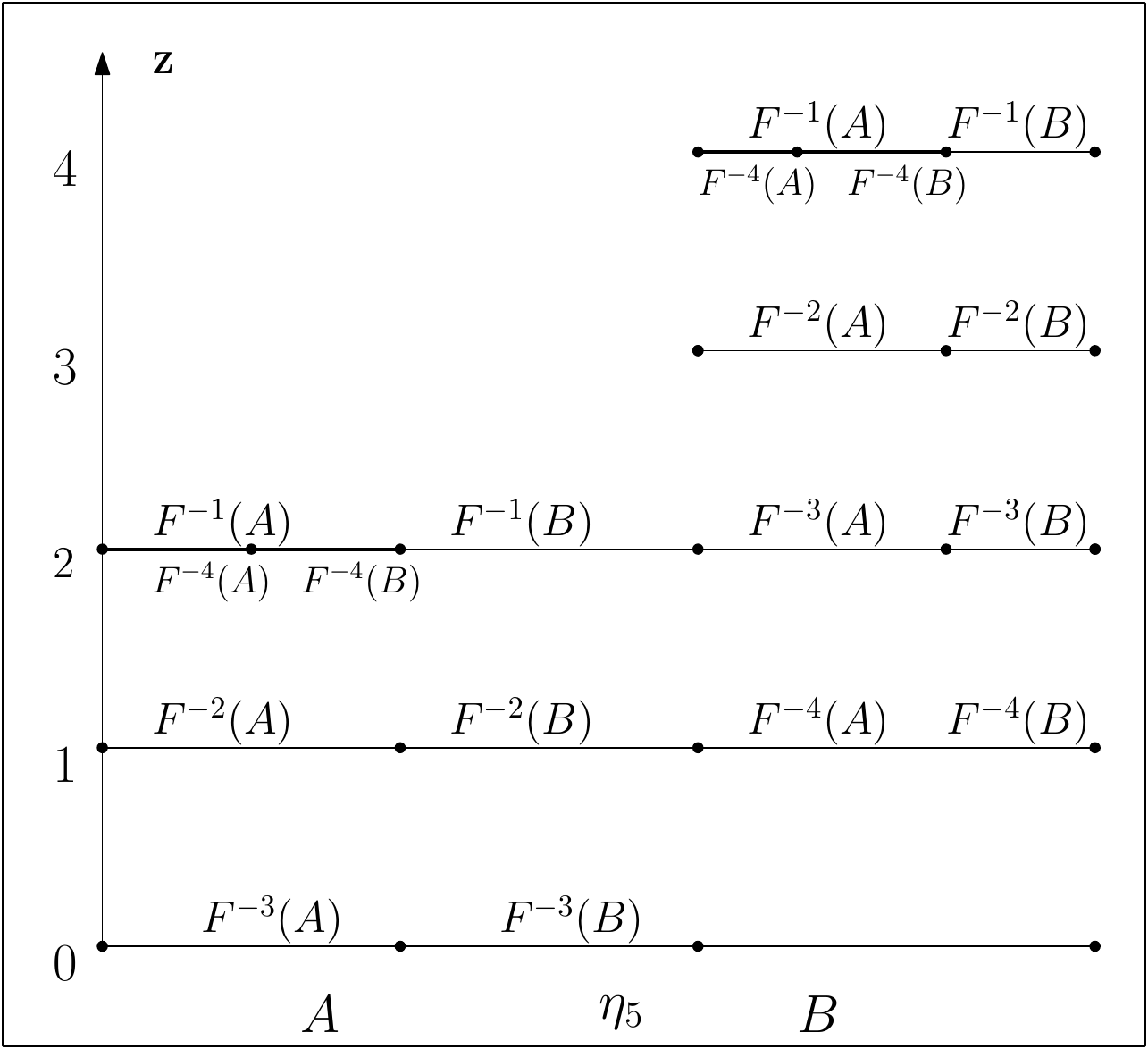}}}}
    \caption{$\eta_3$ and $\eta_5$}\label{pic2}
  \end{figure}
For $n=3$ all the elements on the top levels and on the levels ``immediately below'' the top levels
get refined so that the top level elements of the new partition
are mapped onto some $\Delta_{0,i}$ by $F;$  the elements belonging
to the levels immediately below the top levels  are mapped onto some
$\Delta_{0,i}$ by $F^2$ and other elements remain unchanged, see the left hand side of Figure
\ref{pic2}. On the right hand side of Figure \ref{pic2} is illustrated the situation for \(  n=5  \) in this simple example, where   $n=\min_i\{R_i\}+2,$ and therefore the refinement procedure ``reaches'' $\Delta_0.$  After
this time, the top levels undergo a ``second round'' of refinements. In
Figure  \ref{pic2} the bold elements of $\eta_5$ are the elements of
$\eta$ which have been refined twice.
In general, for each $n$ the refinement procedure affect
$n-1$ levels below the top levels. If $n$ is sufficiently large, some of the partition elements might get refined several times.
\end{remark}

%

The following statement follows almost immediately from the observation above. Recall also the definition of the first hitting time in \eqref{hitting}.

 \begin{lemma}\label{abtjoin}
For any $x\in F^{-n}(\Delta_0)$ the map $F^n:\eta_n(x)\to \Delta_0$ is a bijection and   $F^n(\eta(x))=\Delta_0.$
 Moreover if $\hat{R}(x)=n$ then $\hat{R}|\eta_n(x)\equiv n.$
 \end{lemma}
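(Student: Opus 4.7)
The plan is to prove both claims by induction on $n$, using as the central observation (from the remark preceding the lemma) that $\eta_n(x)$ is refined relative to $\eta(x)$ only at those iterates $j$ for which $F^j(x)\in\Delta_0$; between two consecutive visits to the base the tower map \eqref{towerm} just shifts $\eta(x)$ rigidly one level up.

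To set up the recursion, given $x \in F^{-n}(\Delta_0)$, I would write $x\in\Delta_{\ell,i}$ and let $m$ be the first positive integer with $F^m(x)\in\Delta_0$, so $m=R_i-\ell$ when $\ell>0$ and $m=R_i$ when $\ell=0$. In both cases (G1) together with \eqref{towerm} produces a bijection $F^m:\eta(x)\to\Delta_0$. The $n$ constraints defining $\eta_n(x)=\bigcap_{j=0}^{n-1}F^{-j}\eta(F^j(x))$ then split into two groups: for $0\le j<m$ the element $\eta(F^j(x))$ is simply the $j$-th level above $\eta(x)$, so the condition $F^j(y)\in\eta(F^j(x))$ collapses to $y\in\eta(x)$; for $m\le j<n$ it rewrites, using $F^j(y)=F^{j-m}(F^m(y))$, as $F^m(y)\in\eta_{n-m}(x')$ with $x'=F^m(x)\in\Delta_0$. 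Hence
\[
\eta_n(x)\;=\;\eta(x)\cap F^{-m}\bigl(\eta_{n-m}(x')\bigr).
\]
When $n=m$ the second group of constraints is empty, so $\eta_n(x)=\eta(x)$ and the claim reduces to (G1); otherwise $m<n$ and the induction hypothesis applied to $x'\in F^{-(n-m)}(\Delta_0)$ gives a bijection $F^{n-m}:\eta_{n-m}(x')\to\Delta_0$. Composing this with the bijection $F^m:\eta(x)\to\Delta_0$ via the displayed recursion yields the desired bijection $F^n:\eta_n(x)\to\Delta_0$, and in particular $F^n(\eta_n(x))=\Delta_0$.

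For the statement about $\hat R$, suppose $\hat R(x)=n$, so $F^j(x)\notin\Delta_0$ for every $0\le j<n$. Each element of $\eta$ has the form $\Delta_{\ell,i}$ and is therefore either contained in $\Delta_0$ (when $\ell=0$) or disjoint from $\Delta_0$ (when $\ell>0$); for each $j<n$ the element $\eta(F^j(x))$ falls into the second category. Hence for any $y\in\eta_n(x)$ the defining condition $F^j(y)\in\eta(F^j(x))$ forces $F^j(y)\notin\Delta_0$ for $j<n$, while the bijection already proved gives $F^n(y)\in\Delta_0$; together these say $\hat R(y)=n$. The whole argument is essentially combinatorial bookkeeping on the tower, and the only step that requires a moment of care is the verification that $\eta(x)$ is not refined during the first $m$ iterates, which is precisely what the remark makes explicit.
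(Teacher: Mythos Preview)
Your argument is correct. Both you and the paper prove the first claim by induction on $n$, but with a different step size: the paper uses the one-step recursion $\eta_{k+1}(x)=F^{-1}\eta_k(F(x))\cap\eta(x)$ and then takes intersections of images, whereas you jump directly to the first return time $m$ and use the recursion $\eta_n(x)=\eta(x)\cap F^{-m}\bigl(\eta_{n-m}(F^m(x))\bigr)$. Your decomposition has the advantage that bijectivity is explicit at every stage, since you are composing the bijection $F^m:\eta(x)\to\Delta_0$ coming from (G1) with the inductively given bijection $F^{n-m}:\eta_{n-m}(x')\to\Delta_0$; the paper's proof, by contrast, only verifies that the image equals $\Delta_0$ and leaves injectivity implicit. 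Your treatment of the second claim about $\hat R$ is also more detailed than the paper's one-line remark that the first return time is constant on elements of $\eta$. In short: same inductive strategy, a slightly cleaner execution.
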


\begin{proof}
The proof is by induction on $n.$
For $n=1$ we have no refinement, $\eta_1=\eta$, and therefore the conclusion
follows from the definition of tower.
Assume that the assertion is true for $n=k.$ From equality \eqref{join} we obtain
$\eta_{k+1}=F^{-1}(\eta_k)\vee\eta.$
 Let $x\in\Delta$ be a point, such that $F^{k+1}(x)\in\Delta_0$ then using the relation
 $\eta_{k+1}(x)=F^{-1}\eta_k(F(x))\cap\eta(x)$ we obtain
$F^{k+1}(\eta_{k+1}(x))=F^k(\eta_k(F(x)))\cap F^{k+1}(\eta(x)).$
Since $F^k(\eta_k(F(x)))=\Delta_0$  from the inductive assumption
and $\Delta_0\subset F^{k+1}(\eta(x))$
 from the definition of tower we get $F^{k+1}(\eta_{k+1}(x))=\Delta_0.$
Since the first return time is constant on the elements of $\eta,$ the
second assertion follows.
\end{proof}

\subsection{Return times to  $\bar{\Delta}_0$}\label{partition}
We are now ready to begin the construction of the tower for the product system. Since we are considering the product of just two systems, we will omit superfluous indexing and
let
$f$ and $f'$ be two maps that admit GMY-structure with the bases $\Delta_0,$ $\Delta_0'$ and return time functions $R,$ $R'$ respectively.
Then we have associated GMY-towers
 \[
 F:(\Delta, m)\circlearrowleft
 \quad \text{ and } \quad
 F':(\Delta', m')\circlearrowleft
 \]
with  bases $\Delta_0$ and  $\Delta_0'$ and return time functions
\(   R(x)  \) and  $ R'(x')$ respectively.
Let
\[  \bar{\Delta}=\Delta\times\Delta'
\quad \text{ and } \quad
\bar{\Delta}_0=\Delta_0\times\Delta_0'
\]
denote the product of the two towers, and
 the product of their bases respectively. Letting \(  m_{0}, m_{0}'  \) denote the restrictions of \(  m, m'  \) to \(  \Delta_{0}, \Delta_{0}'  \) respectively, we let  \(  \bar m = m \times m'  \) and \(  \bar m_{0} = m_{0} \times m'_{0}  \)  denote the product measures on the corresponding products.
 The direct product map    $\bar{F}=F\times F'$ is defined on \(  \bar \Delta  \) and we will construct a tower for \(  \bar F  \) with base \(  \bar{\Delta}_0  \).
We start by defining the return time function \(  T  \) on  $\bar\Delta_0$.  From Theorem 1 in
\cite{LSY}  there exist mixing invariant  probability measures $\mu,$ $\mu'$ for $F$ and $F'$, equivalent to \(  m, m'  \) respectively, and with densities which are uniformly bounded above and below.
 Therefore,
there exist constants \(  c>0  \) and  $n_0>0$  such that
\begin{equation}\label{mixing}
m(F^{-n}(\Delta_0)\cap\Delta_0)>c>0
\quad\text{ and } \quad
m'(F'^{-n}(\Delta_0')\cap\Delta_0')>c>0
\end{equation}
for all \( n\geq n_0\).
 We choose such $n_0$
and introduce  a sequence $\{\tau_i\}$ of positive integers as follows.
For
$\bar{x}=(x, x')\in\bar{\Delta}$  let
$$
\tau_0(\bar{x})=0 \quad \text{ and } \quad \tau_1(\bar{x}):=n_0+\hat{R}(F^{n_0}x).
$$
The other elements of the sequence \(  \{\tau_{i}\}  \) are defined inductively  by iterating  \(  F  \) or \(  F'  \)  alternately depending on whether \(  i  \) is odd or even. More formally,  for every  $j\ge 1$ let
\begin{equation}\label{tau}
\begin{aligned}
&\tau_{2j}(\bar{x}) :=\tau_{2j-1}(\bar{x})+n_0+\hat{R}'({F'}^{n_0+\tau_{2j-1}}x'),
\\
&\tau_{2j+1}(\bar{x}) :=\tau_{2j}(\bar{x})+n_0+\hat{R}(F^{n_0+\tau_{2j}}x).
\end{aligned}
\end{equation}

\begin{remark}\label{n0}
Notice that at every step we ``wait'' for \(  n_{0}  \) iterates before defining the next term of the sequence.
This implies that  for any $i$ we have $\tau_i-\tau_{i-1}\ge n_0$
and therefore, from \eqref{mixing}, we get
\[
m_0(\Delta_0\cap F^{\tau_{i-1}-\tau_i}\Delta_0)\geq c>0
\quad\text{ and } \quad
m_0(\Delta_0\cap F^{\tau_{i-1}-\tau_i}\Delta_0)\geq c>0
\]
We use this fact in the proof of the first item in Proposition \ref{key}.
\end{remark}

\subsection{Initial step of construction of the partition of \(  \bar\Delta_{0}  \)}
Now we can begin to define a partition $\bar{\eta}$ of $\bar{\Delta}_0.$
Recall that the towers  $\Delta,$  $\Delta'$ of the component systems admit by definition partitions into sets of the form
 $\Delta_{\ell,j},$ $\Delta'_{\ell,j}$. We will denote these given partitions by \(  \eta, \eta'  \) respectively and their restrictions to \(  \Delta_{0}, \Delta_{0}'  \) by \(  \eta_{0}, \eta_{0}'  \). We let
 \[
 \xi_{0}=\eta_{0}\times \eta_{0}'
 \]
 denote the corresponding partition of the product \(  \bar \Delta_{0}  \). Our goal is to define a partition \(  \bar\eta  \) of \(  \bar\Delta_{0}  \)  with the property that for each \(  \bar x\in \bar\Delta_{0}  \) the corresponding partition element \(  \bar\eta(\bar x) \in \bar\eta  \) maps bijectively to \(  \bar\Delta_{0}  \) with some return time \(  T(\bar\eta(\bar x))  \).
 Its construction requires the definition of
  an increasing sequence of partitions of \(  \bar\Delta_{0}  \) denoted by
$$\xi_{0}\prec\xi_1\prec\xi_2\prec \xi_3\prec...$$
A key property of these partitions will be that the sequences \(  \tau_{1},..., \tau_{i}  \) are constant on elements of \(  \xi_{i}  \). Moreover,   the construction of \(  \bar\eta  \) implies that all return times \(  T(\bar x)=T(\bar\eta((\bar x))  \) are of the form \(  T(\bar x)= \tau_{i}(\bar x)  \) for some \(  {i}  \) where \(  \tau_{i}  \) belongs to the sequence defined above. This property will be used below for the estimates of the tail of the return times.
In particular, notice also that there may be some elements of \(  \eta_{0}\times \eta_{0}'  \) which map bijectively to \(  \bar\Delta_{0}  \), and thus are candidates for elements of \(  \bar\eta  \), but are not guaranteed to satisfy the requirement just stated above.
The partition \(  \bar\eta  \) will be defined as the union
\begin{equation}\label{etabar}
\bar\eta = \bigcup_{i=1}^{\infty} \bar\eta_{i}
\end{equation}
of disjoint sets \(  \bar\eta_{i}  \) which consists of a collection of subsets which are defined in the first \(  i  \) steps of the construction.

The systematic construction proceeds as follows.
For each
$\bar{x}=(x,x')\in\bar{\Delta}_0,$  let
\begin{equation}\label{etatau}
\eta_{\tau_1(x)}(x):=(\bigvee_{j=0}^{\tau_1(\bar{x})-1}F^{-j}\eta)(x).
\end{equation}
It follows immediately that \(  \eta_{\tau_{1}}(x)\subseteq \eta_{0}(x)  \).  Indeed, as the following simple Lemma proves, collection $\eta_{\tau_1}:=\{\eta_{\tau_1(x)}(x)|_{}x\in\Delta_0\}$ is in fact a partition of \(  \Delta_{0}  \).
\begin{lemma}\label{constanttau}
\(  \eta_{\tau_{1}}  \) is a partition of $\Delta_0$, \(  \eta_{0}\prec \eta_{\tau_{1}}  \) and \(  \tau_{1}  \) is constant on elements of \(  \eta_{\tau_{1}}  \).
\end{lemma}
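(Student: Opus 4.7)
The plan is to prove the three assertions in order, with the constancy of $\tau_1$ on $\eta_{\tau_1(x)}(x)$ doing the bulk of the work; the other two parts will follow cheaply.

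For the constancy, I would first observe that $\tau_1(\bar{x}) = n_0 + \hat{R}(F^{n_0}x)$ depends only on the first coordinate and that the hitting time $\hat{R}$ is constant on each element of $\eta$: explicitly, a point in $\Delta_{\ell,i}$ has $\hat{R} = R_i-\ell$ (and $\hat{R}=0$ on $\Delta_0$). Thus the question reduces to showing that if $y\in\eta_{\tau_1(x)}(x)$, then $F^{n_0}y$ and $F^{n_0}x$ sit in the same element of $\eta$. By the definition \eqref{etatau}, the set $\eta_{\tau_1(x)}(x)$ already ensures that $F^j y$ and $F^j x$ share an element of $\eta$ for every $j=0,\ldots,\tau_1(x)-1$, so the claim is immediate whenever $n_0\le\tau_1(x)-1$. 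The only delicate point is the boundary case $\hat{R}(F^{n_0}x)=0$, where the join runs only up to $j=n_0-1$; here one notes that $F^{n_0}x\in\Delta_0$ forces $F^{n_0-1}x$ to lie in a top level $\Delta_{R_i-1,i}$, and since $F^{n_0-1}y$ lies in the same $\eta$-element, the tower dynamics compel $F^{n_0}y\in\Delta_0$ as well. In either case $\hat{R}(F^{n_0}y)=\hat{R}(F^{n_0}x)$ and hence $\tau_1(y)=\tau_1(x)$.

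The refinement $\eta_0\prec\eta_{\tau_1}$ is then immediate from the $j=0$ term in the join \eqref{etatau}, which already contributes $\eta(x)=\eta_0(x)$ for $x\in\Delta_0$, so $\eta_{\tau_1(x)}(x)\subseteq\eta_0(x)$. Finally, for the partition property of $\eta_{\tau_1}$, coverage is trivial since $x\in\eta_{\tau_1(x)}(x)$, and disjointness follows from the constancy just proved: if $z\in\eta_{\tau_1(x)}(x)\cap\eta_{\tau_1(y)}(y)$, then applying the constancy argument to each of $x$ and $y$ with $z$ in place of $y$ forces $\tau_1(x)=\tau_1(z)=\tau_1(y)=:N$, and since for any fixed $N$ the cylinders $\{\eta_N(w):w\in\Delta\}$ form a genuine partition of $\Delta$, the two cylinders containing $z$ must coincide. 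I do not anticipate any serious obstacle; the boundary case in the constancy argument is the only subtlety and is resolved by reading off the tower's deterministic structure.
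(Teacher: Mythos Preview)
Your proposal is correct and follows essentially the same approach as the paper: both arguments first establish that $\tau_1$ is constant on $\eta_{\tau_1(x)}(x)$ by using that points in a single $\eta_n$-cylinder share the same $\eta$-itinerary for the first $n$ steps, and then deduce the partition property exactly as you do (coverage trivial; disjointness by noting equal $\tau_1$-values force both sets to be cylinders of the same fixed partition $\eta_N$). The only difference is cosmetic: the paper packages the constancy step by invoking Lemma~\ref{abtjoin} and the phrase ``same combinatorics up to time $\tau_1(x)$'', whereas you unpack the join directly and single out the boundary case $\hat R(F^{n_0}x)=0$ explicitly, which the paper's formulation absorbs into that phrase.
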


\begin{proof}
First of all, note  that since $\tau_1$ depends only on the first coordinate $\eta_{\tau_1(x)}(x)\in\eta_n,$ with $n=\tau_1(x).$ Then Lemma \ref{abtjoin} implies that  $F^{\tau_1(x)}(\eta_{\tau_1(x)})=\Delta_0$ bijectively. In particular, all the points in $\eta_{\tau_1(x)}(x)$ have the same combinatorics up to time $\tau_1(x),$ and hence $\tau_1$ is constant
on $\eta_{\tau_1(x)}(x).$

Now, to prove the Lemma, since   $\eta_{\tau_1}$ clearly covers $\Delta_{0}$, we just need to show every pair of sets in $\eta_{\tau_1}$ are either disjoint or  coincide. Let $\eta_{\tau_1(x)}(x)$ and  $\eta_{\tau_1(y)}(y)$ be two arbitrary elements of $\eta_{\tau_1}.$ If $\tau_1(x)=\tau_1(y)=n$ then $\eta_{\tau_1(x)}(x)$ and  $\eta_{\tau_1(y)}(y)$ are the elements of $\eta_n,$ hence they are disjoint or coincide. If $\tau_1(x)\neq\tau_1(y)$ then $\eta_{\tau_1(x)}(x)\cap\eta_{\tau_1(y)}(y)=\emptyset$ because $\tau_1$ is constant on  elements of $\eta_{\tau_1}.$
\end{proof}
We can now define the partition $\xi_1$ of $\bar\Delta_0$ by letting, for every
\( \bar x=(x, x')\in\bar\Delta_0  \),
$$
\xi_1(\bar{x}):=\eta_{\tau_1(x)}(x)\times \eta_{0}'(x').
$$
Notice  that each element \(  \Gamma\in \xi_{1}  \)  has an associated value of \(  \tau_{1}  \) such that \(  F^{\tau_{1}}(x)\in \Delta_{0}  \) for every
\(  x \in \pi\Gamma  \). On the other hand, we do not a priori have any information about the location of  \(  F'^{\tau_{1}}(x')  \) for \(  x' \in \pi' \Gamma  \) (since \(  \tau_{1}  \) is defined in terms of properties of  \(  F  \)). To study the distribution of such images, for a given \(  \Gamma\in \xi_{1}  \), we consider sets of the form
\begin{equation}\label{eta1}
\eta'_{\tau_{1}}(x')=(\bigvee_{j=0}^{\tau_1-1}F'^{-j}\eta')(x').
\end{equation}
The collection of such sets, for all \(  x'\in \pi'\Gamma  \),  form a refinement of \(  \pi'\Gamma  \).   For those points \(  x'\in \pi'\Gamma  \) such that \(  F^{\tau_{1}}(x')\in \Delta_{0}'  \) we then have that
\[
F'^{\tau_{1}}: \eta'_{\tau_{1}}(x') \to \Delta_{0}'
\]
bijectively.  In this case we consider the set
\[
\bar\eta_{1}(x, x') = \eta_{\tau_{1}}(x)\times \eta'_{\tau_{1}}(x').
\]
which maps bijectively to \(  \bar\Delta_{0}  \) by \(  \bar F^{\tau_{1}}  \),
and let \(  \bar\eta_{1}  \) denote the collection of all sets of this form constructed at this step. This is the first collection of sets which will be included in the union \eqref{etabar} defined above.

\subsection{General step} We now describe the general inductive step in the construction of the sequence of partitions \(  \xi_{i}  \) and the sets \(  \bar\eta_{i}  \).
The main inductive assumption is that partitions
$\xi_i$ of \(  \bar\Delta_{0}  \) have been  constructed for all $i<k$   in such a way that  on each element of $\xi_i$ the functions $\tau_1,..., \tau_{i}$ are constant and such that each component of  \(  \bar\eta_{i-1}  \) is contained inside an element of \(  \xi_{i-1}  \) and contains one or more elements of \(  \xi_{i}  \) (in particular elements of the partition \(  \xi_{i}  \) either have empty intersection with \(  \bar\eta_{i-1}  \) or are fully contained in some component of \(  \bar\eta_{i-1}  \)).

The notation is slightly different depending on whether \(  k  \) is even or odd, according to the different definitions of \(  \tau_{k}  \) in these two cases, recall \eqref{tau}.  For definiteness we assume that \(  k  \) is odd, the construction for \(  k  \) even is the same apart from the change in the role of the first and second components.  We fix some \(  \Gamma\in \xi_{k-1}  \) and define the partition \(  \xi_{k}|\Gamma  \) as follows.
For $\bar x\in\Gamma$,  let
\[
\eta_{\tau_{k}(x)}(\bar x)=(\bigvee_{j=0}^{\tau_{k}(\bar{x})-1}F^{-j}\eta)(x).
\]
A direct generalization of Lemma \ref{constanttau} gives  that the collection of sets \(  \eta_{\tau_{k}}:=\{\eta_{\tau_{k}(x)}(x)|x\in\pi\Gamma \} \) is a partition of \(  \pi\Gamma  \) on whose elements \(  \tau_{k}  \) is constant. For every $\bar x\in\Gamma$ we let
$$
\xi_{k}(\bar{x}):=\eta_{\tau_{k}(x)}(\bar x)
\times\pi'\Gamma.
$$
This completes the definition of the partition \(  \xi_{k}  \) and allows us define \(  \bar\eta_{k}  \).
As mentioned in the inductive assumptions above, each component of \(  \bar\eta_{k}  \) will be contained in an element of \(  \xi_{k}  \). Thus, generalizing the construction of such elements in the first step given above, we fix one element \(  \Gamma\in \xi_{k}  \) and proceed as follows.
By construction we have $F^{\tau_{k}}(\pi\Gamma)=\Delta_0$ and ${F'}^{\tau_{k}}(\pi'\Gamma)$ is spread around $\Delta'.$  Therefore, for every $x'\in \pi'\Gamma$  we consider sets of form
\begin{equation}\label{etaprime}
\eta'_{\tau_{k}}(x')=
(\bigvee_{j=0}^{\tau_{k}-1}F'^{-j}\eta')(x').
\end{equation}
The collection of such sets, for all \(  x'\in \pi'\Gamma  \), form a refinement of \(  \pi'\Gamma  \).
For those points \(  x'\in \pi'\Gamma  \) such that \(  F^{\tau_{k}}(x')\in \Delta_{0}'  \) we then have that
\[
F'^{\tau_{k}}: \eta'_{\tau_{k}}(x') \to \Delta_{0}'
\]
bijectively.  In this case we consider the set
\[
\bar\eta_{k}(x, x') = \eta_{\tau_{k}}(x)\times \eta'_{\tau_{k}}(x')
\]
which maps bijectively to \(  \bar\Delta_{0}  \) by \(  \bar F^{\tau_{k}}  \),
and let \(  \bar\eta_{k}  \) denote the collection of all sets of this form constructed at this step. Moreover, for each such set we let \(  T=\tau_{k}  \). Finally, notice that the construction of  \(  \bar\eta_{k}  \) through the formula \eqref{etaprime} implies that the elements  of the partition \(  \xi_{k+1}  \), to be constructed in the next step, are either disjoint from \(  \bar\eta_{k}  \) or contained components of \(  \bar\eta_{k}  \). Indeed, the construction of \(  \xi_{k+1}  \) involves a formula analogous to \eqref{etaprime} with \(  \tau_{k}  \) replaced by \(  \tau_{k+1}  \), clearly yielding a finer partition of \(  \pi'\Gamma  \).

This completes the general step of the construction and in particular allows us to define the set \(  \bar\eta  \) as in \eqref{etabar}.
We remark that by construction all components of \(  \bar\eta  \) are pairwise disjoint but we have not yet proved that \(  \bar\eta  \) is a partition of \(  \bar\Delta_{0}  \). This will be an immediate consequence of Proposition \ref{key}  in the next section. For formal consistency of notation we let
\(
T=\infty
\)
 on all points in the complements of the elements of \(  \bar\eta  \).
Notice that it follows from the construction that for any \(  \Gamma\in \bar\eta  \):
\begin{enumerate}
\item
$T|\Gamma=\tau_i|\Gamma$ for some $i.$
\item  $\bar{F}^T(\Gamma)=\bar{\Delta}_0$.
\end{enumerate}
In particular all elements of \(  \bar\eta   \) satisfy properties (G1) and (G2) in the definition of Young Tower.
In Section \ref{asymp} we obtain some preliminary estimates concerning the general asymptotics of the return times of the elements of \(  \bar\eta  \) defined above, and in Section  \ref{pfnew} we consider the specific cases of polynomial, stretched exponential and exponential decay rates. In Section \ref{final} we use all these estimates to prove that \(  \bar\eta  \) is indeed a partition of \(  \bar\Delta_{0}  \) and that the return map \(  \bar F^{T}  \) satisfies the required properties (G3)-(G5).

\section{Return time asymptotics}\label{asymp}
 In this section we begin the study of the
asymptotics  of the return time \(  T  \). We will use the following
general notation for conditional measures: if \(  \mu  \) is a measure we write
$\mu(B|A):={\mu(A\cap B)}/{\mu(B)}.$ Also, for every \(  n\geq 1  \), we write
\begin{equation}\label{mbar1}
\bm_n:=\sum_{j\ge n}\mathcal{M}_j
\end{equation}
where \(  \mathcal M_{j}  \) is the bound on the tails of the component systems, as in \eqref{M}.
The main result of this section is the following

\begin{proposition}\label{key}
There exist constants $\varepsilon_0,$ $K_0>0$ such that for any $i\geq 2$
\begin{enumerate}
\item  $ \bar{m}_0\{T=\tau_i|T>\tau_{i-1}\}\ge\varepsilon_0$
\item  $\bar{m}_0\{\tau_{i+1}-\tau_i\geq n|\Gamma\}\leq K_0 \bm_{n-n_{0}},$
for any  $n>n_0$ and
    $\Gamma\in\xi_i.$
\end{enumerate}
\end{proposition}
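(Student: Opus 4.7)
The plan is to handle both items by reducing, via the bounded distortion assumption (G3), to a one-coordinate estimate on the appropriate base, where either the mixing inequality \eqref{mixing} (for (1)) or the tail identity \( m'\{\hat R'\geq k\}=\sum_{j\geq k}m_{0}'\{R'>j\} \) (for (2)) can be applied. I treat the case \( i \) odd throughout; the even case is the mirror image with the roles of the two factors swapped.

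For item (1), note first that \( \{T>\tau_{i-1}\} \) is the disjoint union of those \( \Gamma^{*}\in\xi_{i-1} \) that are not contained in any \( \bar\eta_{j} \), \( j\leq i-1 \), so it suffices to establish \( \bar m_{0}(\bar\eta_{i}\cap\Gamma^{*})\geq\varepsilon_{0}\,\bar m_{0}(\Gamma^{*}) \) for each such \( \Gamma^{*} \). Fix one. Because \( i \) is odd, \( \xi_{i} \) refines \( \Gamma^{*} \) only in the first coordinate, so each \( \Gamma\in\xi_{i} \) with \( \Gamma\subset\Gamma^{*} \) satisfies \( \pi'\Gamma=\pi'\Gamma^{*} \), and \( F^{\tau_{i}} \) maps \( \pi\Gamma \) bijectively onto \( \Delta_{0} \). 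From the construction of \( \bar\eta_{i} \), \( \bar\eta_{i}\cap\Gamma = \pi\Gamma\times(\pi'\Gamma^{*}\cap F'^{-\tau_{i}(\Gamma)}(\Delta_{0}')) \), so summing over \( \Gamma\subset\Gamma^{*} \) via the product structure of \( \bar m_{0} \), the claim reduces to the uniform one-coordinate bound
\[
\frac{m_{0}'\bigl(\pi'\Gamma^{*}\cap F'^{-\tau_{i}(\Gamma)}(\Delta_{0}')\bigr)}{m_{0}'(\pi'\Gamma^{*})}\ \geq\ \varepsilon_{0}.
\]
At the previous (even) step, Lemma \ref{abtjoin} applied to the \( F' \)-system gives that \( F'^{\tau_{i-1}}:\pi'\Gamma^{*}\to\Delta_{0}' \) is a bijection, and (G3) yields that its Jacobian is comparable to a constant. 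Pushing forward by this bijection the quotient above becomes comparable to \( m_{0}'(\Delta_{0}'\cap F'^{-(\tau_{i}(\Gamma)-\tau_{i-1})}(\Delta_{0}'))/m_{0}'(\Delta_{0}') \), which by \eqref{mixing} and Remark \ref{n0} is at least \( c/m_{0}'(\Delta_{0}') \).

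For item (2), fix \( \Gamma\in\xi_{i} \); then \( \tau_{i} \) is constant on \( \Gamma \), say equal to \( N \), and by \eqref{tau} the quantity \( \tau_{i+1}-\tau_{i}=n_{0}+\hat R'({F'}^{n_{0}+N}x') \) depends only on the second coordinate, so
\[
\bar m_{0}\{\tau_{i+1}-\tau_{i}\geq n\mid\Gamma\}=\frac{m_{0}'\bigl(\{x'\in\pi'\Gamma:\hat R'({F'}^{n_{0}+N}(x'))\geq n-n_{0}\}\bigr)}{m_{0}'(\pi'\Gamma)}.
\]
Again \( F'^{\tau_{i-1}}:\pi'\Gamma\to\Delta_{0}' \) is a bijection with Jacobian comparable to a constant by (G3), so transferring to \( \Delta_{0}' \) and setting \( \ell=n_{0}+N-\tau_{i-1}\geq 2n_{0} \) shows that the above quotient is bounded by a constant multiple of
\[
\frac{m_{0}'\bigl(\Delta_{0}'\cap F'^{-\ell}\{\hat R'\geq n-n_{0}\}\bigr)}{m_{0}'(\Delta_{0}')}\ \leq\ C\,m'\{\hat R'\geq n-n_{0}\},
\]
where the last inequality follows from the \( F' \)-invariance of the mixing probability \( \mu' \) and its bounded-density equivalence to \( m' \). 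Combining this with the identity \( m'\{\hat R'\geq k\}=\sum_{j\geq k}m_{0}'\{R'>j\} \), which is immediate from \eqref{tower1} and the definition of \( m' \), together with \eqref{M} and \eqref{mbar1}, yields the bound \( \leq K_{0}\bm_{n-n_{0}} \).

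The main technical point that will require care is the \emph{uniformity} of the bounded-distortion constant for \( F'^{\tau_{i-1}} \) restricted to \( \pi'\Gamma^{*} \) (and likewise \( F^{\tau_{i-1}} \) in the even case): the iterate length depends on \( i \) and on the specific element, yet the distortion constant must not. This should follow from a telescoping application of (G3) along the chain of successive first returns making up \( \tau_{i-1} \), since the separation time grows with each additional return. All other ingredients — \eqref{mixing}, Remark \ref{n0}, the tail formula for \( \hat R' \) on the tower, and the product structure of \( \bar m_{0} \) — are used as black boxes.
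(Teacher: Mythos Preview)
Your approach is essentially the paper's: reduce to one coordinate via the product structure, push forward through the bijection $F'^{\tau_{i-1}}:\pi'\Gamma\to\Delta_0'$ (resp.\ $F^{\tau_{i-1}}$ in the even case) with uniformly bounded Jacobian distortion, and then apply either the mixing inequality \eqref{mixing} for item (1) or the tower tail identity for item (2). The uniform distortion bound you flag at the end is exactly what the paper packages as Corollary~\ref{dist}, and your telescoping argument is the right way to get it.

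One bookkeeping correction is needed in your reduction for item (1). The set $\{T>\tau_{i-1}\}$ is \emph{not} a disjoint union of elements $\Gamma^{*}\in\xi_{i-1}$: the family $\bar\eta_{i-1}$ cuts each such $\Gamma^{*}$ along the first coordinate (for $i-1$ even), so a typical $\Gamma^{*}$ meets both $\{T=\tau_{i-1}\}$ and $\{T>\tau_{i-1}\}$. What is true, and what the paper uses, is that $\{T>\tau_{i-1}\}$ is a union of elements of $\xi_i$. Your one-coordinate bound is already stated and proved for each $\Gamma\in\xi_i$ (it depends on $\tau_i(\Gamma)$), so the fix is simply to drop the $\Gamma^{*}$ layer and reduce directly to $\Gamma\in\xi_i$ with $T|_\Gamma>\tau_{i-1}$, as the paper does. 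For item (2), your route through the invariant measure $\mu'$ to bound $m'_0(\Delta_0'\cap F'^{-\ell}A)$ is a valid alternative to the paper's direct pushforward-density estimate (Lemma~\ref{m0}); both give the same conclusion.
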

Recall that we have set \(  T=\infty  \) on the complement of points belonging to some element of \(  \bar\eta  \), and notice that \(  \{T>\tau_{i-1}\} \) is the set of points in \(  \bar\Delta_{0}  \) which do not belong to any elements of \(  \eta_{j}  \) for any \(  j=1,..., i-1  \) (by some slight abuse of notation we could write this as \(   \{T>\tau_{i-1}\}  =\bar\Delta_{0}\setminus \bigcup_{j=1}^{i-1}\bar\eta_{j}   \)). As an immediate consequence of item (1) we get the statement that \(  T  \) is finite for almost every point in \(  \bar\Delta_{0}  \) and therefore the collection of sets \(  \bar\eta  \) as in \eqref{etabar} is indeed a partition of \(  \bar\Delta_{0}  \) mod 0.

The proof of Proposition \ref{key} relies on some standard combinatorial estimates which, for completeness and to avoid interrupting the flow of the calculations,  we include in Appendix \ref{appendixA}.

\begin{proof}[Proof of (1)]
By construction,  $\{T>\tau_{i-1}\}$ is a union of
elements of the partition $\xi_i.$ Thus
  $$\bar{m}_0\{T=\tau_i|T>\tau_{i-1}\}=\frac{1}{\bar{m}_0\{T>\tau_{i-1}\}}
 \sum_{\Gamma\in\xi_i, T|_{\Gamma}>\tau_{i-1}}\bar{m}_0\{\{T=\tau_i\}\cap\Gamma\}.$$
Thus it is sufficient to prove
 $\bar{m}_0\{T=\tau_i|\Gamma\}\geq \varepsilon_0$
 for any $\Gamma\in\xi_i$ on which $T|_{\Gamma}>\tau_{i-1}.$
Assume for a moment $i$ is even and let $\Omega =\pi(\Gamma),$ $\Omega'=\pi'(\Gamma).$ Then by construction
$\pi'(\{T=\tau_i\}\cap\Gamma)=\Omega'$ and
\[
 \bar{m}_0\{T=\tau_i|\Gamma\}=\frac{\bar{m}_0(\{T=\tau_i\}\cap \Gamma)}{\bar{m}_0(\Gamma)}=
 \frac{m_0(\Omega\cap F^{-\tau_i}\Delta_0)m_0'(\Omega')}{m_0(\Omega)m_0'(\Omega')}=
 \frac{m_0(\Omega\cap F^{-\tau_i}\Delta_0)}{m_0(\Omega)}.
\]
Now recall that  $F^{\tau_{i-1}}(\Omega)=\Delta_0$, which implies
$\Omega\cap F^{-\tau_{i-1}}\Delta_0=\Omega$ and therefore
\[
 \frac{m_0(\Omega\cap F^{-\tau_i}\Delta_0)}{m_0(\Omega)} =\frac{m_0(\Omega\cap F^{-\tau_{i-1}}
 (\Delta_0\cap F^{\tau_{i-1}-\tau_i}\Delta_0))}{m_0(\Omega)}  =
 F^{\tau_{i-1}}_\ast(m_0|\Omega)(\Delta_0\cap F^{\tau_{i-1}-\tau_i}\Delta_0).
\]
Notice that  $m_0(\Delta_0\cap F^{\tau_{i-1}-\tau_i}\Delta_0)\geq c>0$ since
 $\tau_i-\tau_{i-1}\geq n_0.$
Letting $\nu=F_\ast^{\tau_{i-1}}(m_0|\Omega),$ applying Corollary \ref{dist} with $n=\tau_{i-1}$ for $x,y\in \Delta_0$ we get
\begin{equation*}\begin{split}
\bar{m}_0\{T=\tau_i|\Gamma\}= \nu(\Delta_0\cap F^{\tau_{i-1}-\tau_i}\Delta_0)\geq
\frac{m_0(\Delta_0\cap F^{\tau_{i-1}-\tau_i}\Delta_0)}
{(1+D)m_0(\Delta_0)}\ge \frac{c}{(1+D)m_0(\Delta_0)}.
\end{split}
\end{equation*}
For odd $i$'s we can just change $F$ to $F'$ and  do all the calculations, that gives the estimate
\begin{equation*}
\bar{m}_0\{T=\tau_i|\Gamma\}>\frac{c}{(1+D')m_0'(\Delta_0')}.
\end{equation*}
Taking $\varepsilon_0=c
\min\{\frac{1}{(1+D)m_0(\Delta_0)}, \frac{1}{(1+D')m_0'(\Delta_0)'}\}$
 we get the assertion.
   \end{proof}

\begin{proof}[Proof of (2)]
Assume for a moment $i$ is even and let, as above, $\Omega=\pi(\Gamma),$
$\Omega'=\pi'(\Gamma).$  Since $\tau_i$ is
constant on the elements of $\xi_i$ we have
$$\pi\left(\{\bar{x}=(x,x')|\hat{R}\circ F^{\tau_i+n_0}(x)>n\}\cap\Gamma\right)=
\{x|\hat{R}\circ F^{\tau_i+n_0}(x)>n\}\cap\Omega.$$
For convenience we begin by estimating \(  \bar{m}_0\{\tau_{i+1}-\tau_i-n_0>n|\Gamma\}   \).
From the definition of $\tau_i$, letting $\nu=F^{\tau_{i-1}}_\ast(m_0|_{}{\Omega})$,  we have
 \begin{align*}
  \bar{m}_0\{\tau_{i+1}-\tau_i-n_0>n|\Gamma\}
  &=\bar{m}_0\{\hat{R}\circ F^{\tau_i+n_0}>n|\Gamma\}
=\frac{\bar{m}(\{\hat{R}\circ F^{\tau_i+n_0}>n\}\cap \Gamma)}{\bar{m}(\Gamma)}
 \\ &=\frac{m_0'(\Omega')m_0(\pi\{\hat{R}\circ F^{\tau_i+n_0}>n\}\cap\Omega)}
  {m_0(\Omega)m_0'(\Omega')}
  \\
  &=
  \frac{m_0(\{\hat{R}\circ F^{\tau_i+n_0}>n\}\cap\Omega)}{m_0(\Omega)}
=m_0\{\hat{R}\circ F^{\tau_i+n_0}>n|\Omega\}
  \\& = F^{\tau_i+n_0}_\ast m_0\{\hat{R}>n|\Omega\}
  =F^{\tau_i-\tau_{i-1}+n_0}_\ast \nu\{\hat{R}>n\}.
 \end{align*}
 To bound the final term in terms of
 \(  m_0\{\hat{R}>n\}  \) it is sufficient to show that the density of \(  F^{\tau_i-\tau_{i-1}+n_0}_\ast \nu \) with respect to \(  m_{0}  \) is uniformly bounded in \(  i \).  We write first
\begin{equation*}
\frac{dF^k_\ast\nu}{dm}(x)=
\frac{dF^k_\ast\nu}{dF^k_\ast m_{0}}
\frac{dF^k_\ast m_{0}}{dm}(x)
=
\sum_{x_0\in F^{-k}(x)}\frac{\frac{d\nu}{dm_0}(x_0)}{JF^n(x_0)}\leq
\left\|\frac{d\nu}{dm_0}\right\|_\infty \left\|\frac{dF^k_\ast m_0}{dm_0}\right\|_\infty.
\end{equation*}
The second factor is bounded by $M_0$ from Lemma \ref{m0}. Let
us estimate the first one. Note that,
$\frac{d\nu}{dm_0}(x)=\frac{1}{JF^{\tau_{i-1}}x_0},$ where
$x_0=(F^{\tau_{i-1}}|\Omega)^{-1}(x).$  Corollary  \ref{dist} implies that
${\left\| {d\nu}/{dm}\right\|}_\infty
\leq  (1+D)/{m_0(\Delta_0)}$
and so we get
$$
\bar{m}_0\{\tau_{i+1}-\tau_i>n_0+n|\Gamma\}=  \bar{m}_0\{\tau_{i+1}-\tau_i-n_0>n|\Gamma\}
\le M_0\frac{1+D}{m_0(\Delta_0)}m_0\{\hat{R}>n\}.
$$
For \(  n>n_{0}  \), and using the definition of \(  \hat R  \), we can write this is
$$
\bar{m}_0\{\tau_{i+1}-\tau_i>n|\Gamma\}
\le M_0\frac{1+D}{m_0(\Delta_0)}m_0\{\hat{R}>n-n_{0}\} =
M_0\frac{1+D}{m_0(\Delta_0)} \sum_{i\ge n-n_{0}}m_0\{R>i\}.
$$
For $i$ odd the calculation is exactly the same and we get
$$
\bar{m}_0\{\tau_{i+1}-\tau_i>n|\Gamma\}
\le M_0'\frac{1+D'}{m_0'(\Delta_0')}m_0'\sum_{i\ge n-n_{0}}m_0'\{R'>i\}.
$$
Letting
$
K_0=\max\left\{\frac{M_0(D+1)}{m_0(\Delta_0)},
\frac{M_0'(D'+1)}{m_0'(\Delta_0')}\right\}
$
and using \eqref{M} we get the assertion.
 \end{proof}


\section{Rates of decay}\label{pfnew}
We now fix some arbitrary \(  n\geq 1  \) and estimate $\bar{m}_0\{T > n\}$. Letting $\tau_0=0$ we write
 \be\label{tn} \bar{m}_0\{T>n\}=\sum_{i\geq 1}\bar m_0\{T>n; \tau_{i-1}\le n
<\tau_i\}.
\ee
We will estimate the right hand side of  \eqref{tn}  using different arguments depending on whether the decay of \(  \mathcal M_{n}  \) is exponential, stretched exponential or polynomial.

\subsection{Polynomial case} We suppose that  $\mathcal M_n=\mathcal O(n^{-\alpha})$ for some $\alpha>2$ and prove that   \be\label{eq:Tn}
\bar{m}_0\{T>n\}=\mo(n^{1-\alpha})
\ee
Let $K= 2\max\{\bar{m}_0(\bar{\Delta}_0), K_0, m_0(\Delta_0), m'(\Delta_0')\}$ and \(  \varepsilon_{0}>0  \) given by Proposition \ref{key}. We start with the following somewhat unwieldy estimate.

\begin{proposition}\label{estm} For  any  $n\in\mathbb{N}$
\begin{equation}\label{eq:estm}
\bar{m}_0\{T>n\}\leq K\sum_{i\leq\frac{1}{2}\left[\frac{n}{n_0}\right]}i(1-\varepsilon_0)^{i-3}
\bm_{\left[\frac{n}{i}\right]-n_0}
+\bar{m}_0(\bar{\Delta}_0)(1-\varepsilon_0)^{\frac{1}{2}\left[\frac{n}{n_0}\right]-1}.
\end{equation}
\end{proposition}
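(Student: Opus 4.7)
The plan is to split the series in \eqref{tn} at \(  i_{*}:=\tfrac{1}{2}[n/n_{0}]  \) and bound the two halves separately. For the tail range \(  i>i_{*}  \), the uniform lower bound \(  \tau_{k}-\tau_{k-1}\ge n_{0}  \) from Remark \ref{n0} gives \(  \tau_{k}\ge kn_{0}  \), so on the event \(  \{T>n,\,\tau_{i-1}\le n<\tau_{i}\}  \) with \(  i>i_{*}  \) we have \(  \tau_{i_{*}}\le\tau_{i-1}\le n<T  \). Since these events are pairwise disjoint in \(  i  \), their union is contained in \(  \{T>\tau_{i_{*}}\}  \); iterating the inequality \(  \bar m_{0}\{T>\tau_{k}\}\le(1-\varepsilon_{0})\bar m_{0}\{T>\tau_{k-1}\}  \), which is immediate from item (1) of Proposition \ref{key}, from \(  k=2  \) to \(  k=i_{*}  \) gives the bound \((1-\varepsilon_{0})^{i_{*}-1}\bar m_{0}(\bar\Delta_{0})\), which is the second term of \eqref{eq:estm}.

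For the initial range \(  1\le i\le i_{*}  \), writing \(  \tau_{i}=\sum_{j=1}^{i}(\tau_{j}-\tau_{j-1})  \), if \(  \tau_{i}>n  \) then at least one gap must exceed \(  n/i  \), so
\[
\{T>n,\,\tau_{i-1}\le n<\tau_{i}\}\subseteq\bigcup_{j=1}^{i}\{T>\tau_{i-1},\,\tau_{j}-\tau_{j-1}>n/i\}.
\]
Each term in this union would be estimated by a two-level conditioning that exploits the nested refinement \(  \xi_{j-1}\prec\xi_{j}  \). First, decompose over \(  \xi_{j}  \): since \(  \tau_{j}-\tau_{j-1}  \) is constant on each \(  \Gamma\in\xi_{j}  \), and only those \(  \Gamma\subset\{T>\tau_{j-1}\}  \) can intersect \(  \{T>\tau_{i-1}\}  \) (on any \(  \Gamma\in\bar\eta_{k}  \) with \(  k<i  \) one already has \(  T=\tau_{k}\le\tau_{i-1}  \)), iterating the sharp per-element version of item (1) established in its proof a total of \(  i-j  \) times yields \(  \bar m_{0}(\{T>\tau_{i-1}\}\cap\Gamma)\le(1-\varepsilon_{0})^{i-j}\bar m_{0}(\Gamma)  \), and summing over such \(  \Gamma  \) gives
\[
\bar m_{0}\{T>\tau_{i-1},\,\tau_{j}-\tau_{j-1}>n/i\}\le(1-\varepsilon_{0})^{i-j}\,\bar m_{0}\{T>\tau_{j-1},\,\tau_{j}-\tau_{j-1}>n/i\}.
\]
Second, decompose the right-hand side over \(  \xi_{j-1}  \), apply item (2) of Proposition \ref{key} on each element to bound the conditional probability of \(  \{\tau_{j}-\tau_{j-1}>n/i\}  \) by \(  K_{0}\bm_{[n/i]-n_{0}}  \), and iterate item (1) once more to bound \(  \bar m_{0}\{T>\tau_{j-1}\}\le(1-\varepsilon_{0})^{j-2}\bar m_{0}(\bar\Delta_{0})  \); the two decay factors multiply to \(  (1-\varepsilon_{0})^{i-2}\le(1-\varepsilon_{0})^{i-3}  \). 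Summing over the \(  i  \) values of \(  j  \) introduces the factor \(  i  \), and summing over \(  i\le i_{*}  \) with all multiplicative constants absorbed into \(  K  \) produces the first term of \eqref{eq:estm}.

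The main technical point is the compatibility of items (1) and (2) under the refinement \(  \xi_{j-1}\prec\xi_{j}  \): iterated item (1) must supply the exponential decay in \(  i  \) coming from repeated failure of early return, while item (2) must independently supply the tail \(  \bm_{[n/i]-n_{0}}  \) from a single large gap, and both have to act on the correct sub-\(  \sigma  \)-algebra. The only minor subtlety is the case \(  j=1  \), for which item (2) is not literally stated; the same density estimate used in its proof (via Lemma \ref{m0} and Corollary \ref{dist}) provides the analogous bound on \(  \bar m_{0}\{\tau_{1}>n/i\}  \), which is absorbed into the same constant \(  K  \).
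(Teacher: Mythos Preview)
Your proposal is correct and follows essentially the same route as the paper: split \eqref{tn} at \(i_*=\tfrac12[n/n_0]\), bound the tail via iteration of item (1) of Proposition~\ref{key}, and for \(i\le i_*\) use the pigeonhole observation that some gap \(\tau_j-\tau_{j-1}\) exceeds \(n/i\), then combine item (2) for that gap with repeated item (1) on both sides to produce the factor \((1-\varepsilon_0)^{i-3}\). The paper organises the latter via the explicit product \(Y_1\cdot Y_2\cdot Y_3\) (Lemma~\ref{I}) and treats \(i=1,2\) separately in Lemma~\ref{c12}, but the substance is the same.

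One small point worth tightening: when you ``decompose the right-hand side over \(\xi_{j-1}\)'' to apply item (2), note that \(\{T>\tau_{j-1}\}\) is a union of elements of \(\xi_j\), not of \(\xi_{j-1}\), so item (2) does not directly give a bound conditional on \(\{T>\tau_{j-1}\}\cap\Gamma\) for \(\Gamma\in\xi_{j-1}\). The paper handles this by conditioning instead on \(\{T>\tau_{j-2}\}\), which \emph{is} \(\xi_{j-1}\)-measurable, at the price of one power of \((1-\varepsilon_0)\) (yielding \((1-\varepsilon_0)^{j-3}\) rather than your \((1-\varepsilon_0)^{j-2}\)). Since you already relax to \((1-\varepsilon_0)^{i-3}\) at the end, this costs nothing in the final estimate, but the intermediate step should be phrased with \(\tau_{j-2}\) to be fully justified.
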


Before proving Proposition \ref{estm} we show how it implies \eqref{eq:Tn}.
\begin{proof}[Proof of Theorem in the polynomial case assuming Proposition \ref{estm}]
By the definition of $\bm_{\left[\frac{n}{i}\right]-n_0}$ in \eqref{mbar1} and the assumption on the asymptotics of \(  \mathcal M_{n}  \), there exists a constant \(  C>0   \) such that
for every $i\leq \frac{1}{2}\left[\frac{n}{n_0}\right]$ we have
  \begin{equation*}\begin{split}
  \bm_{\left[\frac{n}{i}\right]-n_0}=
  \sum_{j\ge {\left[\frac{n}{i}\right]-n_0}}\mathcal{M}_j\le C\sum_{j\ge {\left[\frac{n}{i}\right]-n_0}}j^\alpha\le
  C\int_{\left[\frac{n}{i}\right]-n_0}^\infty x^{-\alpha}dx\\
  \le C\frac{i^{\alpha-1}}{n^{\alpha-1}}\left(\frac{n}{n-i(n_0+1)}\right)^{\alpha-1}\le
 C\frac{i^{\alpha-1}}{n^{\alpha-1}}.
 \end{split}
 \end{equation*}
Substituting this into the statement of Proposition  \ref{estm} we get
\be\label{eq:m0Tn}
\bar{m}_0\{T>n\}\leq  \frac{KC}{n^{\alpha-1}}\sum_{i\leq\frac{1}{2}\left[\frac{n}{n_0}\right]}
i^\alpha(1-\varepsilon_0)^{i-3}+\bar{m}_0(\bar{\Delta}_0)(1-\varepsilon_0)^{
\frac{1}{2}\left[\frac{n}{n_0}\right]-1}.
\ee
 Since the series $\sum_{i=1}^{\infty}(1-\varepsilon_0)^{i-3}i^{\alpha}$ is convergent and the
 second term in \eqref{eq:m0Tn} is  exponentially small in $n,$ we get  \eqref{eq:Tn} and thus the statement of the Theorem in the polynomial case.
\end{proof}

The proof of Proposition \ref{estm} will be broken into several lemmas.
Note that
$$
\sum_{i>\frac{1}{2}\left[\frac{n}{n_0}\right]} \bar{m}_0\{T>n; \,\,
\tau_{i-1}\le n <\tau_i\}\le \bar{m}_0\{T>n;\,\,
\tau_{\frac{1}{2}\left[\frac{n}{n_0}\right]}\leq n\},
$$
which together with \eqref{tn} implies
\begin{equation}\label{Tn}
\bar{m}_0\{T>n\} \le \sum_{i\leq\frac{1}{2}\left[\frac{n}{n_0}\right]} \bar{m}_0
\{T>n;\,\, \tau_{i-1}\leq n<\tau_i\}+\bar{m}_0\{T>n;\,\,
\tau_{\frac{1}{2}\left[\frac{n}{n_0}\right]}\leq n\}.
\end{equation}

First we estimate  the second summand of \eqref{Tn}.
\begin{lemma}\label{II}
 For every $n>2n_0$  and for $\varepsilon_0>0$ as in Proposition \ref{key}
$$\bar{m}_0\{T>n;\,\, \tau_{\frac{1}{2}\left[\frac{n}{n_0}\right]}\leq n\}\leq
\bar{m}_0(\bar{\Delta}_0)(1-\varepsilon_0)^{\frac{1}{2}\left[\frac{n}{n_0}\right]-1}.$$
\end{lemma}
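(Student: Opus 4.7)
The plan is to use Proposition \ref{key}(1) iteratively, after first reducing the event under consideration to a simple survival event for $T$ at a fixed step in the sequence $\{\tau_i\}$.

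First I would observe that the event being measured is contained in a much simpler one. Writing $k=\tfrac{1}{2}[n/n_0]$, the hypothesis $n>2n_0$ ensures $k\ge 1$, and on the event $\{T>n;\,\tau_k\le n\}$ we have $T>n\ge \tau_k$, so
\[
\{T>n;\,\tau_k\le n\}\subset\{T>\tau_k\}.
\]
Hence it is enough to show $\bar m_0\{T>\tau_k\}\le \bar m_0(\bar\Delta_0)(1-\varepsilon_0)^{k-1}$.

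Next I would iterate the conditional lower bound from Proposition \ref{key}(1). That proposition, applied for $i\ge 2$, gives $\bar m_0\{T=\tau_i\mid T>\tau_{i-1}\}\ge\varepsilon_0$, which is equivalent to
\[
\bar m_0\{T>\tau_i\mid T>\tau_{i-1}\}\le 1-\varepsilon_0,
\]
and therefore $\bar m_0\{T>\tau_i\}\le(1-\varepsilon_0)\,\bar m_0\{T>\tau_{i-1}\}$. Chaining this estimate from $i=2$ up to $i=k$ yields
\[
\bar m_0\{T>\tau_k\}\le(1-\varepsilon_0)^{k-1}\bar m_0\{T>\tau_1\}\le(1-\varepsilon_0)^{k-1}\bar m_0(\bar\Delta_0).
\]

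Substituting $k=\tfrac{1}{2}[n/n_0]$ gives exactly the bound claimed in the Lemma. There is no real obstacle here: the only delicate points are the set-theoretic inclusion in the first step (which uses that $T>n$ forces $T>\tau_k$ whenever $\tau_k\le n$) and the fact that Proposition \ref{key}(1) is stated only for $i\ge 2$, so the telescoping ends at $\bar m_0\{T>\tau_1\}$, which is trivially bounded by $\bar m_0(\bar\Delta_0)$. The proof should fit in a few lines.
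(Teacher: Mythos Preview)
Your proposal is correct and follows essentially the same argument as the paper: reduce to $\bar m_0\{T>\tau_k\}$ via the inclusion $\{T>n;\,\tau_k\le n\}\subset\{T>\tau_k\}$, then telescope using Proposition~\ref{key}(1) for $i\ge 2$ and bound the initial factor $\bar m_0\{T>\tau_1\}$ trivially by $\bar m_0(\bar\Delta_0)$. The paper's proof is exactly this, written out in one displayed line.
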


\begin{proof}
Since $T>n>\tau_{i-1}$ we have
\begin{align*}
\bar{m}_0\{T>n;\,\, \tau_{\frac{1}{2}\left[\frac{n}{n_0}\right]}\leq n\} \leq
\bar{m}_0\{T>\tau_{\frac{1}{2}\left[\frac{n}{n_0}\right]}\}=
\\
\bar{m}_0\{T>\tau_1\}\bar{m}_0\{T>\tau_2|\,\, T>\tau_1\}...
\bar{m}_0\{T>\tau_{\frac{1}{2}\left[\frac{n}{n_0}\right]}|\,\, T>\tau_{\frac{1}{2}\left[\frac{n}{n_0}\right]-1}\}.
\end{align*}

Notice that  $\bar{m}_0\{T>\tau_1\}\le \bar{m}_0(\bar{\Delta}_0).$
The first item of Proposition \ref{key} implies that  each of the other terms is less than $1-\varepsilon_0.$ Substituting these into the above equation  finishes the proof.
\end{proof}

Now, we begin estimating  the first summand of \eqref{Tn}.
Start with the cases $i=1, 2.$
\begin{lemma}\label{c12}
For $i=1,2$ and every $n>n_0$ we have
$$\bar{m}_0\{T>\tau_{i-1}; \tau_{i-1}\le n< \tau_i\}\le K
 \bar{\mathcal M}_{\left[\frac{n}{2}\right]-n_0}.$$
\end{lemma}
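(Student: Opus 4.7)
The plan is to treat the cases \(i=1\) and \(i=2\) separately, using in each case that \(\tau_1\) depends only on the first coordinate \(x\) and that \(\tau_2-\tau_1\) depends only on the second coordinate \(x'\) together with the (first-coordinate only) value of \(\tau_1\). This one-variable dependence reduces both bounds to an application of the pushforward density estimate from Lemma \ref{m0} and Corollary \ref{dist} — exactly the mechanism used in the proof of Proposition \ref{key}(2).

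First I handle \(i=1\). Since \(\tau_0=0\), the event becomes \(\{\tau_1>n\}\), and recalling \(\tau_1(\bar x)=n_0+\hat R(F^{n_0}x)\) this is \(\{\hat R\circ F^{n_0}>n-n_0\}\times\Delta_0'\). Thus
\[
\bar m_0\{\tau_1>n\}=m_0'(\Delta_0')\cdot m_0\{\hat R\circ F^{n_0}>n-n_0\}=m_0'(\Delta_0')\cdot F^{n_0}_\ast m_0\{\hat R>n-n_0\}.
\]
By Lemma \ref{m0}, the density of \(F^{n_0}_\ast m_0\) with respect to \(m\) is bounded by \(M_0\). The standard identity on towers gives \(m\{\hat R>k\}=\sum_{j>k}m_0\{R>j\}\le\bar{\mathcal M}_k\), so we obtain \(\bar m_0\{\tau_1>n\}\le M_0 m_0'(\Delta_0')\,\bar{\mathcal M}_{n-n_0}\le M_0 m_0'(\Delta_0')\,\bar{\mathcal M}_{[n/2]-n_0}\) since \(\bar{\mathcal M}\) is monotone decreasing. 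With the definition of \(K\) this gives the result for \(i=1\).

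For \(i=2\) I discard the redundant condition \(T>\tau_1\) (only enlarges the measure) and split \(\{\tau_1\le n<\tau_2\}\) into two pieces according to whether \(\tau_1>n/2\) or \(\tau_1\le n/2\). The first piece is controlled by the \(i=1\) bound applied at \(n/2\): \(\bar m_0\{\tau_1>n/2\}\le M_0 m_0'(\Delta_0')\,\bar{\mathcal M}_{[n/2]-n_0}\). On the second piece, \(\tau_1\le n/2\) together with \(\tau_2>n\) forces
\[
\hat R'(F'^{n_0+\tau_1}x')=\tau_2-\tau_1-n_0\;>\;n-n_0-\tfrac{n}{2}\;\ge\;\tfrac{n}{2}-n_0.
\]
Since \(\tau_1\) is a function of \(x\) alone, Fubini gives
\[
\bar m_0\{\tau_1\le n/2,\tau_2>n\}\le\int_{\Delta_0}m_0'\bigl\{\hat R'\circ F'^{n_0+\tau_1(x)}>[n/2]-n_0\bigr\}\,dm_0(x),
\]
and for each fixed \(x\) the inner integral equals \(F'^{n_0+\tau_1(x)}_\ast m_0'\{\hat R'>[n/2]-n_0\}\le M_0'\,\bar{\mathcal M}_{[n/2]-n_0}\) again by Lemma \ref{m0} (applied to the second system) and the tower identity for \(\hat R'\). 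Integrating, the second piece is bounded by \(M_0'\,m_0(\Delta_0)\,\bar{\mathcal M}_{[n/2]-n_0}\).

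Summing the two contributions yields \(\bar m_0\{T>\tau_1,\tau_1\le n<\tau_2\}\le\bigl[M_0 m_0'(\Delta_0')+M_0' m_0(\Delta_0)\bigr]\bar{\mathcal M}_{[n/2]-n_0}\), and the constant is bounded by \(K\) by its definition (absorbing the density constants into the same \(K_0\)-type bound used in Proposition \ref{key}). The only real subtlety is the split at \(n/2\): it is precisely what forces the exponent \([n/2]-n_0\) in the conclusion, the same factor of \(1/2\) that will later propagate into the factor \(1/i\) in the general case \(i\ge 3\). The rest is bookkeeping of the density constants already established for the proof of Proposition \ref{key}.
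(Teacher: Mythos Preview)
Your proof is correct and follows essentially the same route as the paper: for \(i=1\) you exploit that \(\tau_1\) depends only on the first coordinate, and for \(i=2\) you split at \(n/2\) into the events \(\{\tau_1>n/2\}\) and \(\{\tau_2-\tau_1>n/2\}\), bounding each separately. The only real difference is that the paper handles the second piece by a direct appeal to Proposition~\ref{key}(2) (applied with \(i=1\) over the partition \(\xi_1\)), whereas you re-run that argument by hand via Fubini and Lemma~\ref{m0}; the two are the same computation. One minor point: your final constant \(M_0 m_0'(\Delta_0')+M_0' m_0(\Delta_0)\) is not literally dominated by the paper's specific choice of \(K\), so the parenthetical about ``absorbing into the same \(K_0\)-type bound'' is a bit loose---but this is harmless since \(K\) is an auxiliary constant and can simply be enlarged.
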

\begin{proof}
 For $i=1$ we take advantage of the fact that \(  \tau_{1}  \) depends only on the first coordinate. Then we have
\begin{equation*}\begin{aligned}\bar{m}_0\{T>0;
\tau_1>n\} &=m_0\{\tau_1>n\}m_0'(\Delta_0')
\leq  m'_0(\Delta'_0)m\{\hat{R}>n-n_0\}\\
&\leq   m'_0(\Delta'_0)
\sum_{j\ge n-n_0}m_0\{R>j\}
\leq m'_0(\Delta'_0)\bar{\mathcal M}_{n-n_{0}}
.\end{aligned}
\end{equation*}
which proves the statement in this case by the definition of \(  K  \) and using the fact that \(  \bar{\mathcal M}_{k}  \) is monotone decreasing in \(  k \).
 For $i=2$ we have
\begin{align*}
\bar{m}_0\{T>\tau_1;\tau_1\leq n <\tau_2\}
& \leq \bar m_0\{\tau_2>n\}\le \bar m_0\{\tau_2-\tau_1\ge \frac n 2\}+\bar m_0\{\tau_1\ge \frac n 2\}.
\end{align*}
From the second item of Proposition \ref{key} we have
$$
\bar m_0\{\tau_2-\tau_1\ge \frac n 2\}\le K_0 \bar{\mathcal M}_{\left[\frac{n}{2}\right]-n_0}.
$$
The second item is estimated as in the case $i=1$ and so we get
\[
\bar{m}_0\{T>\tau_1;\tau_1\leq n <\tau_2\}\leq K_0 \bar{\mathcal M}_{\left[\frac{n}{2}\right]-n_0} +
m'_0(\Delta'_0)\bar{\mathcal M}_{\left[\frac n2\right]-n_{0}}
\leq K \bar{\mathcal M}_{\left[\frac n2\right]-n_{0}}.
\]
which completes the proof in this case also.
\end{proof}

We now consider the general case.

\begin{lemma}\label{I}
For each $i\geq 3$
$$\bar{m}_0\{T>n;\,\, \tau_{i-1}\leq n< \tau_i\}\leq \sum_{j=1}^i  K(1-\varepsilon_0)^{i-3}
\bm_{\left[\frac{n}{i}\right]-n_0}.
$$
\end{lemma}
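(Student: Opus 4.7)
The plan is to combine a pigeonhole decomposition with the two estimates in Proposition \ref{key}. Since $\sum_{j=1}^i(\tau_j-\tau_{j-1})=\tau_i$ and $\tau_i>n$ on the event $\{\tau_{i-1}\le n<\tau_i\}$, at least one gap $\tau_j-\tau_{j-1}$ must exceed $n/i$. Together with $T>n\ge\tau_{i-1}$, this yields the inclusion
\[
\{T>n;\,\tau_{i-1}\le n<\tau_i\}\ \subseteq\ \bigcup_{j=1}^i C_{i-1}\cap B_j,
\qquad C_k:=\{T>\tau_k\},\ B_j:=\{\tau_j-\tau_{j-1}>n/i\},
\]
so the lemma reduces to proving $\bar m_0(C_{i-1}\cap B_j)\le K(1-\varepsilon_0)^{i-3}\bm_{[n/i]-n_0}$ for each $j$ and summing.

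For $j$ in the generic range $2\le j\le i-1$, the main observation is that $B_j$ is measurable with respect to $\xi_j$ (as $\tau_{j-1},\tau_j$ are constant on its elements), so $C_k\cap B_j$ is a union of $\xi_k$-elements for every $k\ge j$. The proof of Proposition \ref{key}(1) actually delivers the stronger pointwise bound $\bar m_0\{T>\tau_k\mid\Gamma\}\le 1-\varepsilon_0$ on every $\Gamma\in\xi_k$ with $T|_\Gamma>\tau_{k-1}$; decomposing over $\xi_{k-1}$-parent sets then yields the one-step recursion $\bar m_0(C_k\cap B_j)\le(1-\varepsilon_0)\bar m_0(C_{k-1}\cap B_j)$ valid for all $k\ge j+1$. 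Iterating down to level $j$ produces a factor $(1-\varepsilon_0)^{i-1-j}$, after which I would bound $\bar m_0(C_j\cap B_j)\le\bar m_0(C_{j-1}\cap B_j)$ and apply Proposition \ref{key}(2) on each $\xi_{j-1}$-parent to extract $K_0\bm_{[n/i]-n_0}\cdot\bar m_0(C_{j-1})$. A further iteration of Proposition \ref{key}(1) gives $\bar m_0(C_{j-1})\le \bar m_0(\bar\Delta_0)(1-\varepsilon_0)^{j-2}$, so combining the two geometric factors produces the desired exponent $(i-1-j)+(j-2)=i-3$.

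The endpoints $j=1$ and $j=i$ require minor adjustments but both yield exponents $i-2\ge i-3$. For $j=i$ the recursion is vacuous and the geometric factor comes entirely from $\bar m_0(C_{i-1})\le\bar m_0(\bar\Delta_0)(1-\varepsilon_0)^{i-2}$. For $j=1$, Proposition \ref{key}(2) is inapplicable to the initial gap $\tau_1=\tau_1-\tau_0$, so the corresponding step is replaced by the direct estimate $\bar m_0(B_1)\le m'_0(\Delta'_0)\bm_{[n/i]-n_0}$ from the $i=1$ case of Lemma \ref{c12}. Absorbing all multiplicative constants into $K$ using its definition at the start of the section, and using $i\ge 3$ to unify the exponents, each of the $i$ summands is bounded by $K(1-\varepsilon_0)^{i-3}\bm_{[n/i]-n_0}$, and summing completes the proof. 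The main technical point is to recognise and use the \emph{pointwise} form of Proposition \ref{key}(1) on each $\xi_k$-element, not just its averaged consequence: because $B_j$ restricts the summation to only certain $\xi_{k-1}$-parents, an averaged inequality would not propagate through the recursion.
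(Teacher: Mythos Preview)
Your approach is essentially identical to the paper's: the same pigeonhole decomposition followed by a telescoping product of conditional probabilities, with Proposition~\ref{key}(1) supplying the geometric factors on either side of index~$j$ and Proposition~\ref{key}(2) supplying the $\bm$-factor at the gap. The paper packages this as $Y_1\cdot Y_2\cdot Y_3$ rather than as a recursion, but the content is the same.

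There is, however, one indexing slip in your bookkeeping. You write that you ``apply Proposition~\ref{key}(2) on each $\xi_{j-1}$-parent to extract $K_0\bm_{[n/i]-n_0}\cdot\bar m_0(C_{j-1})$''. But $C_{j-1}=\{T>\tau_{j-1}\}$ is \emph{not} a union of $\xi_{j-1}$-elements; by construction it is only a union of $\xi_j$-elements (inside each $\Gamma\in\xi_{j-1}$, the set $\{T=\tau_{j-1}\}\cap\Gamma$ is a proper subset). The $\xi_{j-1}$-parents of the pieces of $C_{j-1}$ fill out $C_{j-2}=\{T>\tau_{j-2}\}$, so what the parent-sum actually yields is $K_0\bm_{[n/i]-n_0}\cdot\bar m_0(C_{j-2})$, and then Proposition~\ref{key}(1) gives only $(1-\varepsilon_0)^{j-3}$, not $(1-\varepsilon_0)^{j-2}$. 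As written, your two exponents therefore combine to $i-4$, not $i-3$. The fix is simple: your one-step recursion $\bar m_0(C_k\cap B_j)\le(1-\varepsilon_0)\,\bar m_0(C_{k-1}\cap B_j)$ is in fact valid also at $k=j$ (since $C_{j-1}\cap B_j$ is $\xi_j$-measurable), so iterate one step further before dropping the $B_j$-constraint; this restores the missing factor and gives the correct exponent $(i-j)+(j-3)=i-3$, exactly as in the paper.
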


\begin{proof}
Since  $T>n\geq\tau_{i-1}$ we have
\begin{equation*}
\bar{m}_0\{T>n;\,\, \tau_{i-1}\leq n < \tau_i\} \leq
\bar{m}_0\{T>\tau_{i-1}; \,\, n<\tau_i\}.
\end{equation*}

Moreover, from
\(
\tau_i=\tau_i-\tau_{i-1}+\tau_{i-1}-\tau_{i-2}+...+\tau_1-\tau_0>n
\)
we obtain that  there is at least one $j\in[1, i]$ such that
 $\tau_j-\tau_{j-1}>\frac{n}{i}$ and therefore
 \[
\bar{m}_0\{T>n;\,\, \tau_{i-1}\leq n< \tau_i\}\leq
\sum_{j=1}^i
\bar{m}_0\{T>\tau_{i-1};\tau_j-\tau_{j-1}>\frac{n}{i}\}.
\]
 For each $i, j\ge 3$ we write
\begin{equation}\label{abc}
\bar{m}_0\{T>\tau_{i-1};\tau_j-\tau_{j-1}>\frac{n}{i}\}= Y_1\cdot Y_2\cdot Y_3
\end{equation}
where
$$
Y_1:=\bar{m}_0\{T>\tau_{j-1}; \tau_j-\tau_{j-1}>\frac{n}{i}|T>\tau_{j-2}\},$$
$$
Y_2:=\bar{m}_0\{T>\tau_1\}\bar{m}_0\{T>\tau_2|T>\tau_1\} ... \bar{m}_0\{T>\tau_{j-2}|T>\tau_{j-3}\},
$$
$$
Y_3:=\bar{m}_0\{T>\tau_j|T>\tau_{j-1};\tau_j-\tau_{j-1}>\frac{n}{i}\}...\,\,
\bar{m}_0\{T>\tau_{i-1}|T>\tau_{i-2}; \tau_j-\tau_{j-1}>\frac{n}{i}\}.
$$
 By the second item of Proposition \ref{key} we have
\be\label{Y2}
Y_2\leq \bar{m}_0(\bar{\Delta}_0)(1-\varepsilon_0)^{j-3}.
\ee
For the first term, note that
$$Y_1:=\bar{m}_0\{T>\tau_{j-1}; \tau_j-\tau_{j-1}>\frac{n}{i}|T>\tau_{j-2}\}
\leq \bar{m}_0\{\tau_j-\tau_{j-1}>\frac{n}{i}| T>\tau_{j-2}\}.$$
By  construction  $\{T>\tau_{j-2}\}$
can be written as a union of elements of $\xi_{j-1}$ and so, by the second item of Proposition \ref{key},
\be\label{Y1}
Y_1\le K_0 \bm_{\left[\frac{n}{i}\right]-n_0}.
\ee
For the third term,
since $\tau_j$ and $\tau_{j-1}$ are constant on the elements of
$\xi_j,$ if
$\tau_j(\bar{x})-\tau_{j-1}(\bar{x})>\frac{n}{i}$ for some point $\bar{x},$ then it holds on  $\xi_j(\bar{x}).$
By construction, for $k\geq j$ the partition $\xi_k$ is finer than
$\xi_j$ and  $\{T>\tau_{k-1}\}$ can be written as a union of
elements of $\xi_k.$ Hence  $\{T>\tau_{k-1};
\tau_j-\tau_{j-1}>\frac{n}{i}\}$ can be covered with elements of $\xi_k.$ Using the first item of Proposition
\ref{key} in each partition element gives
$$\bar{m}_0\{T>\tau_k| \,\, T>\tau_{k-1}; \tau_j-\tau_{j-1}>\frac{n}{i}\}\leq 1-\varepsilon_0.$$
This immediately gives
 \be\label{Y3} Y_3\leq
(1-\varepsilon_0)^{i-j}.
\ee
Substituting  \eqref{Y2}, \eqref{Y1}, \eqref{Y3} into \eqref{abc} we get the assertion of Lemma 5.5.
For the case $i\geq 3$ and $j<3$ proof will be the same but only
without $Y_2.$
\end{proof}

Notice that substituting the estimates in the statements of Lemmas
\ref{c12} and \ref{I} into   \eqref{Tn} gives the
statement in  Proposition \ref{estm}.

\subsection{Super polynomial cases}\label{sppc} In this subsection the we give the proof of the tail estimates for exponential and stretched exponential cases. Let
$$A(i)=\{\textbf{k}=(k_1, ..., k_{i-1})\in\N^{i-1}: \sum_j k_j\le n,\,\,
k_j\ge n_0, j=1, ..., i-1\},
$$
\begin{equation}\label{mbar}
\bm(i,n)=\max_{\textbf{k}\in A(i)}
\bm_{n-\sum_j k_j-n_0}\prod_{j=1}^{i-1}\bm_{k_j-n_0}.
\end{equation}

\begin{remark}\label{N}It is known fact (see for example \cite{TTao})
that the cardinality $\text{card}A(i)$ of  $A(i)$ is bounded above by $\binom{n+i-n_0}{i-1}.$
\end{remark}

Let $\delta$ be a sufficiently small number, which will be specified later. We first prove the following technical statement from which both exponential and stretched exponential cases will follow.

\begin{proposition}\label{stnexp}
For sufficiently large $n$ and any $\theta'\in (0,1]$ we have
$$\bar{m}_0\{T>n\}\le \sum_{i\le [\delta n^{\theta'}]}\binom{n+i-n_0}{i-1}
K_0^i\bm(i,n) +
\bar{m}(\bar{\Delta}_0)(1-\varepsilon_0)^{[\delta n^{\theta'}]-1}.$$
\end{proposition}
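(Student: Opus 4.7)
The plan is to follow the same template as the polynomial case, starting from \eqref{tn} and splitting the sum at $i=[\delta n^{\theta'}]$:
\[
\bar m_0\{T>n\}=\sum_{i\le[\delta n^{\theta'}]}\bar m_0\{T>n;\tau_{i-1}\le n<\tau_i\}+\sum_{i>[\delta n^{\theta'}]}\bar m_0\{T>n;\tau_{i-1}\le n<\tau_i\}.
\]
The two sums give rise to the two terms in the statement of the proposition.

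For the second (tail) sum, every point contributing to it satisfies $T>\tau_{[\delta n^{\theta'}]-1}$. Writing this event as a telescoping product of conditional probabilities
\[
\bar m_0\{T>\tau_1\}\,\bar m_0\{T>\tau_2\mid T>\tau_1\}\cdots\bar m_0\{T>\tau_{[\delta n^{\theta'}]-1}\mid T>\tau_{[\delta n^{\theta'}]-2}\}
\]
and bounding each factor beyond the first by $1-\varepsilon_0$ using Proposition~\ref{key}(1), exactly as in Lemma~\ref{II}, gives the bound $\bar m(\bar\Delta_0)(1-\varepsilon_0)^{[\delta n^{\theta'}]-1}$.

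For the first sum, the key difference from the polynomial case is that we do not collapse to a ``single largest increment'' argument; instead, for each fixed $i\le[\delta n^{\theta'}]$ we decompose the event $\{\tau_{i-1}\le n<\tau_i\}$ by tracking the sizes of all successive increments $k_j:=\tau_j-\tau_{j-1}$ for $j=1,\dots,i-1$. By Remark~\ref{n0} each $k_j\ge n_0$, and since $\tau_{i-1}\le n$ the tuple $(k_1,\dots,k_{i-1})$ ranges over $A(i)$, with the remaining increment required to satisfy $\tau_i-\tau_{i-1}>n-\sum_j k_j$. For each $\textbf{k}\in A(i)$ we write the associated probability as a chain of conditional probabilities, conditioning successively on the previous increments; at step $j$ the event being conditioned on is a union of elements of $\xi_{j-1}$ because $\tau_1,\dots,\tau_{j-1}$ are constant on such elements by construction, and so Proposition~\ref{key}(2) applies, bounding the $j$-th factor by $K_0\bar{\mathcal M}_{k_j-n_0}$ and the last factor by $K_0\bar{\mathcal M}_{n-\sum_jk_j-n_0}$. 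Taking the maximum over $\textbf{k}\in A(i)$ yields the quantity $\bm(i,n)$ from \eqref{mbar}, and summing over $\textbf{k}$ contributes the cardinality factor $|A(i)|\le\binom{n+i-n_0}{i-1}$ from Remark~\ref{N}; thus each $i$ contributes at most $\binom{n+i-n_0}{i-1}K_0^i\bm(i,n)$, and summing over $i\le[\delta n^{\theta'}]$ gives the first term in the statement.

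The principal obstacle is verifying that the successive conditioning steps are compatible with Proposition~\ref{key}(2): namely, that at step $j$ the event $\{\tau_l-\tau_{l-1}=k_l\text{ for }l<j\}$ is indeed a union of elements of $\xi_{j-1}$. This is inherited from the inductive construction of the partitions $\xi_j$ in Section~\ref{tfprod}, where each $\tau_l$ is arranged to be constant on $\xi_l$. Once this bookkeeping is in place the estimate is essentially mechanical, and combining the two pieces completes the proof.
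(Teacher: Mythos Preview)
Your proposal is correct and follows essentially the same approach as the paper: the split \eqref{smain}, the telescoping bound on the tail (Lemma~\ref{exa}), and the increment-by-increment decomposition over $A(i)$ with Proposition~\ref{key}(2) applied to each conditional factor (Lemma~\ref{fixi}) are exactly what the paper does. One small slip: for $i>[\delta n^{\theta'}]$ you actually have $\tau_{[\delta n^{\theta'}]}\le\tau_{i-1}\le n<T$, so the telescoping product should run up to $\tau_{[\delta n^{\theta'}]}$ rather than $\tau_{[\delta n^{\theta'}]-1}$, giving the exponent $[\delta n^{\theta'}]-1$ as stated.
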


To prove Proposition \ref{stnexp} we first write, as in \eqref{Tn},
\be\label{smain}
\bar{m}_0\{T>n\}
 \le
\sum_{i\leq[\delta n^{\theta'}]}\bar{m}_0\{T>n; \tau_{i-1}\leq n <\tau_i\}+
\bar{m}_0\{T>n; \tau_{[\delta n^{\theta'}]}\leq n\}.
\ee
As in polynomial case, we start by estimating the second summand of  \eqref{smain}.
\begin{lemma}\label{exa}
For sufficiently large  $n$  we have
$$\bar{m}_0\{T>n; \tau_{[\delta n^{\theta'}]}\leq n\}\leq
 \bar{m}(\bar{\Delta}_0) (1-\varepsilon_0)^{[\delta n^{\theta'}]-1}.$$
\end{lemma}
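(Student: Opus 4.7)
The plan is to mimic exactly the argument of Lemma \ref{II} in the polynomial case, the only difference being that the cutoff index $\tfrac{1}{2}[n/n_0]$ is replaced by $[\delta n^{\theta'}]$. The hypothesis $\tau_{[\delta n^{\theta'}]} \leq n$ together with $T>n$ forces $T > \tau_{[\delta n^{\theta'}]}$, so monotonicity of measure gives
\[
\bar m_0\{T>n;\ \tau_{[\delta n^{\theta'}]}\le n\} \;\le\; \bar m_0\{T>\tau_{[\delta n^{\theta'}]}\}.
\]

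Next I would telescope the right-hand side as a product of conditional probabilities. Since the events $\{T>\tau_i\}$ are nested, I can write
\[
\bar m_0\{T>\tau_{[\delta n^{\theta'}]}\} \;=\; \bar m_0\{T>\tau_1\}\prod_{i=2}^{[\delta n^{\theta'}]} \bar m_0\{T>\tau_i \mid T>\tau_{i-1}\}.
\]
The first factor is trivially bounded by $\bar m_0(\bar\Delta_0)$ (or even $\bar m(\bar\Delta_0)$ as stated). For each of the remaining factors, the first item of Proposition \ref{key} gives $\bar m_0\{T=\tau_i\mid T>\tau_{i-1}\}\ge\varepsilon_0$, hence
\[
\bar m_0\{T>\tau_i \mid T>\tau_{i-1}\} \;=\; 1-\bar m_0\{T=\tau_i\mid T>\tau_{i-1}\} \;\le\; 1-\varepsilon_0.
\]
Multiplying together yields the claimed bound $\bar m(\bar\Delta_0)(1-\varepsilon_0)^{[\delta n^{\theta'}]-1}$.

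There is essentially no obstacle here — the argument is purely combinatorial and uses only item (1) of Proposition \ref{key}, already established. The hypothesis ``for sufficiently large $n$'' is only needed to ensure that $[\delta n^{\theta'}]\ge 2$ so that the telescoping contains at least one conditional factor; for smaller $n$ the statement is either vacuous or subsumed by the cruder bound $\bar m_0\{T>n;\tau_{[\delta n^{\theta'}]}\le n\}\le \bar m(\bar\Delta_0)$. The whole proof should fit in four or five lines and is parallel to the polynomial version.
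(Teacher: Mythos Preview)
Your proposal is correct and is exactly the approach the paper takes: the paper's proof of Lemma~\ref{exa} consists of the single sentence ``The proof is identical to the proof of Lemma~\ref{II},'' and your write-up reproduces that argument with the cutoff index changed from $\tfrac12[n/n_0]$ to $[\delta n^{\theta'}]$.
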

\begin{proof}
The proof is identical to the proof of Lemma \ref{II}.
\end{proof}

To estimate the second summand of \eqref{smain},
first we fix $i$ and prove the following
\begin{lemma}\label{fixi}
For sufficiently large \(  n  \), for every \(  i \leq[\delta n^{\theta'}]  \) we have
$$\bar{m}_0\{T>n; \tau_{i-1}\le n < \tau_i\}\le \bar{m}(\bar{\Delta}_0)K_0^i
\text{card} A(i)\bar{\mathcal M} (i,n).$$
\end{lemma}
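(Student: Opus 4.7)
The plan is to partition the event $\{T > n;\, \tau_{i-1} \le n < \tau_i\}$ according to the actual values of the inter-return gaps. Setting $k_j := \tau_j - \tau_{j-1}$ for $j = 1, \ldots, i-1$, each gap is at least $n_0$ by construction, and on our event $\tau_{i-1} = \sum_{j<i} k_j \le n$, so the tuple $\mathbf{k} = (k_1, \ldots, k_{i-1})$ ranges over $A(i)$; moreover the last gap satisfies $\tau_i - \tau_{i-1} > n - \sum_j k_j$. Since $T > n \ge \tau_{i-1}$ forces $T > \tau_{i-1}$, the event is contained in the disjoint union over $\mathbf k \in A(i)$ of
\[
E(\mathbf k) := \{T > \tau_{i-1};\ \tau_j - \tau_{j-1} = k_j \text{ for } j < i;\ \tau_i - \tau_{i-1} > n - \textstyle\sum_j k_j\}.
\]

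For fixed $\mathbf k$ I would estimate $\bar m_0(E(\mathbf k))$ by a telescoping application of Proposition \ref{key}(2). The key structural observation is that $\tau_j$ is constant on elements of $\xi_j$ and $\{T > \tau_{j-1}\}$ is a union of such elements, so the event ``the first $j$ gaps are exactly $k_1, \ldots, k_j$ and $T > \tau_{j-1}$'' is $\xi_j$-measurable. Conditioning on any $\Gamma \in \xi_{j-1}$ compatible with the specified gaps, Proposition \ref{key}(2) gives
\[
\bar m_0\{\tau_j - \tau_{j-1} = k_j \mid \Gamma\} \le \bar m_0\{\tau_j - \tau_{j-1} \ge k_j \mid \Gamma\} \le K_0\,\bar{\mathcal M}_{k_j - n_0}
\]
uniformly in $\Gamma$. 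Peeling off the factors in the order $j = i, i-1, \ldots, 1$ (with the last-gap factor contributing $K_0\,\bar{\mathcal M}_{n - \sum_j k_j - n_0}$) and using $\bar m_0(\bar\Delta_0)$ as the base case yields
\[
\bar m_0(E(\mathbf k)) \le \bar m_0(\bar\Delta_0)\, K_0^i\, \bar{\mathcal M}_{n - \sum_j k_j - n_0} \prod_{j=1}^{i-1} \bar{\mathcal M}_{k_j - n_0}.
\]
Summing over $\mathbf k \in A(i)$ and bounding each summand by the maximum $\bar{\mathcal M}(i,n)$ from \eqref{mbar} times $\mathrm{card}\,A(i)$ gives the claimed inequality.

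The main point to verify carefully is that the peeling argument is legitimate: on each step the event being conditioned must be a union of elements of the appropriate $\xi_j$, so that Proposition \ref{key}(2) delivers a $\Gamma$-uniform bound that can be factored out of the sum. A minor boundary issue arises when $n - \sum_j k_j$ is close to $n_0$ (where Proposition \ref{key}(2) is trivially satisfied with $\bar{\mathcal M}_0$ a fixed constant), but this is absorbed into $K_0$. I do not expect a deeper obstacle; this is essentially a refined, gap-by-gap version of the single-largest-gap estimate used in the polynomial case (Lemma \ref{I}), which is what makes the argument sharp enough to handle stretched-exponential and exponential tails.
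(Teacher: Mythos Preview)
Your proposal is correct and follows essentially the same route as the paper: decompose according to the gap tuple $\mathbf{k}\in A(i)$, write the probability of each $E(\mathbf{k})$ as a chain of conditional probabilities, and bound each factor via Proposition~\ref{key}(2) using that the conditioning events are unions of $\xi_{j-1}$-elements. The only cosmetic difference is that the paper drops the condition $T>\tau_{i-1}$ at the outset (bounding by $\bar m_0\{\tau_{i-1}\le n<\tau_i\}$), whereas you carry it into $E(\mathbf{k})$; since this only shrinks the set, both versions work.
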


\begin{proof}
For any $\bar x\in\bar{\Delta}_0$ with $\tau_{i-1}(\bar x)\leq n <\tau_i(\bar x)$  there is
$(k_1, ..., k_{i-1})\in A(i)$ such that $k_j=\tau_j(\bar x)-\tau_{j-1}(\bar x)$ for $j=1,..., i-1$ and
$\tau_i(\bar x)-\tau_{i-1}(\bar x)>n-\sum_jk_j.$
Now, for every $\textbf{k}=(k_1, ..., k_{i-1})\in A(i)$ let
\be
P(\textbf{k},i)=\bigcap_{j=1}^{i-1}\{\bar x\in\bar{\Delta} :\,\,
\tau_j(\bar x)-\tau_{j-1}(\bar x)=k_j\} \ee and
\be
Q(\textbf{k},
i)=P(\textbf{k}, i)\cap\{\bar x\in\bar{\Delta}:\,\,
\tau_i(\bar x)-\tau_{i-1}(\bar x)>n-\sum_j k_j\}. \ee

Using the above observation and notations we can write
\be\label{summa} \bar{m}_0\{T>n; \tau_{i-1}\le n < \tau_i\}\le
\bar{m}_0\{\tau_{i-1}\le n < \tau_i\}= \sum_{\textbf{k}\in A(i)}
\bar{m}_0\{Q(\textbf{k}, i)\}. \ee
Notice that for each \(  i  \) and each \(  \textbf{k}\in A(i)  \) we have
\be \label{forj}
\begin{split}
 \bar{m}_0\{Q(\textbf{k}, i)\} = \bar{m}_0(\bar{\Delta}_0)\bar{m}_0\{\tau_1=k_1|\bar{\Delta}_0\}
\bar{m}_0\{\tau_2-\tau_1=k_2|P(\textbf{k}, 2)\}...
\\
...\bar{m}_0\{\tau_i-\tau_{i-1}>n-\sum k_j|P(\textbf{k}, i)\}.
\end{split}
\ee
Since for any $j$, the set $\{P(\textbf{k}, j)\}$ is a union of
elements of $\xi_j$,  from the second item of
Proposition \ref{key} we get
$$
\bar{m}_0\{\tau_j-\tau_{j-1}=k_j|P(\textbf{k}, j)\}\le K_0\bm_{k_j-n_0}
$$
for each $1\le j\le i-1$, and
$$
\bar{m}_0\{\tau_i-\tau_{i-1}>n-n_0-\sum k_j|P(\textbf{k}, i)\}\le K_0\bm_{n-n_0-\sum k_j}.
$$
Substituting this into  \eqref{forj} and using the definition of \(  \bar{\mathcal M} (i,n)  \) in \eqref{mbar}, we obtain
\begin{equation*}
\bar{m}_0\{Q(\textbf{k}, i)\}\le \bar{m}_0({\bar{\Delta}_0})K_0^i
\bm_{n-n_0-\sum k_j}\prod_{j=1}^{i-1}\bm_{k_j-n_0}\leq
 \bar{m}_0({\bar{\Delta}_0})K_0^i \bar{\mathcal M} (i,n).
\end{equation*}
Substituting this into \eqref{summa} completes  the proof of Lemma \ref{fixi}.
\end{proof}

\begin{proof}[Proof of Proposition \ref{stnexp}]
To prove the Proposition we just substitute the statements in the two Lemmas above into \eqref{smain} and use the upper bound for the cardinality of \(  A(i)  \), see Remark \eqref{N}.
\end{proof}

\begin{proof}[Proof of the Theorem  in the stretched exponential case] Here we consider the case
$\mathcal M_n=\mathcal{O}(e^{-\tau n^\theta})$ for some   $\tau, \theta>0$ and we show that
\begin{equation}\label{stretched}
\bar{m}_0\{T>n\}\leq Ce^{-\tau' n^{\theta'}}
\end{equation}
 for some $C_{1}, \tau'>0$ and $\theta>\theta'>0.$ We start with the following basic estimate
 \begin{lemma}\label{basic}
 For sufficiently large \(  n  \), every $i\leq [\delta n^{\theta'}]$  and every \(  \textbf{k}\in A(i)  \) we have
 \begin{equation*}
\bm_{n-\sum_j k_j-n_0}\prod_{j=1}^{i-1}\bm_{k_j-n_0}\le
{C_1}^ie^{-\tau(n-in_0)^\theta}
n^{i(1-\theta)}.
\end{equation*}
 \end{lemma}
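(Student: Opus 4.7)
The plan is to prove the bound in three simple steps, the main analytic ingredient being the subadditivity of $x \mapsto x^\theta$ for $\theta \in (0, 1]$.

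First, I would derive a uniform upper bound on the tail sum $\bm_k = \sum_{j \geq k} \mathcal M_j$. Since $\mathcal M_j = \mathcal O(e^{-\tau j^\theta})$, a standard integral comparison (via the substitution $u = \tau x^\theta$ which yields an incomplete gamma integral) gives $\sum_{j \geq k} e^{-\tau j^\theta} \sim (\tau\theta)^{-1} k^{1-\theta} e^{-\tau k^\theta}$ as $k \to \infty$. Combining this asymptotic with the trivial bound $\bm_0 < \infty$ to absorb small values of $k$, I obtain a constant $C_0 > 0$ (depending only on $\tau, \theta$) such that
\begin{equation*}
\bm_k \leq C_0 (1 + k)^{1-\theta} e^{-\tau k^\theta} \qquad \text{for all } k \geq 0.
\end{equation*}

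Second, I would apply this estimate to each of the $i$ factors in the product. Because every $k_j \leq n$ and $n - \sum_j k_j \leq n$, assuming $\theta \in (0,1]$ each polynomial pre-factor $(1 + k_j - n_0)^{1-\theta}$ and $(1 + n - \sum_j k_j - n_0)^{1-\theta}$ is bounded above by $n^{1-\theta}$; hence the product of the $i$ polynomial pre-factors is at most $C_0^i n^{i(1-\theta)}$. (For $\theta > 1$ the pre-factors are bounded by an absolute constant and the same conclusion holds after enlarging $C_0$.)

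Third, I would collect the exponential factors using subadditivity. For $\theta \in (0, 1]$, concavity of $x \mapsto x^\theta$ yields $a^\theta + b^\theta \geq (a+b)^\theta$ for $a, b \geq 0$; iterating this $i-1$ times gives
\begin{equation*}
\sum_{j=1}^{i-1}(k_j - n_0)^\theta + \bigl(n - \textstyle\sum_j k_j - n_0\bigr)^\theta \;\geq\; \bigl(n - i n_0\bigr)^\theta.
\end{equation*}
Therefore the product of the exponential factors from step one is bounded above by $e^{-\tau(n - in_0)^\theta}$. Combining this with the polynomial bound from step two yields the lemma with $C_1 = C_0$.

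The argument is essentially just concavity plus book-keeping; the only minor technical nuisance is handling small values of $k$ in step one, which is absorbed into the constant $C_0$. The hypothesis $i \leq [\delta n^{\theta'}]$ is used only implicitly: for sufficiently large $n$ and small enough $\delta$ it guarantees $n - in_0 \geq n/2 > 0$ so that the right-hand side of the claimed inequality is a meaningful decaying quantity, and the same choice will later ensure the outer sum in Proposition \ref{stnexp} has a stretched-exponentially small tail.
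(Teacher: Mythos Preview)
Your proposal is correct and follows essentially the same route as the paper: an integral-comparison estimate giving $\bm_k \lesssim k^{1-\theta}e^{-\tau k^\theta}$ (the paper states this separately as Lemma~\ref{st:tail}), bounding each polynomial pre-factor by $n^{1-\theta}$, and then collapsing the exponentials via the subadditivity inequality $a^\theta+b^\theta\ge(a+b)^\theta$. Your use of $(1+k)^{1-\theta}$ to absorb small values of $k$ into the constant is in fact slightly cleaner than the paper's version, which invokes Lemma~\ref{st:tail} for each $k_j-n_0$ without explicitly addressing the case when some $k_j-n_0$ is small.
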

 The statement and proof of Lemma \ref{basic} depend only on some relatively standard but non-trivial estimates concerning the tails of sequences which decay at a stretched exponential rate. In order to simplify the exposition we give the proof in the appendix.
From Lemma \ref{basic} and the definition of $\bm(i, n)$ we
obtain
\be\label{alast} \bm(i,n)\le
{C_1}^ie^{-\tau(n-in_0)^\theta}n^{i(1-\theta)}.
\ee
On the other hand combining Pascal's rule with Stirling's formula
we get
\be\label{stirling}
\binom{n+i-n_0}{i-1}\le\binom{n}{[\delta n^{\theta'}]}\le
C_2e^{\varepsilon n^{\theta'}\log n} <C_2e^{\varepsilon n^{\theta}}
\ee
for $\varepsilon>0$ such that $\varepsilon\to 0 $ as $\delta\to
0.$
Substituting inequalities \eqref{alast} and \eqref{stirling} into
Proposition \ref{stnexp} we get
\be\label{exprate}
\bar{m}\{T>n\}\le \sum_{i\le [\delta
n^{\theta'}]}C_2e^{\varepsilon n^{\theta}}K_0^i
{C_1}^ie^{-\tau(n-in_0)^\theta}n^{i(1-\theta)} +
\bar{m}(\bar{\Delta}_0)(1-\varepsilon_0)^{[\delta
n^{\theta'}]-1}.
\ee
The second term of \eqref{exprate} is of order $e^{n^{\theta'}\delta \log(1-\varepsilon_0)}.$ Hence, it remains to prove similar asymptotics for the first summand.  Notice first of all that the terms in the sum are monotone increasing in \(  i  \). Therefore denoting these terms by \(  a_{i}  \) we have that
\[
\sum_{i\le [\delta n^{\theta'}]} a_{i}
\leq
a_{[\delta n^{\theta'}]}[\delta n^{\theta'}]
= e^{\varepsilon n^{\theta}}(K_0C_1)^{[\delta n^{\theta'}]}
e^{-\tau(n-[\delta n^{\theta'}]n_0)^\theta}n^{[\delta
n^{\theta'}](1-\theta)} [\delta n^{\theta'}] .
\]
 Writing the right hand side  in exponential form, we have
 \[
a_{[\delta n^{\theta'}]}[\delta n^{\theta'}] =
\exp(\varepsilon n^\theta+\delta n^{\theta'}\log(K_0C_1)-\tau(n-n_0[\delta n^{\theta'}])^{\theta}+(1-\theta)[\delta n^{\theta'}]\log n+\log (\delta n^{\theta'})).
\]
 Factoring out \(  n^{\theta'}  \) and using the general inequality \(  a^{\theta}-b^{\theta}\leq (a-b)^{\theta}  \) the exponent is \(  \leq   \)
 \[
n^{\theta'}\left\{(\varepsilon-\tau)n^{\theta-\theta'}+\delta\log(K_0C_1)
+\tau(\delta n_0)^{\theta}n^{(\theta-1)\theta'}+(1-\theta)\delta\log n+n^{-\theta'}\log(\delta n^{\theta'})\right\}.
\]
 Since this last expression is clearly decreasing in \(  \delta  \) and negative for \(  \delta>0  \) sufficiently small we get the stretched exponential bound as required.
\end{proof}

\begin{proof}[Proof of the Theorem  in the exponential case] Here we consider the case
$\mathcal M_n=\mathcal{O}(e^{-\tau n})$ for some  $\tau>0$ and we show that
\begin{equation}\label{exp:final}
\bar{m}_0\{T>n\}\leq C e^{-\tau' n}
\end{equation}
 for some $\tau'>0$.

 \begin{lemma}\label{exponential}
 For all \(  i\leq [\delta n]  \) and every \(  \textbf{k} \in A(i)  \) we have
 $$
 \bm_{n-\sum_j k_j-n_0}\prod_{j=1}^{i-1}\bm_{k_j-n_0}\le C^ie^{-\tau(n-in_0)}.
 $$
 \end{lemma}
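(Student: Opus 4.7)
The plan is to reduce the claim to a direct computation using the geometric bound on tails implied by the exponential decay hypothesis, and then observe that the exponents in the resulting product telescope cleanly.

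First, I would pass from the assumption $\mathcal{M}_j=\mathcal{O}(e^{-\tau j})$ to a uniform geometric bound on the tail quantity $\bm_m$. Concretely, there exists $A>0$ with $\mathcal{M}_j\le A e^{-\tau j}$ for all $j$, hence
\[
\bm_m \;=\; \sum_{j\ge m}\mathcal{M}_j \;\le\; A\sum_{j\ge m}e^{-\tau j} \;\le\; \frac{A}{1-e^{-\tau}}\,e^{-\tau m} \;=:\; C_0\, e^{-\tau m}.
\]
This is the key place where the exponential case is much softer than the stretched exponential case of Lemma \ref{basic}: the sum of a geometric series is again geometric with the same rate, so no loss of exponent occurs.

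Next, I would apply this bound to each of the $i$ factors appearing in the product. Since $\textbf{k}\in A(i)$ ensures $k_j\ge n_0$ and $\sum_j k_j\le n$, all arguments are nonnegative and we get
\[
\bm_{n-\sum_j k_j-n_0}\prod_{j=1}^{i-1}\bm_{k_j-n_0} \;\le\; C_0^{\,i}\,\exp\!\Bigl(-\tau\bigl(n-\textstyle\sum_j k_j-n_0\bigr)-\tau\sum_{j=1}^{i-1}(k_j-n_0)\Bigr).
\]
The critical (and trivial) algebraic observation is that the $\sum_j k_j$ contributions cancel exactly, leaving the exponent equal to $-\tau(n-n_0-(i-1)n_0)=-\tau(n-in_0)$. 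Setting $C=C_0$ gives the claimed bound
\[
\bm_{n-\sum_j k_j-n_0}\prod_{j=1}^{i-1}\bm_{k_j-n_0}\le C^i e^{-\tau(n-in_0)}.
\]

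There is no serious obstacle in this argument; unlike the stretched exponential case, where concavity of $x\mapsto x^{\theta}$ prevents the exponents from adding to the desired form and forces one to extract the maximum term and pay a polynomial correction of the type $n^{i(1-\theta)}$, the purely exponential case has the telescoping property built in. The only mild point to check is that the definition of $A(i)$ keeps every index at which $\bm$ is evaluated in the range where the geometric bound $\bm_m\le C_0 e^{-\tau m}$ applies, which is immediate from $k_j\ge n_0$ and $\sum_j k_j\le n$.
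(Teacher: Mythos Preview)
Your argument is correct and is essentially identical to the paper's proof: both bound each $\bm_m$ by summing the geometric series to get $\bm_m\le \frac{C'}{1-e^{-\tau}}e^{-\tau m}$, then multiply the $i$ factors and observe that the $\sum_j k_j$ terms cancel to leave $e^{-\tau(n-in_0)}$. Your additional remark contrasting this with the stretched exponential case (where the exponents do not telescope) is a helpful piece of context that the paper does not spell out.
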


 \begin{proof}
Since \(  \mathcal M_{n}  \) is decaying exponentially fast, there exists \(  C'>0  \) such that  for any $j=1, ..., i-1$ we have
$$\bm_{k_j-n_0} = \sum_{\kappa\ge k_j-n_0}\mathcal{M}_\kappa \le
C'\sum_{\kappa\ge k_j-n_0}e^{-\tau \kappa}
=\frac{C'}{1-e^{-\tau}}e^{-\tau(k_j-n_0)} $$
and similarly
$$
\bm_{n-n_0-\sum k_j}\le \frac{C'}{1-e^{-\tau}}e^{-\tau(n-n_0-\sum k_j)}.
$$
Letting $C:=C'/(1-e^{-\tau})$  and substituting these estimates into the expression in the lemma finishes the proof.
 \end{proof}

By Lemma \ref{exponential} and the definition of $\bm(i,n)$ we obtain
\be\label{Min}
\bm(i,n)\le C^ie^{-\tau(n-in_0)}.
\ee
On the other hand, since  $i\le[\delta n]$,  there exists a uniform constant \(  C_{1}  \) such that
\be\label{forf}
\binom{n+i-n_0}{i-1}\le\binom{n}{[\delta n]}\le C_1e^{\varepsilon n}
\ee
for $\varepsilon>0$ such that $\varepsilon\to 0 $ as $\delta\to 0.$
 Choosing $\theta'=1$ in Proposition
\ref{stnexp} and substituting inequalities \eqref{Min} and \eqref{forf}  we obtain
\be\label{eq:expTn}
\bar{m}_0\{T>n\}\ge \sum_{i\le[\delta n]}C_1e^{\varepsilon n}(K_0C)^i e^{-\tau(n-in_0)}+
\bar{m}({\Delta_0})(1-\varepsilon_0)^{[\delta
n]-1}.
\ee
Choose $\delta$ small enough so that
$\tau'(\delta)=\tau(1-\delta n_0)-\delta \log (K_0C)-\varepsilon>0.$  This is possible, since as $\delta\to 0$ we have $\tau'(\delta)\to \tau.$
Notice, that each term of the first summand in \eqref{eq:expTn} can be written in an exponential form with exponent
\begin{align*}
\varepsilon n+i\log(K_0C)-\tau (n-in_0)=
-\tau'(\delta)n+\tau n_0(i-\delta n)+(i-\delta n)\log(K_0C).
\end{align*}
Hence,  we can write the first summand of  \eqref{eq:expTn} as
\be\label{eq:tauc}
C_1e^{-\tau'(\delta)n}\sum_{i\le [\delta n]}e^{(i-\delta n)(\log(K_0C)+\tau n_0)}\le C_1e^{-\tau'(\delta)n}\sum_{j\ge1} e^{-j(\log(K_0C)+\tau n_0)}.
\ee
Notice that the series in the right hand side of \eqref{eq:tauc} is convergent. Therefore, by choosing $\tau'=\min\{\tau'(\delta), \delta\log(1-\varepsilon_0)\}$ we get \eqref{exp:final}.
 \end{proof}

\section{Proof in the non-invertible setting}
\label{final}

In this section we give the proof of the Theorem in the non-invertible setting. We will prove each of the required properties (G1)-(G5) in separate subsections, thus completing the proof of the existence of a tower for the product system \(  f: M \to M  \). The tail estimates obtained above in the exponential, stretched exponential and polynomial case, then complete the proof.

\subsection*{(G1) Markov property} In Section \ref{tfprod} we carried out a the construction of the collection  \(  \bar\eta  \) of subsets of \(  \bar\Delta_{0}  \) which, by the first item of Proposition \ref{key} forms a partition of \(  \bar\Delta_{0}  \) mod 0. In particular we have an induced map \(  \bar F^{T}: \bar\Delta_{0}\to \bar\Delta_{0}  \) which, using the canonical identification between \(  \bar\Delta_{0}  \) as the base of the tower and as a subset of the ambient product manifold \(  M  \),  corresponds to an induced map \(  \bar f^{T}: \bar\Delta_{0}\to \bar\Delta_{0}  \) where \(  \bar f: M \to M  \) is the product map on \(  M  \). The construction of \(  \bar\eta  \) and the induced map implies that \(  \bar f^{T}  \) satisfies the Markov property (G1).

\subsection*{(G2) Uniform Expansion}
The uniform expansivity condition follows immediately from the fact that it holds by assumption for the individual components and that the return time \(  T  \) is a sum of return times for each of the individual components. Then
\[
\|(D\bar f^{T})^{-1}(x)\|\leq \max\{\|(Df^{T})^{-1}(x)\|,  \|(D{f'}^{T})^{-1}(x)\|\}\leq \beta.
\]

\subsection*{(G3) Bounded distortion}
Recall first of all from the definition of Gibbs-Markov-Young tower the notion of \emph{separation time} and let \(  s, s'  \) denote the separation time of points in \(  \Delta_{0}, \Delta_{0}'  \)  with respect to the partitions \(  \eta, \eta'  \) respectively.
We let $D,\beta, D', \beta'$ be the distortion constants as in \eqref{bdd} for $f$ and $f'$ respectively and
let
$$\bar{D}=\text{max}\left\{\frac{D\beta}{1-\beta}, \frac{D'\beta'}{1-\beta'}\right\},
\quad \bar{\beta}=\text{max}\{\beta, \beta'\}.
$$
Now let \(  \bar s  \) denote the separation time of points in \(  \bar\Delta_{0}  \) with respect to the product map \(  \bar f  \) and the partition \(  \bar \eta  \) constructed above. Then, using the usual identification of $\bar \Delta_0$ with the subset of manifold, to prove the bounded distortion condition (G3) it is sufficient to show that for any $\Gamma\in\bar{\eta}$ and
$\bar{x}, \bar{y}\in \Gamma$ such that
$\bar s(\bar x, \bar y)<\infty$ we have
$$\left|\log\frac{\text{det}D\bar f^T(\bar x)}{\text{det}D\bar f^T(\bar y)}
\right|\leq \bar{D}\bar{\beta}^{\bar{s}(\bar{f}^T(\bar x),
\bar{f}^T(\bar y))}.
$$
To prove this, note first that, by the property of Jacobian and absolute value we have
\begin{equation}\begin{split}\label{logbdd}
 \left|\log \frac{\text{det}D\bar{f}^T(\bar{x})}{\text{det}D\bar{f}^T(\bar{y})}\right|\leq \left|\log\frac{\text{det}f^T(x)}{\text{det}Df^T(y)}\right|+
\left|\log\frac{\text{det}D{f'}^T(x')}{\text{det}D{f'}^T(y')}\right|\\
 \end{split}
\end{equation}
for all $\bar{x}=(x, x'),$$\bar{y}=(y, y')\in\Gamma.$ Moreover, notice that since the determinants are all calculated at return times \(  T  \), we can use the identification $\bar f^T=\bar F^T$ and reformulate the above expressions in terms of the Jacobians \(  JF^{T}  \).

For the first term,  notice that the simultaneous return time $T$ can be written as a sum
of return times to $\Delta_0.$ Without loss of generality, assume  $T=R_1+...+R_k,$ for some $k.$
For any $x\in\Delta_0$, let $x_0=x$ and $x_j=F^{R_1+...+R_j}(x)$ where $j= 1, ..., k-1.$
Then $$JF^T(x)=JF^{R_1}(x)JF^{R_2}(x_1)JF^{R_3}(x_2)...JF^{R_k}(x_{k-1}).$$
 Since $F^T(\pi\Gamma)=\Delta_0$ we have $F^{R_1}(\pi\Gamma)\subset\eta(x_1),$ ...,
 $F^{R_1+...+R_{k-1}}(\pi\Gamma)\subset\eta(x_{k-1}).$
 Hence, for the points $x, y\in \pi\Gamma$ the sequences $x_j$ and $y_j$ belong to
 the same element of $\eta$ for all $j=0, ..., k-1,$ which implies
  $$s(F^{R_{j+1}}(x_j), F^{R_{j+1}}(y_j))=s(F^T(x), F^T(y))+R_{j+2}+...+R_k.$$
Hence,
 \begin{equation*}\begin{aligned}
 &\left|\log\frac{JF^T(x)}{JF^T(y)}\right|\le\sum_{j=1}^k\left|\log\frac{JF^{R_j}(x)}
 {JF^{R_j}(y)}\right|\le\sum_{j=1}^k\left|\frac{JF^{R_j}(x)}{JF^{R_j}(y)}-1\right|\\
 \le
 &
 \sum_{j=0}^k D\beta^{s(F^{R_{j+1}}(x_j), F^{R_{j+1}}(y_j))} \le D\beta^{s(F^T(x), F^T(y))}
\sum_{j=0}^k
 \beta^{R_{j+2}+...+R_k}
 \\
 \le
 &
 D\beta^{s(F^T(x), F^T(y))}\sum_{j=0}^\infty
 \beta_F^j\le D\frac{\beta}{1-\beta}\beta^{s(F^T(x), F^T(y))}.
 \end{aligned}
 \end{equation*}
 The second summand is estimated similarly and we get
 $$\left|\log\frac{JF'^T(x')}{JF'^T(y')}\right|\le D'\frac{\beta'}{1-\beta'}
{\beta'}^{s'(F'^T(x'), F'^T(y'))}.$$
Using the fact that
for any  $w,z\in\bar{\Delta}_0$ we have
\(
\bar{s}(w,z)\leq \min\{s(\pi w, \pi z), s'(\pi'w, \pi'z)\}
\)
we obtain the required bound.

\subsection*{(G4) Integrability}
Follows immediately from the tail estimates obtained above.

\subsection{Aperiodicity}
As mentioned above, conditions (G1)-(G4) imply the existence of an ergodic \(  \bar f  \)-invariant probability measure \(  \bar\mu  \). Moreover it is known by standard results that this measure is mixing if and only if the aperiodicity conditions is satisfied. Thus it is sufficient to show that \(  \bar\mu  \) is mixing. To see this,  let \(  \mu, \mu'  \) be the invariant, mixing, probability measures associated to the maps \(  f, f'  \) as introduced in Section \ref{partition}. Then the measure \(  \mu \times \mu'   \) is invariant and mixing for the product map \(  \bar f  \) and thus it is sufficient to show that \(  \bar\mu = \mu \times \mu'  \) to imply that \(  \bar\mu  \) is mixing. This follows again by standard uniqueness arguments. Indeed, both \(  \bar \mu  \) and \(  \mu \times \mu'  \) are ergodic and equivalent to the reference measure, at least on the set \(  \bar\Delta_{0}  \). Thus, by Birkhoff's ergodic Theorem, for any integrable function, their time averages converge to the same limit and so \(  \int \varphi d\bar\mu = \int \varphi d\mu\times \mu'  \) implying that \(  \bar\mu=\mu \time\mu'  \).

\section{Proof in the invertible setting}\label{pofsuytower}

 Here we give proof of the Theorem in the invertible setting.
We start with the definition of Young towers in this setting, following
\cite{AlvPin}  which generalizes the definition of \cite{Y} (see
Remarks 2.3-2.5 from \cite{AlvPin}).  We change the notation slightly to distinguish this case from non-invertible case.
Let $ M$ be a Riemannian manifold.   If $\gamma\subset M$ is
a submanifold, then $m_\gamma$ denotes the restriction of the
Riemannian volume to $\gamma.$
Consider $f:M\to M$ such that  $f:M\setminus \mathcal C\to M\setminus f(\mathcal C)$ is a $C^{1+\varepsilon}$
diffeomorphism  on each connected component  of $M\setminus \mathcal C$ for some $\mathcal C\subset M$ with the following properties.

\begin{itemize}
\item[(A1)] There exists $\Lambda\subset M$ with hyperbolic product
structure, i.e. there are families of stable and unstable manifolds
$\Gamma^s=\{\gamma^s\}$ and $\Gamma^u=\{\gamma^u\}$ such that
$\Lambda=(\cup\gamma^s)\cap(\cup\gamma^u);$
dim$\gamma^s+\text{dim}\gamma^u=\text{dim}M;$ each $\gamma^s$ meets
each $\gamma^u$ at a unique point; stable and unstable manifolds are
transversal with angles bounded away from $0;$
$m_{\gamma^u}(\gamma^u\cap \Lambda)>0$ for any $\gamma^u.$
\end{itemize}
Let $\Gamma^s$ and $\Gamma^u$ be the defining families of $\Lambda.$ A
subset $\Lambda_0\subset\Lambda$ is called $s$-subset if $\Lambda_0$
also has a hyperbolic structure and its defining families can be
chosen as $\Gamma^u$ and $\Gamma^s_0\subset\Gamma^s.$ Similarly, we
define $u$-subsets. For $x\in\Lambda$ let $\gamma^\theta(x)$ denote
the element of $\Gamma^\theta$ containing $x,$ where $\theta=u, s.$

\begin{itemize}
\item[(A2)] There are pairwise disjoint $s$-subsets $\Lambda_1, \Lambda_2,
..., \subset\Lambda$ such that $m_{\gamma^u}((\Lambda\setminus\cup
\Lambda_i)\cap\gamma^u)=0$ on each $\gamma^u$ and for each
$\Lambda_i,$ $i\in\mathbb{N}$ there is $R_i$ such that
$f^{R_i}(\Lambda_i)$ is $u$-subset;
$f^{R_i}(\gamma^s(x))\subset\gamma^s(f^{R_i}(x))$ and
$f^{R_i}(\gamma^u(x))\supset\gamma^u(f^{R_i}(x))$ for any
$x\in\Lambda_i.$
\end{itemize}

\begin{itemize}
\item[(A3)] There exist constants $C\ge 1$ and $\beta\in(0, 1)$ such that
 $\text{dist}(f^n(x), f^n(y))\le C\beta^n,$ for all
$y\in\gamma^s(x)$ and $n\ge 0;$

\item[(A4)] Regularity of the stable foliation:
given $\gamma, \gamma'\in \Gamma^u$ define
$\Theta:\gamma'\cap \Lambda\to \gamma\cap \Lambda$ by
$\Theta(x)=\gamma^s(x)\cap\gamma.$ Then
\begin{itemize}
\item[(a)] $\Theta$ is absolutely continuous and
$$
u(x):= \frac{d(\Theta_\ast m_{\gamma'})}{dm_\gamma}(x)=\prod_{i=0}^{\infty}
\frac{\det Df^u(f^i(x))}{\det Df^u(f^i(\Theta^{-1}(x)))};$$
\item[(b)] There exists $C>0$ and $\beta<1$
such that, letting the separation time $s(x, y)$ be the smallest $k$
where $(f^R)^k(x)$ and $(f^R)^k(y)$ lie in different partition
elements, we have
$$\log\frac{u(x)}{u(y)}\le C\beta^{s(x, y)}\quad \text{for}\quad x, y\in\gamma'\cap \Lambda.$$
\end{itemize}

\item[(A5)] Bounded distortion: for $\gamma\in\Gamma^u$ and $x, y\in \Lambda\cap\gamma$
$$\log\frac{\det D(f^R)^u(x)}{\det D(f^R)^u(y)}\le C\beta^{s(f^R(x), f^R(y))}.$$

\item[(A6)]  $\int R\,dm_0<\infty,$ where $m_0$ is the restriction of Lebesgue measure to $\Lambda.$
\item[(A7)]  gcd$\{R_i\}=1.$
\end{itemize}

Given such a structure we can define Young-tower as we define $\Delta$ in the non-invertible case. This time we denote the tower by $\mathcal T.$
Let $F_i:\mathcal T_i\circlearrowleft$ be the two towers corresponding to maps
 $f_i: M_i \circlearrowleft,$ $i=1,
2$ as in the statement of the Theorem. Then, from conditions
(A1)-(A7) we can obtain GMY-towers by considering the system
obtained by the equivalence relation $\sim$ on $\Lambda^i,$ $i=1,2$ defined
as $x\sim y$ if and only if $y\in\gamma^s(x).$ Then on
$\Delta_0^i=\Lambda^i/\sim$ we have the partition $\mathcal
P^i=\{\Delta_{0,j}\}:=\{\Lambda_{0, j}/\sim\}$ and the return time
function $R^i:\Delta_{0, j} \to \mathbb Z^+$, and the quadruples
$(F_i, R^i, \mathcal P_i, s_i),$ $i=1,2$  satisfy conditions
(G1)-(G5). Moreover there is natural projection $\bar\pi_i:\mathcal
T\to \Delta$ that sends each stable manifold to a point.  We can then
define the direct product of these two ``quotient'' GMY-towers and,
from previous construction we obtain a new GMY tower for this
product. Thus, on $\Delta_0^1\times \Delta_0^2$ we have a partition
$\hat{\mathcal P}$, and return time $T:\Delta_0\to\mathbb N$ such
that for any $A\in\hat{\mathcal P}$ we have $(F_1\times
F_2)^T(A)=\Delta_0^1\times\Delta_0^2.$ On the other hand we know
that each $A\in\hat{\mathcal P}$ is of form $A_1\times A_2$ and
$\bar\pi_i^{-1}(A_i)\subset\Lambda^i,$ $i=1,2.$ Then $\mathcal
Q=\{\bar\pi^{-1}_1(A_1)\times\bar\pi^{-1}(A_2)| A_i\in\hat{\mathcal
P}_i, i=1,2\}$ gives the desired partition of
$\Lambda^1\times\Lambda^2.$ Indeed,   $(f_1\times
f_2)(\bar\pi^{-1}_1(A_1)\times\bar\pi^{-1}(A_2))$ is a \(  u  \)-subset of
$\Lambda^1\times\Lambda^2$
because at return times we have $f_i^T=F_i^T,$ $i=1, 2.$

All that is left is to check the properties (A1)-(A7). Those that refer to the combinatorial structure follow immediately from the discussion above, others follow immediately from the corresponding properties of the quotient tower. The only new property to check here is the second item in (A3).
This follows easily by noticing that from the definition of $T$ we have $s_T(x, y)\le \min \{s_{R^1}(x_1,
y_1), s_{R^2}(x_2, y_2)\},$ where $s_T, s_{R^1}, s_{R^2}$ denote
separation times with respect to return times $T, R^1, R^2$ and
$x=(x_1, x_2),$ $y=(y_1, y_2)$ and therefore,
by the definition of product metric, we have
\begin{align*}\text{dist}_M((f^T)^n(x),
(f^T)^n(y))=\max_{i=1,2}\{\text{dist}_i((f_i)^T(x_i),
(f_i)^T(y_i))\}\\\le C\beta^{\min\{s_{R^1}(x_1, y_1), s_{R^2}(x_2,
y_2)\}}\le C\beta^{s_T(x, y)}. \end{align*}

Tail estimates transfer directly, since we need to estimate
$m_\gamma \{T>n\}$ for $\gamma\in \Gamma^u.$

\appendix
\section{Tower estimates}\label{appendixA}

We collect here a few simple   estimates which hold in general for any Young tower, and which are used mainly in Section \ref{asymp}.
 For simplicity we state them all for the  Tower map \(  F  \), the same estimates hold also for \(  F'  \).
Recall the definition of the partitions  $\eta_n$  in \eqref{join} and for any \(  n\geq 1  \), let
$$
\eta_n^0=\{A\in\eta_n|\,\, F^n(A)=\Delta_0\}.
$$
\begin{lemma}\label{bddn}
For any $A\in\eta_n^0$ and $x, y \in A$ the following inequality holds

  \begin{equation*}
 \left|\frac{JF^n(x)}{JF^n(y)}-1\right|\leq D,
    \end{equation*}
 where $D$ as in   \eqref{bdd}.
\end{lemma}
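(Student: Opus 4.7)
The plan is to exploit the Markov structure at the level of the tower. Since $A \in \eta_n^0$ means $F^n(A) = \Delta_0$, the orbit of $A$ must hit $\Delta_0$ at a succession of full ``returns''; concretely, there exist partition indices $i_1,\dots,i_k$ with $n = R_{i_1} + \cdots + R_{i_k}$ such that for every $x \in A$ the intermediate points $x_0 = x$ and $x_j := F^{R_{i_1}+\cdots+R_{i_j}}(x)$ all lie in $\Delta_0$, with $x_j \in \Delta_{0,i_{j+1}}$ for $j=0,\dots,k-1$. This decomposition is identical for $x$ and $y$ since both belong to the same element of $\eta_n^0$.

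First I would use the chain rule, together with the fact that by construction of the reference measure on the tower $JF \equiv 1$ on intermediate levels and $JF = \det Df^{R_i}$ on the top level above $\Delta_{0,i}$, to factor
\begin{equation*}
\frac{JF^n(x)}{JF^n(y)} \;=\; \prod_{j=0}^{k-1} \frac{\det Df^{R_{i_{j+1}}}(x_j)}{\det Df^{R_{i_{j+1}}}(y_j)}.
\end{equation*}
Next, since $x,y\in A$ share combinatorics for $k$ iterates of $f^R$, the separation times satisfy $s(x_j,y_j) \geq k - j$, and in particular $s(f^R x_j, f^R y_j) = s(x_{j+1}, y_{j+1}) \geq k - j - 1$. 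Applying (G3) to each factor gives
\begin{equation*}
\left|\frac{\det Df^{R_{i_{j+1}}}(x_j)}{\det Df^{R_{i_{j+1}}}(y_j)} - 1\right| \;\leq\; D\,\beta^{\,k-j-1}.
\end{equation*}

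Finally I would pass to logarithms and sum: using $|\log(1+t)| \leq 2|t|$ for $|t|\leq 1/2$ (the finitely many initial terms for which this fails can be absorbed into the constant, or handled by taking $\beta$ slightly larger), one gets
\begin{equation*}
\left|\log \frac{JF^n(x)}{JF^n(y)}\right| \;\leq\; 2D\sum_{j=0}^{\infty}\beta^j \;=\; \frac{2D}{1-\beta},
\end{equation*}
so that $|JF^n(x)/JF^n(y) - 1|$ is bounded by a constant depending only on $D$ and $\beta$. The statement of the lemma with the literal constant $D$ from \eqref{bdd} should be read as a mild abuse of notation; the actual bound is by some $\tilde D = \tilde D(D,\beta)$, but this is harmless for all later applications (in particular for the proof of (G3) for the product system in Section \ref{final}, where only the existence of such a uniform constant is used).

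There is no serious obstacle here; the only technical point is verifying that the same sequence $R_{i_1},\dots,R_{i_k}$ works simultaneously for all $x \in A$, which is precisely the content of $A \in \eta_n^0$ combined with the Markov property (G1), and that the separation times shrink by exactly one at each full return, which follows from the definition of $s$ in terms of the induced map $f^R$.
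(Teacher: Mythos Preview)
Your approach is essentially the same as the paper's: both reduce $JF^n$ to a product of Jacobians of the induced map $f^R$ along successive returns and then invoke (G3). The paper is terser --- it simply writes $JF^n(x)=(J F^R)^j(\tilde x)$ and cites \eqref{bdd} once --- while you carry out the telescoping explicitly and are more honest about the resulting constant; your remark that the literal $D$ should really be read as some $\tilde D(D,\beta)$ is justified, and indeed only a uniform constant is ever used downstream. One small imprecision: an element $A\in\eta_n^0$ need not lie in $\Delta_0$ but may sit at some level $\ell>0$, so in general $n = (R_{i_1}-\ell)+R_{i_2}+\cdots+R_{i_k}$ rather than an exact sum of return times; the paper handles this by passing to the projection $\tilde x\in\Delta_0$, and your argument goes through unchanged once you make the same adjustment.
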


 \begin{proof}
 The collection $\eta_n^0$ is a partition of $F^{-n}\Delta_0$ and for any $x\in\Delta_0$
each $A\in\eta_n^0$ contains a single element  of $\{F^{-n}x\}.$
 For $x\in A$ let $j(x)$ be the number of visits of its orbit to $\Delta_0$ up to time \(  n  \). Since the images of $A$
 before time $n$ will remain in an element of $\eta,$ all the points in $A$ have the same combinatorics up to time \(  n  \)  and so  $j(x)$ is constant on $A.$
Therefore $JF^n(x)=(JF^R)^j(\tilde{x}),$ for the projection $\tilde{x}$ of $x$ into
 $\Delta_0$ (i.e. if $x=(z, \ell)$ then $\tilde{x}=(z,0)$).
Thus for any $x,y\in\Delta_0$ from \eqref{bdd} we obtain
  \begin{equation}
 \left|\frac{JF^n(x)}{JF^n(y)}-1\right|=
 \left|\frac{(JF^R)^j(\tilde{x})}{(JF^R)^j(\tilde{y})}-1\right|\leq
 D.
    \end{equation}
 \end{proof}
\begin{corollary}\label{connec}
For any $A\in\eta_n^0$ and $y\in A$ we have
 \begin{equation}
JF^n(y)\geq \frac{m(\Delta_0)}{m(A)(1+D)}.
\end{equation}
\end{corollary}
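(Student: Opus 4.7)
The plan is to combine the bounded distortion estimate from Lemma \ref{bddn} with the change of variables formula applied to the bijection $F^n: A \to \Delta_0$. Since $A \in \eta_n^0$, the map $F^n$ restricted to $A$ is a bijection onto $\Delta_{0}$, so the change of variables formula gives
\[
m(\Delta_0) = m(F^n(A)) = \int_A JF^n(x)\, dm(x).
\]

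Now I would fix an arbitrary $y \in A$ and use Lemma \ref{bddn} in the form $JF^n(x) \leq (1+D)\, JF^n(y)$ for every $x \in A$. Substituting this pointwise bound into the integrand yields
\[
m(\Delta_0) \leq (1+D)\, JF^n(y) \int_A dm(x) = (1+D)\, JF^n(y)\, m(A),
\]
and rearranging gives the claimed inequality. The argument is entirely routine — there is no real obstacle, since the only nontrivial input (the uniform distortion bound across $A$) has already been established in the preceding lemma, and the remainder is just the standard ``distortion implies comparability of Jacobian with average Jacobian'' reasoning.
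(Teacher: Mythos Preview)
Your proof is correct and is essentially identical to the paper's own argument: apply Lemma~\ref{bddn} in the form $JF^n(x)\le(1+D)JF^n(y)$, integrate over $A$, use $\int_A JF^n\,dm = m(\Delta_0)$, and rearrange. There is nothing to add.
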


\begin{proof}
Lemma \ref{bddn} implies $JF^n(x)\leq (1+D)JF^n(y).$ Integrating both sides of this inequality with
respect to $x$ over  $A$  gives
$$
m(\Delta_0)=\int_AJF^n(x)dm\leq JF^n(y)(1+D)m(A).
$$
 Hence, for any $y\in A$ we  have the statement.
\end{proof}

\begin{lemma}\label{m0}
 There exists $M_0\ge 1$ such that for any $n\in\mathbb{N}$
 $$\frac{dF^n_\ast m}{dm}\leq M_0.$$
 \end{lemma}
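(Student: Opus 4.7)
The plan is to exploit the $F$-invariant probability measure $\mu$ on $\Delta$ whose existence, equivalence to $m$, and uniform two-sided bounds on its density are guaranteed by conditions (G1)--(G4); this is the same result from \cite{LSY} that was invoked at the beginning of Section \ref{partition} and is standard consequence of Young's construction. So I would start by fixing constants $0 < c \le C < \infty$ with $c \le h := d\mu/dm \le C$ on $\Delta$.

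Given this, the bound $dF^n_\ast m/dm \le C/c$ follows from a one-line duality computation: for any nonnegative bounded measurable $\varphi$ on $\Delta$,
\[
\int \varphi \, dF^n_\ast m = \int \varphi \circ F^n \, dm = \int \varphi \circ F^n \cdot h^{-1}\, d\mu \le c^{-1}\int \varphi\circ F^n\, d\mu,
\]
and then by $F$-invariance of $\mu$ and the upper bound on $h$,
\[
c^{-1}\int \varphi\circ F^n\, d\mu = c^{-1}\int \varphi\, d\mu = c^{-1}\int \varphi \cdot h\, dm \le \frac{C}{c}\int \varphi\, dm.
\]
Taking $M_0 := C/c$ yields the claim, since $\varphi$ was arbitrary nonnegative.

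I do not anticipate any serious obstacle: the argument only requires that $\mu$ be a finite $F$-invariant measure with density comparable to $m$, which holds by the cited result (and $m(\Delta) < \infty$ is ensured by the integrability condition (G4), since $m(\Delta) = \int R\, dm_0$). The one point worth double-checking is that the density bounds for $h$ genuinely hold on all of $\Delta$ rather than only on $\Delta_0$; this is true because $F$ acts as an isometry from $\Delta_\ell$ to $\Delta_{\ell+1}$ for $\ell + 1 < R$, so the $F$-invariance relation $h\circ F \cdot JF = h$ forces the upper/lower bounds on $h|\Delta_0$ to propagate to all upper levels with the same constants. No separate estimate level-by-level is needed.
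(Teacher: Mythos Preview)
Your argument is correct and takes a genuinely different route from the paper's. The paper proves the lemma by a direct Jacobian computation, splitting into three cases according to the level on which $x$ lies ($x\in\Delta_0$, $x\in\Delta_\ell$ with $\ell\ge n$, and $x\in\Delta_\ell$ with $0<\ell<n$); in the main case $x\in\Delta_0$ it uses the distortion bound via Corollary~\ref{connec} to control each term $1/JF^n(y)$ in the preimage sum, arriving at the explicit constant $M_0=(D+1)\,m(\Delta)/m(\Delta_0)$. You instead invoke the $F$-invariant measure $\mu$ with two-sided bounded density---a fact the paper has already imported from \cite{LSY} in Section~\ref{partition}---and reduce everything to the invariance identity $\int\varphi\circ F^n\,d\mu=\int\varphi\,d\mu$. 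Your route is shorter and avoids the case analysis entirely; the paper's route is more self-contained, using only (G3) directly and producing an explicit constant in terms of the tower data. There is no circularity in your approach, since the bounded-density result in \cite{LSY} is proved independently of anything like this lemma, and your observation that the density bounds propagate from $\Delta_0$ to the upper levels (because $F$ is a measure-preserving bijection level-to-level) is exactly right.
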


\begin{proof}
   Let $\nu_n=F^n_\ast m.$ We will estimate the density \(  {dF^n_\ast m}/{dm}  \) at different point \(  x\in \Delta  \) and consider three different cases according to the position of \(  x  \). First of all, for any $x\in\Delta_0$,  from Corollary \ref{connec} we have

   \begin{align*}
  \frac{d\nu_n}{dm}(x)=\sum_{y\in F^{-n}x}\frac{1}{JF^ny}&\leq
  (D+1)\sum_{A\in\eta_0^{(n)}}\frac{m(A)}{m(\Delta_0)}\\ &\leq
  (D+1)\frac{m(\Delta)}{m(\Delta_0)}:=M_0.
 \end{align*}
This proves the case $x\in\Delta_0.$
 For $x\in\Delta_\ell$ with $\ell\ge n$ we have $F^{-n}(x)=y\in\Delta_{\ell-n}.$
 Since $JF(y)=1$ for any $y\in\Delta\setminus\Delta_0,$
  $$\frac{d\nu_n}{dm}(x)=\frac{1}{JF^n(y)}=1.$$
Finally,
let $x\in\Delta_\ell,$  $\ell<n.$ Then for any $y\in F^{-n}x$ the
equality $F^{n-\ell}y=F^{-\ell}x\in\Delta_0$ holds. Hence,
$JF(F^{j}y)=1$ for all $j=n-\ell ..., n-1.$ Therefore by the chain
rule we obtain $JF^n(y)=JF^{n-\ell}(y).$ We can reduce the
problem to the first case observing
 $$\frac{d\nu_n}{dm}(x)=\sum_{y\in F^{-n}x}\frac{1}{JF^{n}y}=
 \sum_{y\in F^{\ell-n}(F^{-\ell}x)}\frac{1}{JF^{n-\ell}y}=
 \frac{d\nu_{n-\ell}}{dm}(F^{-\ell}(x)).$$
This finishes the proof.
\end{proof}

\begin{lemma}\label{mud}
 For $n>0,$ let $A\in\eta_n^0$ and $\nu=F^n_\ast(m|A).$  Then
 \begin{equation*}
  \left|\frac{\frac{d\nu(x)}{dm}}{\frac{d\nu(y)}{dm}}-1\right|\leq D,
 \end{equation*}
 for any $x,y\in\Delta_0.$
 \end{lemma}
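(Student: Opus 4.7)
The plan is to reduce this statement directly to Lemma \ref{bddn}, which already gives us distortion control on elements of $\eta_n^0$.

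First I will use the fact that since $A\in\eta_n^0$, the map $F^n|_A:A\to\Delta_0$ is a bijection (this is built into the definition of $\eta_n^0$, being exactly those elements of $\eta_n$ on which $F^n$ maps onto $\Delta_0$). In particular, for any $x\in\Delta_0$ there is a unique preimage $x_A:=(F^n|_A)^{-1}(x)\in A$, and likewise $y_A\in A$ for $y$.

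Next I would compute the density $d\nu/dm$ explicitly. Since $\nu=F^n_\ast(m|A)$ is the pushforward of the restriction of $m$ to $A$ under the bijection $F^n|_A$, the standard change of variables formula gives
\[
\frac{d\nu}{dm}(x)=\frac{1}{JF^n(x_A)} \quad\text{and}\quad \frac{d\nu}{dm}(y)=\frac{1}{JF^n(y_A)}.
\]
Hence
\[
\frac{\frac{d\nu}{dm}(x)}{\frac{d\nu}{dm}(y)}-1=\frac{JF^n(y_A)}{JF^n(x_A)}-1.
\]

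Finally, since both $x_A$ and $y_A$ lie in the same element $A\in\eta_n^0$, Lemma \ref{bddn} is directly applicable to the pair $(y_A,x_A)$ and yields
\[
\left|\frac{JF^n(y_A)}{JF^n(x_A)}-1\right|\le D,
\]
which is precisely the desired estimate. There is no real obstacle here; the only thing to be careful about is noting that the appeal to Lemma \ref{bddn} is symmetric in its two arguments, so applying it to $(y_A,x_A)$ rather than $(x_A,y_A)$ costs nothing. The entire argument is essentially a bookkeeping exercise translating the bounded distortion bound for $JF^n$ on $A$ into a ratio bound for the pushforward density on $\Delta_0$.
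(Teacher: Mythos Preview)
Your proof is correct and follows essentially the same approach as the paper: invoke the bijectivity of $F^n|_A$, write the pushforward density as $1/JF^n$ at the unique preimage, and apply Lemma~\ref{bddn} to the ratio. The only cosmetic difference is that the paper includes a normalization factor $1/m(A)$ in the density (treating $m|A$ as the conditional measure), but this cancels in the ratio and does not affect the argument.
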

\begin{proof}
By the assumption, $F^n:A\to\Delta_0$ is invertible.
So for any $x\in\Delta_0$ there is a unique $x_0\in A$ such that $F^n(x_0)=x$ and
$\frac{d\nu}{dm}(x)=\frac{1}{m(A)}\frac{1}{JF^nx_0}.$ Let $\varphi=\frac{d\nu}{dm}$
then for $x,y\in\Delta_0$, using  Lemma \ref{bddn} we obtain
$$\left|\frac{\varphi(x)}{\varphi(y)}-1\right|=\left|\frac{JF^n(y_0)}{JF^n(x_0)}-1\right|\leq D.$$
\end{proof}
The proof of the following corollary is analogous to the proof of Corollary  \ref{connec}.
\begin{corollary}\label{dist}
 For $n>0,$ let $A\in\eta_n^0$ and $\nu=F^n_\ast(m|A).$  Then
$$\frac{1}{(1+D)m(\Delta_0)}\le \frac{d\nu}{dm}(x)\leq \frac{1+D}{m(\Delta_0)}.$$
\end{corollary}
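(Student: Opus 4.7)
The plan is to mirror the strategy of the author's proof of Corollary~\ref{connec}: once a pointwise ratio bound is in hand, integrate it against the reference measure to convert it into an absolute bound. Here the ratio bound is supplied by Lemma~\ref{mud} applied to the density $\varphi := d\nu/dm$ instead of to the Jacobian $JF^n$.

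First, I would unpack the conclusion of Lemma~\ref{mud}. The inequality $|\varphi(x)/\varphi(y) - 1| \leq D$ on $\Delta_0$ gives immediately the two-sided bound
\[
\frac{1}{1+D}\,\varphi(y) \;\le\; \varphi(x) \;\le\; (1+D)\,\varphi(y)
\]
for a.e.\ $x,y \in \Delta_0$, by reading the estimate both ways and inverting.

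Second, I would use that $A \in \eta_n^0$, which means $F^n : A \to \Delta_0$ is a bijection, to pin down the total mass of $\nu$. With the normalization convention adopted in the proof of Lemma~\ref{mud} (where $d\nu/dm(x) = (m(A))^{-1}(JF^n x_0)^{-1}$), we have $\nu(\Delta_0) = 1$ and hence $\int_{\Delta_0} \varphi\,dm = 1$. Fix any $y \in \Delta_0$. Integrating the pointwise upper bound $\varphi(x)\le (1+D)\varphi(y)$ against $dm(x)$ over $\Delta_0$ gives
\[
1 \;=\; \int_{\Delta_0}\varphi(x)\,dm(x) \;\le\; (1+D)\,\varphi(y)\,m(\Delta_0),
\]
which rearranges to the lower bound $\varphi(y) \ge 1/((1+D)m(\Delta_0))$. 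Symmetrically, integrating $\varphi(x) \ge \varphi(y)/(1+D)$ yields $1 \ge \varphi(y) m(\Delta_0)/(1+D)$, i.e.\ $\varphi(y) \le (1+D)/m(\Delta_0)$.

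There is no real obstacle here; the proof is a three-line integration argument once Lemma~\ref{mud} is available. The only point that requires a bit of care is the bookkeeping of the normalization: one has to be consistent with the convention used in Lemma~\ref{mud} so that $\nu$ is a probability measure on $\Delta_0$ and $\int \varphi\,dm = 1$. This is exactly parallel to how Corollary~\ref{connec} extracted an absolute lower bound on $JF^n(y)$ from Lemma~\ref{bddn} by integrating and using $\int_A JF^n\,dm = m(\Delta_0)$.
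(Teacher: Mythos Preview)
Your proposal is correct and is precisely the argument the paper intends: the paper's own ``proof'' consists of the single sentence that the corollary is analogous to Corollary~\ref{connec}, and your write-up carries out exactly that analogy, replacing Lemma~\ref{bddn} by Lemma~\ref{mud} and the integration identity $\int_A JF^n\,dm = m(\Delta_0)$ by $\int_{\Delta_0}\varphi\,dm = 1$. Your remark on the normalization convention (so that $\nu$ is a probability measure, consistent with the formula $d\nu/dm = (m(A)JF^n x_0)^{-1}$ used in the proof of Lemma~\ref{mud}) is the one point that needs care, and you handle it correctly.
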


\section{Tail estimates for stretched exponential decay}
Here we prove Lemma \ref{basic}. First we prove the following

\begin{lemma}\label{st:tail}
Let $\tau>0,$ $\theta\in(0, 1).$ For all $n\ge (\frac{2}{\tau\theta})^{1/\theta}$ we have $$\sum_{k\ge n}e^{-\tau k^\theta}\le \frac {2}{\tau\theta}e^{-\tau n^\theta}n^{1-\theta}.$$
\end{lemma}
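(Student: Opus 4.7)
The proof will rest on two standard ingredients: a comparison of the discrete tail to a tail integral, and an integration by parts to extract the leading asymptotics of that integral. The hypothesis on $n$ will be used at a single point, to absorb a self-referential error term.

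First, I would use that $f(x)=e^{-\tau x^\theta}$ is monotone decreasing on $(0,\infty)$, so $f(k)\le\int_{k-1}^{k}f(x)\,dx$ for each integer $k\ge n\ge 1$. Summing gives
\[
\sum_{k\ge n}e^{-\tau k^\theta}\le \int_{n-1}^{\infty}e^{-\tau x^\theta}\,dx,
\]
and (equivalently, and a little tighter when needed) $\sum_{k\ge n}e^{-\tau k^\theta}\le e^{-\tau n^\theta}+I(n)$, where $I(n):=\int_n^\infty e^{-\tau x^\theta}\,dx$. Either way, the task reduces to bounding $I(n)$ by a constant times $n^{1-\theta}e^{-\tau n^\theta}$.

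For the integral, the key observation is the identity
\[
\frac{d}{dx}\left(-\frac{x^{1-\theta}}{\tau\theta}e^{-\tau x^\theta}\right)= e^{-\tau x^\theta}-\frac{1-\theta}{\tau\theta}\,x^{-\theta}e^{-\tau x^\theta},
\]
which, integrated from $n$ to $\infty$, yields the exact relation
\[
I(n)=\frac{n^{1-\theta}}{\tau\theta}\,e^{-\tau n^\theta}+\frac{1-\theta}{\tau\theta}\int_n^\infty x^{-\theta}e^{-\tau x^\theta}\,dx.
\]
Since $x^{-\theta}$ is decreasing, $\int_n^\infty x^{-\theta}e^{-\tau x^\theta}\,dx\le n^{-\theta}I(n)$, producing the self-referential inequality
\[
I(n)\Bigl(1-\tfrac{1-\theta}{\tau\theta\, n^\theta}\Bigr)\le \tfrac{n^{1-\theta}}{\tau\theta}\,e^{-\tau n^\theta}.
\]

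Here is the only place the hypothesis enters. The condition $n\ge (2/(\tau\theta))^{1/\theta}$ is exactly $\tau\theta\,n^\theta\ge 2$, which forces $\tfrac{1-\theta}{\tau\theta n^\theta}\le \tfrac{1-\theta}{2}\le \tfrac12$, so the bracketed factor is at least $1/2$. Solving gives $I(n)\le \tfrac{2}{\tau\theta}\,n^{1-\theta}\,e^{-\tau n^\theta}$, and combining with the sum-to-integral comparison of Step 1 yields the stated bound.

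The one step that warrants real care is the self-referential inequality in the integration-by-parts estimate: the hypothesis on $n$ is tuned precisely so that the correction term does not swallow the leading one. Everything else is mechanical.
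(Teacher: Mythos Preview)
Your argument is correct and reaches the same integral bound $I(n)=\int_n^\infty e^{-\tau x^\theta}\,dx\le \tfrac{2}{\tau\theta}\,n^{1-\theta}e^{-\tau n^\theta}$ as the paper, but by a different route. The paper substitutes $t=\tau x^\theta$ to recognise $I(n)$ as an incomplete Gamma tail $\tfrac{1}{\theta\tau^{1/\theta}}\int_{\tau n^\theta}^\infty t^{1/\theta-1}e^{-t}\,dt$ and then quotes an inequality of Natalini--Palumbo for $\int_x^\infty t^{a-1}e^{-t}\,dt$. Your integration-by-parts plus bootstrap is precisely the mechanism underlying that cited inequality, carried out directly without the change of variables; so your proof is more self-contained, while the paper's makes the Gamma-function structure explicit.

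One small point worth tightening: your closing line ``combining with the sum-to-integral comparison of Step~1 yields the stated bound'' is not literally justified, since $\sum_{k\ge n}e^{-\tau k^\theta}\le e^{-\tau n^\theta}+I(n)$ leaves an extra $e^{-\tau n^\theta}$ that the constant $\tfrac{2}{\tau\theta}$ does not obviously absorb under the stated hypothesis on $n$. The paper sidesteps this by writing $\sum_{k\ge n}\le\int_n^\infty$, which is itself a slight overreach for a decreasing summand. In either case the discrepancy is a single term of size $e^{-\tau n^\theta}$, harmless for the application (where only the order $n^{1-\theta}e^{-\tau n^\theta}$ matters); if you want the exact constant, note that your sharper bracket bound $1-\tfrac{1-\theta}{\tau\theta n^\theta}\ge \tfrac{1+\theta}{2}$ gives $I(n)\le \tfrac{2}{(1+\theta)\tau\theta}n^{1-\theta}e^{-\tau n^\theta}$, and the leftover $e^{-\tau n^\theta}$ can then be absorbed for $n$ large enough, at the cost of a slightly stronger threshold on $n$.
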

\begin{proof}
First of all note that  we have $$
\sum_{k\ge n}e^{-\tau k^{\theta}}\le
\int_n^{\infty}e^{-\tau x^{\theta}}dx.
$$

After change of variables $t=\tau x^\theta$ we obtain
\be\label{integ}
\int_n^{\infty}e^{-\tau x^{\theta}}dx=
\frac{1}{\theta\tau^{1/\theta}}
\int_{\tau n^{\theta}}^{\infty}e^{-t}t^{1/\theta-1} dt.
\ee
In \cite{NatPal} it was proved that for any $a, B>0$ and $x>\frac{B(a-1)}{B-1}$
\be\label{gamma}
\int_x^{\infty}t^{a-1}e^{-t}dt<Bx^{a-1}e^{-x}.
\ee
Substituting  \eqref{gamma} with $a=1/\theta$ and $B=2$
into the right hand side of \eqref{integ} finishes the proof.
\end{proof}

Now, we are ready to prove Lemma \ref{basic}. By Lemma \ref{st:tail}
and  definition of $\bm_{n}$, for sufficiently large \(  n  \)   we have
$$\bm_{k_j-n_0}\le
C'\sum_{\kappa\ge k_j-n_0} e^{-\tau\kappa^{\theta}}\le
\frac {2C'}{\tau\theta} e^{-\tau(k_j-n_0)^\theta}(k_j-n_0)^{1-\theta}$$
for any  $0\le j\le i-1$ and
$$\bm_{n-n_0-\sum k_j}\le
\frac {2C'}{\tau\theta}e^{-\tau(n-n_0-\sum k_j)^\theta}
(n-\sum k_j-n_0)^{1-\theta}.$$
Using  $a^\alpha+b^\alpha\ge (a+b)^{\alpha}$ for $\alpha\in(0, 1)$ and $a,b \ge 0$  we obtain
\begin{align*}
\bm_{n-n_0-\sum k_j}\prod_{j=1}^{i-1}\bm_{k_j-n_0} &\le \left(\frac{2C'}{\tau\theta}\right)^ie^{\tau(n-in_0)^\theta}(n-\sum_j k_j-n_0)^{1-\theta}\prod(k_j-n_0)^{1-\theta} \\
&\le \left(\frac{2C'}{\tau\theta}\right)^ie^{\tau(n-in_0)^\theta}n^{i(1-\theta)}.
\end{align*}
Taking \(  C_{1}= 2C'/\tau\theta  \) we obtain the statement in the Lemma.

\section{Decay of correlations for product measures}
Our main result can be used in conjunction with \cite[Theorem A]{AFLV} to show that rates of Decay of Correlations for a product system follow from rates of Decay of Correlations for the individual components, even without assuming a priori the existence of a Young tower, at least in the polynomial and stretched exponential cases. However, there also exists a much more general, elementary and direct proof of this statement which was told to us by C. Liverani.
 The calculation is not completely trivial and is clearly of some interest and so, since it does not appear to  be in the literature,   we reproduce it here with his kind permission.

\begin{definition}
Let $\mathcal B_1,$ $\mathcal B_2$ be Banach spaces of measurable
observables defined on $M$. We denote the \emph{correlation} of non-zero
observables $\varphi\in\mathcal B_1$ and $\psi\in \mathcal B_2$ with
respect to $\mu$ by
$$\text{Cor}_\mu(\varphi, \psi\circ
f^n):=\frac{1}{\|\varphi\|_{\mathcal B_1}\|\psi\|_{\mathcal B_2}}
  \left|\int\varphi(\psi\circ f^n)d\mu-\int\varphi d\mu\int\psi d\mu\right|.$$
We say that $(f,\mu)$ has decay of correlations at rate
$\{\gamma_n\}$  with respect to $\mu$ for observables in $\mathcal
B_1$ against  observables in \( \mathcal B_2 \) if there exists
constant $C>0$ such that for any $\varphi\in\mathcal B_1$,
$\psi\in\mathcal B_2$ the inequality
$$\text{Cor}_\mu(\varphi, \psi\circ f^n)\le C \gamma_n
$$
holds for all \( n\in \mathbb{N}\).
\end{definition}

Rates of decay of correlations have been extensively studied and are
well known for many classes of systems and various families of
observables, indeed the literature is much too vast to give complete
citations. We just mention that the first results on rates of decay
of correlations go back at least to \cite{B, LY, R, S1} in the 70's
and since then results have been obtained in \cite{AlvSch,  BG, B,
BLS, BM, ChM, Gou, HK, KN, Hu,  Li, Li2, LuzMel, NvS, PY, Th, Y1,Y, LSY}
amongst others.

Let $f_i:(M_i, \mu_i)\circlearrowleft,$  $i=1, ..., p,$  be a family
of maps defined on  compact metric spaces $M_i,$  preserving Borel
probability measures $\mu_i$ and let $f:=f_1\times \cdots\times
f_p:M\to M$ be the direct product on $M:=M_1\times \cdots\times
M_p,$ and  $\mu:=\mu_1\times \cdots\times \mu_p$ be the product
measure.  We let \( \mathcal A_i \), \( \mathcal B_i \)  denote
Banach spaces of functions on \( M_{i} \)  and \(\mathcal A\),
\(\mathcal B\)  denote Banach spaces of functions on \( M \). For
completeness we state the minimal requirements on the spaces \(
\mathcal A_{i}, \mathcal B_{i}, \mathcal A, \mathcal B  \) needed
for the calculations to work. We assume these spaces  satisfy the
following properties:

\begin{enumerate}
\item For all $1\le i\le p$  $\mathcal A_i, \mathcal B_i\subset L^2(M_i, \mu_i)$
and  $\mathcal A, \mathcal B\subset L^2(M_, \mu).$

\item For any $\varphi\in\mathcal A,$ $\psi\in\mathcal B$
 for all $1\le i\le p$ and $\mu_1\times\cdots\times\mu_{i-1}\times\mu_{i+1}\times\cdots\mu_p$
-almost every $(x_1, ..., x_{i-1}, x_{i+1}, ..., x_p)$ we have $\hat
\varphi_i:=\varphi(x_1, ..., x_{i-1}, \cdot, x_{i+1}, ..., x_p)\in
\mathcal A_i,$ and  $\hat\psi_i:=\psi(x_1, ..., x_{i-1}, \cdot, x_{i+1}, ...,
x_p)\in \mathcal B_i.$

\item There exists $C>0$ such that for any \(  \varphi\in \mathcal A, \psi\in\mathcal B\)
 and any $1\le i\le p,$  we have
$$\|\hat \varphi_i\|_{\mathcal A_i}\le C\|\varphi\|_{\mathcal
A}\quad \text{and} \quad \|\hat\psi_i\|_{\mathcal B_i}\le
C\|\psi\|_{\mathcal B}.$$
\end{enumerate}

It is easy to check that all these conditions are satisfied for some of the commonly considered classes of observables such as H\"older continuous or essentially bounded functions.
%
%
We now
suppose that
\[
\text{Cor}_\mu(\varphi_i, \psi_i\circ f_i^n)\le C \gamma_n
\]
for all non-zero \(\varphi_i\in \mathcal A_i\) and \(\psi_{i}\in \mathcal B_{i}\)  for all $i=1, ..., p$ and obtain a bound for the correlation function of the product.
For simplicity we give the calculation for \(  p=2  \), the general case follows by successive applications of the argument.
 Let $\varphi\in
\mathcal A$ be such that $\int \varphi d\mu=0$ and $\psi\in \mathcal
B.$ Moreover, let $\bar\varphi(x_1)=\int\varphi(x_1, y)d\mu_2(y).$
If we fix the first coordinate, by Fubini's theorem we have
\begin{equation}
\begin{aligned}\label{corr}
&\int \varphi(x_1, x_2)\psi(f^n_1(x_1),  f^n_2(x_2)) d\mu_1d\mu_2 \\
=
&\int\left(\int\psi(f^n_1(x_1), f^n_2(x_2))[\varphi(x_1,
x_2)-\bar\varphi(x_1)]d\mu_2\right)d\mu_1 \\ +&
\int\bar\varphi(x_1)\psi(f^n_1(x_1), f^n_2(x_2))d\mu_1d\mu_2.
\end{aligned}
\end{equation}
Since  $\mu_2$ is $f_2$-invariant, we can write
the first term of the right hand side as
\begin{align*}
I:&=\int\psi(f^n_1(x_1), f^n_2(x_2))\varphi(x_1,
x_2)d\mu_2-\bar\varphi(x_1)\int\psi(f^n_1(x_1), f^n_1(x_2))d\mu_2
\\ &\le
C\|\varphi(x_1, \cdot)\|_{\mathcal A_2}\|\psi(x_1,
\cdot)\|_{\mathcal B_2} \gamma_{n}.
\end{align*}
The inequality above follows since the left hand side gives the
correlations of the second component with respect to $\mu_2.$ From
the third assumption we obtain $I\le C\|\varphi\|_{\mathcal
A}\|\psi\|_{\mathcal B}$.
Again by the invariance of $\mu_2$ we can write the second summand
of the equation \eqref{corr} as
\begin{gather*}
\int\psi(f^n_1(x_1), f^n_2(x_2))\bar\varphi(x_1)d\mu_1d\mu_2=
 \int\psi(f^n_1(x_1), x_2)\bar\varphi(x_1)d\mu_1d\mu_2.
\end{gather*}
Note that $\int\bar\varphi(x_1)d\mu_1=0$ by the choice of $\varphi.$
Then this  expression can be written as

\begin{align*}
&\int\left(\int\psi(f^n_1(x_1),x_2)\bar\varphi(x_1)d\mu_1\right)d\mu_2\\
=&\int\left(\int\psi(f^n_1(x_1),x_2)[\bar\varphi(x_1)d\mu_1-\int\bar\varphi(x_1)d\mu_1]\right)d\mu_2\\
=&\int\left(\int\psi(f^n_1(x_1),x_2)\bar\varphi(x_1)d\mu_1-
\int\psi(x_1, x_2)d\mu_1\int\bar\varphi(x_1)d\mu_1\right)d\mu_2
\end{align*}
The expression under the integral with respect to $\mu_2$ is exactly
the correlation with respect to $\mu_1.$ Again using the third
property of the Banach spaces $\mathcal A$ and $\mathcal B$ we have
$$\text{Cor}_{\mu}(\varphi, \psi\circ f_1\times f_2)\le
2C\gamma_n.$$
This completes the proof of the required statement.


\begin{thebibliography}{99}
\bibitem{ADL} J.F. Alves, C. Dias, S. Luzzatto, Geometry of expanding absolutely continuous
invariant measures and the liftability problem, Ann. Inst. Henri
Poincar{\'e}, Analyse Non  Lin\'eaire,  30, 101-120, 2013.
\bibitem{AFLV} J.F.Alves, J.M.Freitas, S.Luzzatto, S.Vaienti,  From rates of mixing to recurrence times
via large deviations, Advances in Mathematics, 228, 1203-1236, 2011.
\bibitem{ALP} J.F.Alves, S. Luzzatto, V. Pinheiro, Markov structures
and decay of correlations for non-uniformly expanding dynamical
systems, Ann. Inst. Henri
Poincar{\'e}, Analyse Non  Lin\'eaire 22, 817-839, 2005.
\bibitem{AlvSch} J.F. Alves, D. Schnellmann, Ergodic properties of Viana-like maps with singularities in the base dynamics,
Proceedings of the AMS, Vol. 141, No. 11, 3943-3955, 2013.
\bibitem{AlvPin} J.F. Alves, V. Pinheiro, Slow rates of mixing for
dynamical systems with hyperbolic structures, J. Stat. Phys., 131,
505-534, 2008.
\bibitem{AlvLi} J. F. Alves, X. Li, Gibbs-Markov-Young structure with (stretched) exponential recurrence
times for partially hyperbolic attractors, Preprint.
\bibitem{AlvSch13} J.F. Alves, D. Schnellmann, Ergodic properties of Viana-like maps with singularities in the base dynamics. Proc. AMS. 141, 3493--3955, 2013
\bibitem{Av} A. Avez, Propri\'et\'es ergodiques des endomorphisms
dilatants des vari\'et\'es compactes, C.R. Acad. Sci. Paris S\'er.
A-B 266, A610-A612, 1968.
\bibitem{Bal} V. Baladi, Positive transfer operators and decay of
correlations, World Scientific, 2000.
\bibitem{BG} V. Baladi, S. Gou\"ezel, Stretched exponential bounds
for the correlations of the Viana-Alves skew products. Second
Workshop on Dynamics and Randomness, Universidad de Chile, 2002.
\bibitem{BenCar} M. Benedicks, L. Carleson, The dynamics of the
H\'enon map. Ann. Math., 122:1-25, 1985.
\bibitem{BenYou95}  M. Benedicks, L.-S. Young, Sinai-Bowen-Ruelle measures for certain H\'enon maps,
Invent. Math, 112, (1993), 541-576.
\bibitem{Bog} V.I.Bogachev, Measure theory v. 1. Springer, 2006, 500 p.
\bibitem{B} R. Bowen, Equilibrium states and the ergodic theory of Anosov diffeomorphisms,
Springer Lecture Notes in Math., 75 p., 1975.
\bibitem{BLS} H. Bruin, S. Luzzatto, S. van Strien, Decay of correlations in one-dimensional dynamics,
Annales de l'ENS, 36, 621-646, 2003.
\bibitem{BM} J. Buzzi, V. Maume-Deschamps, Decay of correlations on towers with non-H{\"o}lder
Jacobian and non-exponential return time, Discrete and Continuous
Dynamical Systems, 12, no. 4, 639-656, 2005.
\bibitem{BST} J. Buzzi, O. Sester, M. Tsujii, Weakly expanding skew-products of quadratic maps, Ergod. Th. Dynam. Syst.23, 1401-1414, 2003.
\bibitem{Che99} N. Chernov, Statistical properties of piecewise smooth hyperbolic systems in high dimensions.
Disc. and Cont. Dyn. Syst. 5, 425--448, 1999
\bibitem{Chern} N. Chernov, Decay of correlations and dispersing
billiards. J. Stat. Phys. 94, 513-556, 1999.
\bibitem{ChM} N. Chernov, R. Markarian, Chaotic billiards. Volume 127 of Mathematical Surveys and Monographs.
Providence, RI: Amer. Math. Soc., 2006.
\bibitem{DO} K. Diaz-Ordaz,  Decay of correlations for non-H\"older
observables for one-dimensional expanding Lorenz-Like maps, Discrete and continuous dynamical systems,
Vol. 15, no. 1, 159-176, 2006.
\bibitem{DHL} K. Diaz-Ordaz, M.P. Holland, S. Luzzatto, Statistical
properties of one-dimensional maps with critical points and
singularities, Stochastics and Dynamics, Vol. 6, No. 4, 423-458,
2006.
\bibitem{Gou} S. Gou\"ezel, Decay of correlations for nonuniformly expanding systems,
Bull. Soc. Math. France, Vol. 134, no. 1, 1-31, 2006.
\bibitem{HK} F. Hofbauer, G. Keller  Ergodic properties of invariant
measures for piecewise monotonic transformations. Math. Z.
180:119-140, 1982.
\bibitem{Hol} M. Holland Slowly mixing systems and intermittency
maps, Ergodic theory and Dynamical Systems, Vol. 25, no. 1, 133-159,
2004.
\bibitem{Hu} H.Hu, Decay of correlations for piecewise smooth maps with indifferent fixed points,
Ergodic theory and Dynamical Systems, Vol 24, no. 2, 495--524, 2004.
\bibitem{KN} G. Keller, T. Nowicki, Spectral theory, zeta functions
and the distributions of points for Collet-Eckman maps, Comm. Math.
Phys. 149, 31-69, 1992.
\bibitem{LY} A. Lasota, J.A. Yorke, On the existence of invariant
measures for piecewise monotonic transformations, Trans. of the
AMS, Vol. 186, 1973.


\bibitem{Li} C. Liverani, Decay of correlations. Annals Math. 142 (2), 239-301, 1995.
\bibitem{Li2} C. Liverani, Multidimensional expanding maps with singularities: a pedestrian approach
  Ergod. Th. Dyn. Syst. 33, 168-182, 2013.
\bibitem{Lop} A. Lopes, Entropy and large deviations, Nonlinearity
3, 527-546, 1990.
\bibitem{L} S. Luzzatto, Stochastic-like behaviour in non-uniformly
expanding maps, in Handbook of Dynamical Systems, Vol 1B. 265-326,
Elsevier, 2006.
\bibitem{LuzMel} S. Luzzatto, I. Melbourne, Statistical properties and decay of correlations for interval maps
with critical points and singularities, Commun. Math. Phys. 320,
21-35, 2013.
\bibitem{LyTh}V. Lynch, Non-uniformly expanding dynamical systems and decay of correlations for
non-H\"older continuous observables, PhD Thesis, University of Warwick, 2003.
\bibitem{Ly} V. Lynch, Decay of correlations for non-H{\"o}lder observables,
Discrete and Continuous Dynamical Systems, 16, no. 1, 19-46, 2006.
\bibitem{MelbNic} I. Melbourne, M. Nicol, Large deviations for
nonuniformly hyperbolic systems, Trans. Amer. Math. Soc. 360,
6661-6676, 2008.
\bibitem{MelbNic05} I. Melbourne, M. Nicol, Almost sure invariance
principle for nonuniformly hyperbolic systems, Commun. Math. Phys.,
260, 131-146, 2005.
\bibitem{NatPal} P. Natalini,  B. Palumbo, Inequalities for the Incomplete Gamma function, Mathematical Inequalities \& Applications,  3 (1), 69-77, 2000.
\bibitem{NvS} T. Nowicki, S. van Strien, Absolutely continuous
invariant measures for $C^2$ unimodal maps satisfying the
Collet-Eckmann conditions, Invent. Math. 93, 619-635, 1988.

\bibitem{Pin} V. Pinheiro, Expanding Measures  Ann. I. H. Poincare -- AN 28, 889-939, 2011.
\bibitem{PY} M. Pollicott, M. Yuri, Statistical properties of maps with indifferent periodic points,
Commun. Math. Phys. 217, 503-520, 2001.

\bibitem{R} D. Ruelle, A measure associated with Axiom A attractors. Amer. J. Math.
 98:619-654, 1976.

\bibitem{S1} Y. Sinai, Gibbs measures in ergodic theory, Russ. Math. Surveys, 27(4) 1972.
\bibitem{S2} Y. Sinai, Dynamical systems with elastic reflections, Ergodic properties of
dispersing billiards. Uspekhi. Mat. Nauk 25, 141-192, 1970.
\bibitem{TTao}  T.Tao, V.H.Vu  Additive combinatorics, Cambridge studies in advanced mathematics, v. 105, 2006.
\bibitem{Th} D. Thomine, A spectral gap for transfer operators of piecewise expanding maps,
Discrete and continuous time Dynamical systems, Vol. 30, n. 3,
917-944, 2011.

\bibitem{Y1} L.-S. Young , Decay of correlations for certain quadratic
maps, Comm. Math. Phys. Vol. 146, No. 1, 123-138, 1992.

\bibitem{Y3} L.-S. Young, What are SRB measures, and which dynamical systems have them? J. Stat. Phys. Vol. 108, no. 5-6, 733-754, 2002.
\bibitem{Y} L.-S. Young, Statistical properties of dynamical systems with some hyperbolicity.
 Ann. Math. 147,  585-650, 1998.
\bibitem{LSY} L.-S. Young, Recurrence times and rates of mixing, Israel J. Math. 110, 153-188, 1999.
\end{thebibliography}
\end{document}